\documentclass[preprint,12pt,nopreprintline]{elsarticle}

\usepackage{mathtools}
\usepackage{amsthm,bm}
\usepackage{amssymb}
\usepackage{enumitem}
\usepackage{graphicx}
\usepackage{booktabs}
\usepackage[utf8]{inputenc}
\usepackage{caption}

\newcommand{\R}{\mathbb{R}}

\newcommand{\ve}{\varepsilon}

\newcommand{\supp}{\operatorname{supp}}

\newcommand{\p}{\partial}
\newcommand{\na}{\nabla}

\newtheorem{thm}{Theorem}[section]
\newtheorem{lemma}[thm]{Lemma}

\newtheorem{pro}[thm]{Proposition}
\newtheorem{cor}[thm]{Corollary}

\newcommand{\x}{\mathbf{x}}
\newcommand{\y}{\mathbf{y}}
\newcommand{\e}{\mathbf{e}}
\newcommand{\n}{\mathbf{n}}
\newcommand{\ds}{\displaystyle}
\newcommand{\C}{\mathbb{C}}
\newcommand{\ri}{\mathrm{i}}

\theoremstyle{definition}
\newtheorem{rmk}[thm]{Remark}

\DeclareMathOperator{\Arg}{Arg}

\begin{document}

\begin{frontmatter}

\title{Cartesian perfectly matched layers for the Helmholtz equation
with variable propagation speed: well-posedness and exponential
convergence}

\author[udea]{C\'esar Ortiz-Saavedra}
\ead{ivan.ortiz@udea.edu.co}

\author[udea]{Mauricio A. Londo\~no-Arboleda\corref{cor1}}
\ead{alejandro.londono@udea.edu.co}

\cortext[cor1]{Corresponding author.}

\address[udea]{Instituto de Matem\'aticas, Facultad de Ciencias Exactas
y Naturales, Universidad de Antioquia, Calle 67 No.\ 53--108,
Medell\'in, Colombia}

\begin{abstract}
We analyse the Cartesian perfectly matched layer (PML) approximation of
the two-dimensional Helmholtz equation
$-c^2\Delta u-\omega^2u=f$ in which the propagation speed $c$ is
\emph{variable} inside the region of interest and equal to a constant
$c_\infty$ outside a disc $B_R$. Existing analyses of the Cartesian PML
assume a homogeneous, or piecewise homogeneous, background. We assume of
$c$ only a two-sided bound, so that discontinuous speeds --- layered
media and compact inclusions --- are covered; this is possible because
the speed enters the PML sesquilinear form only through its zeroth order
coefficient, the principal part being built from the stretching profiles
alone. Well-posedness of the physical problem at this regularity is
obtained from a Lippmann--Schwinger equation, uniqueness resting on
Rellich's lemma and on unique continuation for a bounded potential. We
also show that a Morawetz multiplier yields a sharp non-trapping
condition on $c$, namely $\na c(\x)\cdot(\x-\x_0)<c(\x)$ for some
$\x_0$, which reduces in the radial case to the classical Herglotz
condition $\frac{d}{d\varrho}(\varrho/c(\varrho))>0$: its failure
produces a trapped bicharacteristic. We then show that the variable
speed is invisible to the essential spectrum: after factoring out $c^2$, the two operators differ
by a compactly supported multiplication, so the spectral analysis of the
homogeneous Cartesian layer applies verbatim. This yields
well-posedness of the truncated problem on a rectangle $\Omega_\delta$
for \emph{every} $\omega>0$, with a stability constant independent of
the layer width, and convergence to the physical solution in $H^1(\Omega)$ at a rate
exponential in $\omega\gamma_M\delta_M$, where $\gamma_M$ and
$\delta_M$ are the absorption strength and the layer thickness. The convergence proof rests on two ingredients of
independent interest: uniform bounds for $H^{(1)}_0$ and $H^{(1)}_1$ on
the closed first quadrant, valid up to the boundary rays where the
stretching degenerates, and a geometric lemma showing that the imaginary
part of the complexified distance associated with the \emph{separable}
Cartesian stretching is non-negative throughout the layer and bounded
below away from the interface. Numerical experiments with a low-velocity inclusion
confirm the predicted exponential decay in both $\delta$ and $\omega$.
\end{abstract}

\begin{keyword}
Helmholtz equation \sep perfectly matched layer \sep Cartesian PML \sep
variable propagation speed \sep non-trapping \sep exponential convergence
\MSC[2020] 35J05 \sep 47A10 \sep 35P25 \sep 65N12
\end{keyword}

\end{frontmatter}

\section{Introduction}
\label{sec:intro}

Time-harmonic wave propagation in an unbounded medium must be truncated
before it can be computed. Among the available devices --- absorbing
boundary conditions~\cite{EngquistMajda1977,EngquistMajda1979,BaylissTurkel1980},
exact non-reflecting conditions~\cite{HagstromKeller1986,KellerGivoli1989},
boundary integral formulations and infinite elements --- the perfectly
matched layer introduced by B\'erenger~\cite{Berenger1994,Berenger1996}
has become the most widely used, because it is local, easy to
discretise, and, in exact arithmetic, reflectionless.

The modern way to read a PML is as a complex change of variables: the
solution is continued analytically into a complex neighbourhood of the
real domain, along which an outgoing wave decays exponentially, and the
computational domain is then truncated with any convenient boundary
condition. This point of view was put on a rigorous footing by Lassas
and Somersalo~\cite{LassasSomersalo1998,LassasSomersalo2001} for
spherical and, more generally, smooth convex layers, and by Bramble and
Pasciak~\cite{BramblePasciak2007,BramblePasciak2008} for finite spherical
layers in acoustics and electromagnetics. Quantitative and adaptive
variants, with constants explicit in the wave number, were obtained by
Chen and Liu~\cite{ChenLiu2005}; the influence of the profile
$\sigma$ and of the layer geometry has been studied by Collino and
Monk~\cite{CollinoMonk1998,CollinoMonkCurvi1998} and by Berm\'udez et
al.~\cite{Bermudez2007}, and PML has been used to compute resonances of
open systems~\cite{HeinHohageKoch2004,HeinHohageKochSchoberl2007,KimPasciak2009}.

Cartesian layers --- those obtained by stretching each Cartesian
coordinate separately --- are by far the most convenient in practice,
since they fit the rectangular meshes used in finite difference and
finite element codes, but they are markedly harder to analyse than
radial ones. The reason, identified by Kim and
Pasciak~\cite[\S1]{KimPasciak2010b}, is that the standard argument ---
coercivity up to a compact perturbation --- breaks down: for a Cartesian
layer the region in which the sesquilinear form fails to be coercive is
\emph{unbounded}, so the offending term cannot be absorbed as a lower
order perturbation on a bounded set. Kim and Pasciak resolved this for a
homogeneous background by first analysing the essential spectrum of the
Cartesian PML operator on the whole plane~\cite{KimPasciak2010a} and
then deducing well-posedness of the infinite-domain problem for every
real $k\ne0$~\cite[Lem.~4.6, Thm.~4.7]{KimPasciak2010b}, from which
stability and exponential convergence of the truncated problem follow.
Bramble and Pasciak~\cite{BramblePasciak2013} obtained, for the same
homogeneous background, inf--sup conditions on $\R^n$ and on the
truncated square with constants independent of the layer width and of
the absorption strength, in $n=2$ and $n=3$. Duan, Jiang and
Zheng~\cite{DuanJiangZheng2020} extended the Cartesian analysis to
two-layer media for Maxwell's equations, and Lu, Lai and
Wu~\cite{LuLaiWu2024} obtained, for a two-layer acoustic medium with a
flat interface, well-posedness of the uniaxial PML problem
unconditionally in the layer width. For a \emph{radial} layer the
high-frequency question has since been settled in considerable
generality by Galkowski, Lafontaine and
Spence~\cite{GalkowskiLafontaineSpence2023}, who prove that PML
truncation is exponentially accurate with constants explicit in the
frequency; the Cartesian counterpart of that statement remains
fragmentary, and we return to it in Remark~\ref{rmk:open}. Where a genuinely variable speed has been admitted for a Cartesian
layer, it has come with a structural hypothesis: smoothness
in~\cite{GalkowskiGongGrahamLafontaineSpence2024}, where $c\in
C^\infty$ is non-trapping and the truncation error is
$O(k^{-\infty})$, and periodicity together with a separation of scales
in the heterogeneous media of Jiang, Sun, Sun and
Ma~\cite{JiangSunSunMa2024}, where exponential convergence is recovered
through homogenisation. The semiclassical machinery behind much of this
progress is surveyed in~\cite{GalkowskiSpence2026}.

What every one of these analyses requires, then, is that the medium be
homogeneous in the region of interest, or else inhomogeneous in some
prescribed way: two layers, a periodic microstructure, a smooth
profile. The media that motivate domain truncation in seismology, in
medical imaging and in non-destructive testing satisfy none of these
descriptions. Their defining feature is a speed that jumps across
interfaces of unprescribed shape and number --- it is the reflection
and refraction produced by those jumps that one is trying to compute
--- and between the interfaces the speed need be no better than
bounded.

This paper analyses the Cartesian PML for exactly that class. Of the
propagation speed we assume only that it is measurable, that it obeys a
two-sided bound $0<c_{\min}\le c\le c_{\max}$, and that it is constant
outside a disc; nothing else. Piecewise constant speeds are therefore
admissible, and so is any bounded measurable profile between the
interfaces. That such a weak hypothesis suffices is not an accident of
the estimates but a structural feature of the Cartesian layer, and
Section~\ref{subsec:contributions} begins by explaining it.

\subsection{Contributions}
\label{subsec:contributions}

Throughout we consider $-c^2\Delta u-\omega^2u=f$ in $\R^2$ with the
Sommerfeld radiation condition, under the hypotheses on $c$ just
described, and with $f$ supported in $\overline{B_R}$. Our
contributions are the following.

\begin{enumerate}[label=(\roman*),leftmargin=2.2em]
\item \emph{A theory that does not see the regularity of the speed.}
The reason a two-sided bound suffices is structural: the propagation
speed enters the Cartesian PML sesquilinear form only through the
coefficient $J=c^{-2}d_1d_2$ of its zeroth order term, the principal
part being built from the stretching profiles $\sigma_1,\sigma_2$
alone, and those we choose ourselves. No estimate below ever
differentiates $c$. Well-posedness of the physical problem at this
regularity is Proposition~\ref{prop:physical}: uniqueness from
Rellich's lemma and unique continuation for a bounded potential,
existence from the Lippmann--Schwinger equation.

\item \emph{A sharp non-trapping condition, for a differentiable speed.}
The next contribution is stated in a smaller class than the rest of the
paper, and it is worth saying at once why, since the two hypotheses do
not sit at the same level. Everything in (i) concerns well-posedness,
and for that a two-sided bound is enough. Nothing in (i), however, says
how the solution behaves as $\omega\to\infty$, and that question is
geometric: it is decided by whether the rays of the medium escape to
infinity or return. Rays require a differentiable speed to be defined
at all. Proposition~\ref{prop:non_trapping} therefore assumes
$c\in C^{1,1}(\R^2)$ --- and is used in the proof of no other statement
in this paper.

Within that class we show that
\begin{equation}\label{eq:intro_H3}
  \na c(\x)\cdot(\x-\x_0) < c(\x)\qquad\text{for all }\x\in\R^2
\end{equation}
and some $\x_0\in\R^2$ forces every bicharacteristic of $c|\xi|$ to
escape. The Morawetz multiplier used is the flat one,
$(\x-\x_0)\cdot\xi$; what makes~\eqref{eq:intro_H3} sharp rather than
merely sufficient is that the two terms of $\dot M$ must be kept
together, so that the quantity which controls the sign is the
scale-invariant ratio $c^{-1}\na c\cdot(\x-\x_0)$. In the radial
case~\eqref{eq:intro_H3} is exactly the Herglotz condition
$\frac{d}{d\varrho}\bigl(\varrho/c(\varrho)\bigr)>0$ of geometrical
seismology, whose failure is the circular-orbit condition; see
Remark~\ref{rmk:sharpness} and Figure~\ref{fig:sharpness}. In
particular~\eqref{eq:intro_H3} admits low-velocity inclusions, which the
naive requirement $\na c\cdot(\x-\x_0)\le0$ excludes entirely.

The gap between the two classes is not an artefact of our proof. A
piecewise constant speed with a low-velocity inclusion traps a
positive-measure set of rays by total internal reflection, so no
non-trapping statement can hold at the regularity of (i); what is lost
there is not well-posedness but the control of the solution operator
at high frequency. Remark~\ref{rmk:mollify} makes this precise, and
Remark~\ref{rmk:open} records what it costs.

\item \emph{Well-posedness of the truncated Cartesian PML problem.}
The truncated operator $T_\delta$ on the rectangle $\Omega_\delta$ has
compact resolvent (Proposition~\ref{prop:compact_resolvent}), hence
discrete spectrum (Corollary~\ref{cor:discrete_spectrum}), and the
truncated problem is well posed for every $\omega$ outside an at most
countable set (Proposition~\ref{prop:truncated_wellposed}).

\item \emph{A localisation lemma, and well-posedness for every
frequency.} Lemma~\ref{lem:reduction} shows that
$-c^2\widetilde\Delta$ and $-c_\infty^2\widetilde\Delta$ have the same
essential spectrum. The point is that subtracting the two operators
produces a second order operator with compactly supported coefficients,
which is \emph{not} relatively compact; factoring out $c^2$ first moves
the compactly supported factor to the zeroth order term, where
Rellich--Kondrachov applies. The essential spectrum of the homogeneous
Cartesian layer, computed in~\cite{KimPasciak2010a}, is a sector
meeting $[0,\infty)$ only at the origin, so
$-c^2\widetilde\Delta-\omega^2$ is Fredholm of index zero for every
$\omega>0$; injectivity (Proposition~\ref{prop:injective}) then makes
it invertible, and a reflection argument transfers this to
$\Omega_\delta$ with constants independent of the layer width
(Proposition~\ref{prop:uniform}). The exceptional set of frequencies
therefore disappears.

\item \emph{Uniform Hankel bounds on the closed first quadrant.}
Lemma~\ref{lem:hankel} gives
$|H^{(1)}_0(\zeta)|\le\sqrt{2/(\pi|\zeta|)}\,e^{-\Im\zeta}$, and an
analogous bound for $H^{(1)}_1$, for $0\le\Arg\zeta\le\pi/2$. The bounds
are sharp and, crucially, uniform \emph{up to} the boundary rays: this
is what makes them usable at the interface $\p\Omega$, where the
stretching degenerates.

\item \emph{A geometric lemma for the separable stretching.}
Lemma~\ref{lem:geometry} shows that the complexified distance
$\rho(T(\x)-\y)$ has non-negative imaginary part throughout the layer,
and gives an explicit positive lower bound for it away from the
interface. This replaces the mode-by-mode continuation of a
Fourier--Bessel series, which is natural for a radial PML but
unavailable here: the Cartesian map does not preserve polar coordinates,
so $T(\x)$ corresponds to no single complex value of $r$
(Remark~\ref{rmk:no_fourier_bessel}).

\item \emph{Exponential convergence.}
Theorem~\ref{thm:convergence} and Corollary~\ref{cor:exp_rate} give
$\|u_\delta-u\|_{H^1(\Omega)}\le C(\omega,\delta)\,
e^{-c\omega\gamma_M\delta_M}$, in $H^1(\Omega)$ rather than
$L^2(\Omega)$; by Corollary~\ref{cor:every_omega} this holds for every
$\omega>0$ with a constant independent of $\delta$.
Section~\ref{sec:numerics} confirms the predicted behaviour numerically
on a medium with a $40\%$ low-velocity inclusion.
\end{enumerate}

\subsection{Relation to the homogeneous-background theory}
\label{subsec:comparison}

It is worth stating precisely how the present results relate to those of
Kim and Pasciak~\cite{KimPasciak2010a,KimPasciak2010b} for a homogeneous
background, since we lean on them heavily.

Their analysis has two halves. The first is spectral: the essential
spectrum of the Cartesian PML operator on the whole plane is a sector
meeting the real axis only at the origin~\cite{KimPasciak2010a}. The
second is analytic: an integral representation of the stretched solution
yields uniqueness, and hence stability for every real
$k\ne0$~\cite{KimPasciak2010b}. Both halves are proved for
$c\equiv c_\infty$.

Our contribution is to show that the first half survives a variable
speed unchanged, and that the second half survives it with the physical
uniqueness statement replaced by one valid for a variable, and merely
bounded, coefficient.
Lemma~\ref{lem:reduction} does the first: the essential spectrum cannot
see a perturbation supported in $\overline{B_R}$ once the operator is
factored so that the perturbation sits at zeroth order. The second is
Proposition~\ref{prop:injective}, where the representation formula is
applied on $\{|\x|>R\}$ --- a region in which the medium \emph{is}
homogeneous, so that~\cite[Thm.~4.3]{KimPasciak2010b} applies verbatim
--- and the gluing is closed by Rellich's lemma and unique
continuation. The reflection argument
of~\cite[Thm.~4.8]{KimPasciak2010a} then transfers stability to
$\Omega_\delta$, its only requirement being that the coefficients be
constant near $\p\Omega_\delta$, which~\ref{hyp:saturate} guarantees.
In both halves the variable speed is confined to a zeroth order term,
which is why the transfer costs no regularity: the smoothness
hypothesis of~\cite{KimPasciak2010a,KimPasciak2010b} on the background
is inherited by $\sigma_1,\sigma_2$, not by $c$.

What is \emph{not} obtained here is uniformity in $\omega$: the
threshold $\delta_0(\omega)$ and the constant $C_\star(\omega)$ of
Proposition~\ref{prop:uniform} are not shown to be bounded as
$\omega\to\infty$. This limitation is inherited from the qualitative
nature of the Fredholm argument rather than caused by the variable
speed. It should be said that the state of the art has moved here:
for a radial layer the frequency-explicit theory is now
complete~\cite{GalkowskiLafontaineSpence2023}, and for a Cartesian
layer with a constant speed Wang and Zheng~\cite{WangZheng2026} have
obtained an inf--sup constant of the optimal order $O(k^{-1})$, with a
threshold on $\sigma_0\delta$ that is independent of $k$, by estimating
the stretched Green kernel directly. What is still missing is the
combination of a Cartesian layer with a variable speed, which is
exactly the configuration treated here; Remark~\ref{rmk:open} explains
why a perturbation argument does not bridge the two. The
inhomogeneity of $c$ does, however, enter the size of the error, through
the Cauchy data $\mathcal N_R(u)$ of the physical solution on $\p B_R$;
the exponential rate itself is produced entirely in the homogeneous
exterior and is therefore the same as in the constant-speed case.

\subsection{Outline}

Section~\ref{sec:helmholtz} states the problem and the standing
hypotheses. Section~\ref{sec:preliminaries} introduces the Cartesian PML
formulation, the pointwise bounds on the stretching coefficients, the
well-posedness of the extended problem, the well-posedness of the
physical problem, and the non-trapping proposition. Section~\ref{sec:truncated} treats the truncated problem by elementary
means. Section~\ref{sec:allomega} removes the exceptional set of
frequencies and the dependence of the stability constant on the layer
width. Section~\ref{sec:convergence} contains the convergence theorem.
Section~\ref{sec:numerics} reports numerical experiments and
Section~\ref{sec:conclusions} draws conclusions.
\ref{app:Ltilde} records elementary operator-theoretic properties of
the extended-domain PML operator which are of independent interest.

\section{The variable-speed Helmholtz problem}
\label{sec:helmholtz}

Throughout, $c$ is a real-valued function on $\R^2$ subject to the two
standing hypotheses
\begin{enumerate}[label=(H\arabic*),ref=(H\arabic*)]
  \item\label{hyp:support} there exist constants $c_\infty>0$ and $R>0$
        such that $c(x,y)=c_\infty$ for $\sqrt{x^2+y^2}>R$;
  \item\label{hyp:positive} $c$ is measurable and there are constants
        $c_{\text{min}},c_{\text{max}}>0$ with
        $0<c_{\text{min}}\le c(x,y)\le c_{\text{max}}$; note that
        $c_{\text{min}}\le c_\infty\le c_{\text{max}}$,
\end{enumerate}
both understood almost everywhere. No regularity of $c$ beyond
\ref{hyp:positive} is assumed in the main line of the paper. In particular $c$
may be discontinuous, so that the layered media and compact inclusions
of applications, whose speeds are piecewise constant, are admissible.
The single exception is Proposition~\ref{prop:non_trapping}, which
characterises the non-trapping regime and is stated separately for
$c\in C^{1,1}(\R^2)$; it is used in the proof of no other statement, as
Remark~\ref{rmk:no_ntrap_needed} records. (\ref{app:Ltilde} contains,
in addition, an auxiliary construction that needs a Lipschitz speed;
this is stated there and is not used in the body of the paper.)

Given $\omega>0$ and $f\in L^2(\R^2)$ with compact support, we look for
the outgoing solution of
\begin{equation}\label{eq:helmholtz}
  \begin{cases}
    -c^2\Delta u-\omega^2u=f & \text{in }\R^2,\\[2pt]
    \ds\lim_{r\to\infty}r^{1/2}
      \left(c\,\frac{\p u}{\p r}-\ri\omega u\right)=0 .
  \end{cases}
\end{equation}
Since $c\equiv c_\infty$ near infinity by~\ref{hyp:support}, the
radiation condition is the classical Sommerfeld condition at wavenumber
$k=\omega/c_\infty$. Well-posedness of~\eqref{eq:helmholtz}
under~\ref{hyp:support}--\ref{hyp:positive} alone is the content of
Proposition~\ref{prop:physical}.

The absorbing layer will be built on a rectangle
$\Omega=(-a,a)\times(-b,b)$, introduced in
Section~\ref{sec:preliminaries}. Starting in
Section~\ref{sec:allomega} we shall additionally require
\begin{equation}\label{eq:hyp_R}
  R<\min\{a,b\},
\end{equation}
so that the closed disc $\overline{B_R}=\{\x\in\R^2:|\x|\le R\}$ is
compactly contained in $\Omega$: all the inhomogeneity of the medium is
then confined to $\Omega$, and $c\equiv c_\infty$ on a neighbourhood of
$\p\Omega$, throughout the layer and beyond. This is the standard
configuration for PML truncation, and it is what allows the analytic
continuation of Section~\ref{sec:convergence}. The results of
Sections~\ref{sec:preliminaries} and~\ref{sec:truncated} do not
use~\eqref{eq:hyp_R}.

\section{The Cartesian PML formulation and basic estimates}
\label{sec:preliminaries}

To establish the PML-Helmholtz equation within a rectangular domain, we define some fundamental functions. Let $\sigma:\mathbb{R}\rightarrow\mathbb{R}$ be defined as follows:
\begin{equation}\label{eq:PML_thickness}
	\sigma(t;l,\delta,\gamma)=\begin{cases}
		0 &\mbox{ for } |t|\leq l\\
		\gamma p\left(\frac{|t|-l}{\delta}\right) &\mbox{ for } l< |t|< l+\delta\\
		\gamma & \mbox{ for } |t|\geq l+\delta
	\end{cases}
\end{equation}
where $\delta$, $l$ and $\gamma$ are positive constants, and $p:(0,1)\rightarrow (0,1)$ is an increasing function, with $p(0^+)=0$ and $p(1^-)=1$, such that $\sigma\in C^\infty(\R)$. One such increasing function is $p(t)=1-\exp\left(\frac{t^2}{t^2-1}\right)$.
We note that the shape hypothesis is imposed here on $\sigma_j$ itself,
whereas in~\cite{KimPasciak2010a,KimPasciak2010b} it is imposed on the
averaged profile $\tilde\sigma_j$. The two conventions are related by
$\sigma_j=(t\tilde\sigma_j)'$, and what those references actually use is
that $d_j=1+\ri(t\tilde\sigma_j)'=1+\ri\sigma_j$ be constant outside a
compact set; this holds here, since $\sigma_j\equiv\gamma_j$ there, even
though $\tilde\sigma_j$ only tends to $\gamma_j$.

\subsection{Notation}
\begin{itemize}
    \item $\Omega=(-a,a)\times(-b,b)$
    \item $\Omega_\delta=(-a-\delta_a,a+\delta_a)\times(-b-\delta_b,b+\delta_b)$
    \item $\sigma_1(x)=\sigma(x\,;a,\delta^{\mathrm t}_a,\gamma_a)$ and $\sigma_2(y)=\sigma(y\,;b,\delta^{\mathrm t}_b,\gamma_b)$, where the \emph{transition widths} $\delta^{\mathrm t}_a,\delta^{\mathrm t}_b>0$ are fixed once and for all, independently of the layer widths $\delta_a,\delta_b$; we always assume $\delta^{\mathrm t}_a\le\delta_a$ and $\delta^{\mathrm t}_b\le\delta_b$, so that the absorption has reached its full strength $\gamma_a$, resp.\ $\gamma_b$, before the outer boundary is met
    \item $\displaystyle\tilde{\sigma}_j(t)=\frac{1}{t}\int_0^t\sigma_j(s)ds$
    \item $\sigma_M=\max\left\{\gamma_a,\gamma_b\right\}$.
    \item $U=\left\{z\in\C\ :\ \Re(z)>-\frac{1}{2\sigma_M} \right\}$
    \item For $z\in U$, $\tilde{d}_j^z(t)=1+z\tilde{\sigma}_j(t)$
    \item $\tilde{x}^z(x) = x \tilde d_1^z(x)$ and $\tilde{y}^z(y) = y \tilde d_2^z(y)$
    \item $T^z(x,y)=(\tilde{x}^z(x),\tilde{y}^z(y))$
    \item $\displaystyle d_1^z(x)=\frac{d\tilde x^z}{ dx}=1+z\sigma_1(x)$ and $\displaystyle d_2^z(y)=\frac{d\tilde y^z}{ dy}=1+z\sigma_2(y)$.
    \item $s_1^z(x)=\frac{1}{d_1^z(x)}$ and $s_2^z(y)=\frac{1}{d_2^z(y)}$
    \item $\tilde r^z=\|T^z(x_2,y_2)-T^z(x_1,y_1) \|_2$
    \item $J^z(x,y)=c(x,y)^{-2}d_1^z(x)d_2^z(y)$
    \item $\tilde \Delta^z=\frac{1}{d_1^z(x)}\p_x \left(\frac{1}{d^z_1(x)}\p_x \right)+\frac{1}{d^z_2(y)}\p_y \left(\frac{1}{d^z_2(y)}\p_y \right)$
    (When $z=\ri=\sqrt{-1}$, we omit the superscript $z$).
    \item $\displaystyle A_W(u,v)=\int_{W}\left(\frac{d_2(y)}{d_1(x)}\p_x u\p_x \bar{v}+\frac{d_1(x)}{d_2(y)}\p_y u\p_y \bar{v}\right)dxdy$
    \item $\displaystyle(u,v)_W=\int_{W}u\bar{v}dxdy$
    \item For $w\in\C$,
    \begin{equation}\label{eq:PMLbilinear}
        \displaystyle\mathcal{A}_{W,w}(u,v)=A_W(u,v)-w(Ju,v)_W.
    \end{equation}
 \end{itemize}

\textbf{Extended problem}. Given $f \in L^2(\R^2)$ and $w\in \C$, find $u \in H^1(\R^2)$ such that for any $v \in H^1(\R^2)$,
\begin{equation}\label{eq:PML_extended_problem}
\mathcal{A}_{\R^2,w}(u,v) = (Jf,v)_{\R^2}.
\end{equation}

\textbf{Truncated problem}. Given $f \in L^2(\Omega_\delta)$ and $w\in \C$, find $u \in H^1_0(\Omega_\delta)$ such that for any $v \in H^1_0(\Omega_\delta)$,
\begin{equation}\label{eq:PML_truncated_problem}
\mathcal{A}_{\Omega_\delta,w}(u,v) = (Jf,v)_{\Omega_\delta}.
\end{equation}

\begin{lemma}\label{lemma:bounds1}
	The following bounds are satisfied for any $(x,y) \in \R^2$.
\begin{enumerate}
    \item  $\ds \left|\frac{d_1(x)}{d_2(y)}\right|\leq\sqrt{\sigma_M^2+1} $ and $\ds  \left|\frac{d_2(y)}{d_1(x)}\right|\leq\sqrt{\sigma_M^2+1}$.
     \item $\ds |c^{-2}(x,y)d_1(x)d_2(y)|\leq c_\text{min}^{-2}\left(\sigma_M^2+1\right)$
    \item $\ds \Re\left(\frac{d_1(x)}{d_2(y)}\right)\geq\frac{1}{\sigma_M^2+1}>0$ and  $\ds \Re\left(\frac{d_2(y)}{d_1(x)}\right)\geq\frac{1}{\sigma_M^2+1}>0.$
    \item Besides, if $\gamma_a\leq\gamma_b= \sigma_M<1$ and $\Arg\left(1+\ri\gamma_a\right)\leq\theta\leq\Arg\left(1+\ri\gamma_b\right)$, then
    \begin{equation}
	\Re\left(e^{-\ri\theta}d_1(x)d_2(y)\right)\geq \frac{1-\sigma_M^2}{\sqrt{\sigma_M^2+1}}>0.
	\end{equation}
\end{enumerate}
\end{lemma}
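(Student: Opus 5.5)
The plan is to reduce everything to elementary complex arithmetic. At $z=\ri$ we have $d_1(x)=1+\ri s$ and $d_2(y)=1+\ri t$, where $s=\sigma_1(x)\in[0,\gamma_a]$ and $t=\sigma_2(y)\in[0,\gamma_b]$. Both ranges follow from \eqref{eq:PML_thickness}, since $p$ takes values in $(0,1)$. In particular $s,t\in[0,\sigma_M]$, and since $c$ enters only through the scalar factor $c^{-2}\le c_{\min}^{-2}$ given by \ref{hyp:positive}, each item becomes an inequality about two real parameters $s,t$ in a box.

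For item 1, I will compute $|d_1/d_2|^2=(1+s^2)/(1+t^2)$. The numerator is at most $1+\sigma_M^2$ and the denominator is at least $1$, which gives the bound; the reciprocal is symmetric. For item 2, I will use $|d_1d_2|=\sqrt{1+s^2}\sqrt{1+t^2}\le 1+\sigma_M^2$ together with $c^{-2}\le c_{\min}^{-2}$ a.e. For item 3, I will rationalise:
\[
\frac{d_1}{d_2}=\frac{(1+\ri s)(1-\ri t)}{1+t^2},
\qquad
\Re\frac{d_1}{d_2}=\frac{1+st}{1+t^2}\ge\frac{1}{1+\sigma_M^2},
\]
because $st\ge0$ and $t\le\sigma_M$. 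The other quotient follows by exchanging $s$ and $t$.

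Item 4 is the only step requiring care, and I would argue through arguments rather than expanding $e^{-\ri\theta}(1-st+\ri(s+t))$. Write $\alpha_a=\Arg(1+\ri\gamma_a)=\arctan\gamma_a$ and $\alpha_b=\arctan\gamma_b$. Since $\arctan$ is increasing, $\Arg d_1=\arctan s\in[0,\alpha_a]$ and $\Arg d_2=\arctan t\in[0,\alpha_b]$. All these arguments are small, because $\sigma_M<1$ forces $\alpha_a,\alpha_b<\pi/4$, so arguments add without wrap-around. Hence $\varphi:=\Arg(e^{-\ri\theta}d_1d_2)=\arctan s+\arctan t-\theta$. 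The hypothesis $\alpha_a\le\theta\le\alpha_b$ then gives $\varphi\ge -\theta\ge-\alpha_b$ and $\varphi\le\alpha_a+\alpha_b-\theta\le\alpha_b$. So $|\varphi|\le\alpha_b<\pi/2$, and therefore
\[
\Re\bigl(e^{-\ri\theta}d_1d_2\bigr)=|d_1d_2|\cos\varphi\ge 1\cdot\cos\alpha_b=\frac{1}{\sqrt{1+\sigma_M^2}}\ge\frac{1-\sigma_M^2}{\sqrt{1+\sigma_M^2}}>0,
\]
using $|d_1d_2|\ge1$ and $\gamma_b=\sigma_M<1$.

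The main obstacle is bookkeeping the angle window in item 4, that is, checking that both endpoints of the interval for $\varphi$ stay inside $[-\alpha_b,\alpha_b]$. The bound obtained this way is in fact slightly stronger than the one stated, which it implies directly.
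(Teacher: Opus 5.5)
Your proof is correct. For items 1--3 it is the paper's argument: the same moduli $\sqrt{(1+\sigma_1^2)/(1+\sigma_2^2)}$ and $c^{-2}\sqrt{(1+\sigma_1^2)(1+\sigma_2^2)}$, and the same rationalised real part $(1+\sigma_1\sigma_2)/(1+\sigma_2^2)$.

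For item 4 the routes differ. The paper writes $\theta=\arctan\gamma$ with $\gamma\in[\gamma_a,\gamma_b]$ and expands directly,
\[
\Re\bigl(e^{-\ri\theta}d_1d_2\bigr)=\frac{1+\gamma(\sigma_1+\sigma_2)-\sigma_1\sigma_2}{\sqrt{1+\gamma^2}} .
\]
It then discards $\gamma(\sigma_1+\sigma_2)$ and bounds $\sigma_1\sigma_2\le\sigma_M^2$. This is a one-line computation. However, it throws away exactly the term that compensates $-\sigma_1\sigma_2$, which is why it needs $\sigma_M<1$ to stay positive.

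Your argument-window bookkeeping keeps that compensation. It yields the constant $1/\sqrt{1+\sigma_M^2}$, which is sharp: it is attained at $\sigma_1=\sigma_2=0$, $\theta=\Arg(1+\ri\gamma_b)$. The same sharp constant also falls out of the paper's expansion once one notes $\gamma(\sigma_1+\sigma_2)-\sigma_1\sigma_2=\gamma\sigma_1+\sigma_2(\gamma-\sigma_1)\ge0$, using $\gamma\ge\gamma_a\ge\sigma_1$. That is the algebraic counterpart of your upper bound $\varphi\le\alpha_b$.

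Your argument also gives more than you claim. Since $\Re w=|w|\cos\varphi$ for any argument $\varphi$ of $w$, the wrap-around discussion is superfluous. Neither endpoint estimate in $-\alpha_b\le\varphi\le\alpha_b$ uses $\sigma_M<1$, and $\alpha_b=\arctan\gamma_b<\pi/2$ for every finite $\gamma_b$. Hence item 4, with your constant, holds without the restriction $\sigma_M<1$. Fed into Lemma~\ref{lm:well-possed}, it would improve the coercivity constant there to $C_2=\min\{1/(1+\sigma_M^2),\,c_{\text{max}}^{-2}/\sqrt{1+\sigma_M^2}\}$. It would also remove that restriction from the coercivity argument itself.
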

\begin{proof} By direct calculations we have
\begin{enumerate}
    \item
        $\ds \left|\frac{d_1(x)}{d_2(y)}\right|=\sqrt{\frac{\sigma _1^2(x)+1}{\sigma _2^2(y)+1}}\leq\sqrt{{\sigma_M^2+1}{}} $
        \item $\ds c_\text{max}^{-2}\le\left|c^{-2}(x,y)d_1(x)d_2(y)\right|
          =c^{-2}(x,y)\sqrt{\left(\sigma _1^2(x)+1\right)
            \left(\sigma _2^2(y)+1\right)}
          \leq c_\text{min}^{-2}\left(\sigma_M^2+1\right)$
\item $\ds \Re \left( \frac{d_1(x)}{d_2(y)} \right)=\frac{\sigma _1(x) \sigma _2(y)+1}{\sigma _2^2(y)+1}\geq\frac{1}{\sigma_M^2+1}$
    \item Let $\gamma$ be in $[\gamma_a,\gamma_b]$. Thus, with $\theta$ such that $\cos\theta=\frac{1}{\sqrt{1+\gamma^2}}$ and $\sin\theta=\frac{\gamma}{\sqrt{1+\gamma^2}}$, we have
    \begin{equation}
        \Re\left(e^{-\ri\theta} d_1(x)d_2(y)  \right)=\frac{1 +\gamma(\sigma_1(x)+\sigma_2(y))-\sigma _1(x)\sigma_2(y) }{\sqrt{\gamma ^2+1}}\geq\frac{1-\sigma_M^2}{\sqrt{\sigma_M^2+1}}
    \end{equation}
\end{enumerate}
\end{proof}

\begin{lemma}\label{lm:well-possed}
	If \ $0<\gamma_a\leq\gamma_b=\sigma_M<1$, then for $w=-e^{-\ri\theta}$, with $\Arg\left(1+\ri\gamma_a\right)\leq\theta\leq\Arg\left(1+\ri\gamma_b\right)$, the equation \eqref{eq:PML_extended_problem} defines a well-posed problem.
\end{lemma}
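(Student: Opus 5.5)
Well-posedness will follow from the Lax--Milgram lemma, by showing that $\mathcal{A}_{\R^2,w}$ with $w=-e^{-\ri\theta}$ is bounded and coercive on $H^1(\R^2)$ after multiplication by a suitable unimodular factor.

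\emph{Boundedness.} Lemma~\ref{lemma:bounds1}(1)--(2) give $|d_2/d_1|,|d_1/d_2|\le\sqrt{\sigma_M^2+1}$ and $|J|\le c_{\min}^{-2}(\sigma_M^2+1)$ pointwise. Hence $|\mathcal{A}_{\R^2,w}(u,v)|\le C\|u\|_{H^1}\|v\|_{H^1}$ with $C=\max\{\sqrt{\sigma_M^2+1},\,c_{\min}^{-2}(\sigma_M^2+1)\}$, since $|w|=1$. The right-hand side $v\mapsto(Jf,v)$ is bounded for the same reason.

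\emph{Coercivity.} Take $v=u$. Then $\mathcal{A}_{\R^2,w}(u,u)=A(u,u)+e^{-\ri\theta}(Ju,u)$. The second term equals $\int c^{-2}e^{-\ri\theta}d_1d_2|u|^2$, so Lemma~\ref{lemma:bounds1}(4) gives
\[
\Re\bigl(e^{-\ri\theta}(Ju,u)\bigr)\ge c_{\max}^{-2}\,\frac{1-\sigma_M^2}{\sqrt{\sigma_M^2+1}}\,\|u\|_{L^2}^2 .
\]
For the first term, Lemma~\ref{lemma:bounds1}(3) gives $\Re A(u,u)\ge(\sigma_M^2+1)^{-1}\|\na u\|^2$. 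Adding the real parts yields $\Re\mathcal{A}(u,u)\ge\alpha\|u\|_{H^1}^2$ with
\[
\alpha=\min\Bigl\{(\sigma_M^2+1)^{-1},\;c_{\max}^{-2}(1-\sigma_M^2)(\sigma_M^2+1)^{-1/2}\Bigr\}>0 .
\]
Here we use $\sigma_M<1$ and $c\le c_{\max}$; the two real parts are estimated separately, so no rotation of the gradient term is needed. Lax--Milgram then gives a unique solution with $\|u\|_{H^1}\le C'\|f\|_{L^2}$.

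\emph{Main check.} The one point needing care is the sign convention in item (4): the quantity to bound is the real part of $-w\,J=e^{-\ri\theta}J$, and this must be compatible with the range of $\theta$. The proof of (4) shows the bound holds for $\gamma\in[\gamma_a,\gamma_b]$, and the numerator $1+\gamma(\sigma_1+\sigma_2)-\sigma_1\sigma_2\ge1-\sigma_M^2$ because $\sigma_j\ge0$. This holds pointwise wherever $\sigma_1,\sigma_2$ take values in $[0,\gamma_b]$, which is the case everywhere. I would verify this inequality and otherwise treat the argument as routine.
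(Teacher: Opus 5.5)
Your proposal is correct and follows the paper's proof essentially verbatim. Both use Lax--Milgram, with continuity from Lemma~\ref{lemma:bounds1}(1)--(2) and coercivity from estimating the real parts of the gradient term and of $e^{-\ri\theta}(Ju,u)$ separately via Lemma~\ref{lemma:bounds1}(3)--(4), and your coercivity constant is the paper's $C_2$. The unimodular factor announced in your first sentence is never used, as you note yourself, and your continuity constant without the paper's factor $3$ is also valid by Cauchy--Schwarz in $\R^3$.
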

\begin{proof} Actually, we can even prove that given $F\in H^{-1}(\R^2)$ there exists a unique function $u\in H^1(\R^2)$ such that for any $v\in H^1(\R^2)$ it satisfies $\mathcal{A}_{\R^2,w}(u,v)=F(v)$ and there exists a positive constant $C$ such that $\|u\|_{H^1}\leq\frac{1}{C}\|F\|_{H^{-1}}$. To get the result
	we need to show that there exist positive constants $C_1$ and $C_2$ such that for all $u,v\in H^1(\R^2)$ the sesquilinear form $\mathcal{A}_{\R^2,w}$ satisfies
	\begin{equation}
	|\mathcal{A}_{\R^2,w}(u,v)|\leq C_1\|u\|_{H^1}\|v\|_{H^1}  \mbox{ (continuity)}
	\end{equation}
	and
	\begin{equation}
	\Re\,\mathcal{A}_{\R^2,w}(u,u)\geq C_2 \|u\|^2_{H^1},  \mbox{ (coercivity) }
	\end{equation}
	once done this Lax-Milgram's theorem is applied.
	The continuity of $\mathcal{A}_{\R^2,w}$ is easily verified by using the uniform bounds in Lemma~\ref{lemma:bounds1}. Indeed,
	by taking
 \begin{equation}
     \alpha=\max\left\{\sqrt{\sigma_M^2+1} , c_\text{min}^{-2}|w|\left(\sigma_M^2+1\right)\right\}
 \end{equation}
   and then by H\"older's inequality
	\begin{equation}
     \begin{split}
      |\mathcal{A}_{\R^2,w}(u,v)|&\leq\alpha\int_{\R^2}\left(\left|\p_x u\p_x \bar{v}\right| + \left|\p_y u\p_y \bar{v}\right|+|u\bar{v}|\right)dxdy\\
	&\leq\alpha\left(\left\|\p_x u\right\|_2\left\|\p_x v\right\|_2+\left\|\p_y u\right\|_2\left\|\p_y v\right\|_2 + \|u\|_2\|{v}\|_2\right)\\
	&\leq C_1\|u\|_{H^1}\|v\|_{H^1},
     \end{split}
	\end{equation}
	where $C_1=3\alpha$.

	Recall that $c$ is real valued and satisfies the two-sided bound $0 < c_\text{min} \leq c(x,y) \leq c_\text{max}$ almost everywhere; no smoothness of $c$ is used here, nor anywhere in this section. Thus, taking
	\begin{equation}
	C_2=\min\left\{\frac{1}{\sigma_M^2+1},\, c_\text{max}^{-2}\frac{1-\sigma_M^2}{\sqrt{\sigma_M^2+1}}\right\}
	\end{equation}
	we have
\begin{equation}
     \begin{split}
  \Re\, \mathcal{A}_{\R^2,w}(u,u)&=\Re\int_{\R^2}\Bigl(\frac{d_2(y)}{d_1(x)}\left|\p_x u\right|^2+\frac{d_1(x)}{d_2(y)}\left|\p_y u\right|^2\Bigr)dxdy\\
      &\qquad+\Re\int_{\R^2}e^{-\ri\theta}c(x,y)^{-2}d_1(x)d_2(y)|u|^2\,dxdy\\
      &=\int_{\R^2}\Bigl(\Re\Bigl(\frac{d_2(y)}{d_1(x)}\Bigr)\left|\p_x u\right|^2+\Re\Bigl(\frac{d_1(x)}{d_2(y)}\Bigr)\left|\p_y u\right|^2\Bigr)dxdy\\
      &\qquad+\int_{\R^2}\Re\Bigl( e^{-\ri\theta}c(x,y)^{-2}d_1(x)d_2(y)\Bigr)|u|^2\,dxdy\\
	&\geq C_2\int_{\R^2}\left(\left|\p_x u\right|^2+\left|\p_y u\right|^2+|u|^2\right)dxdy\\
	&=C_2\|u\|^2_{H^1}.
     \end{split}
\end{equation}
\end{proof}

\subsection{The physical problem}
\label{subsec:physical}

We record here the well-posedness of the physical problem
\begin{equation}\label{eq:real_helmholtz}
  -c^2\Delta u - \omega^2 u = f \ \text{ in } \R^2,
  \qquad
  \lim_{r\to\infty} r^{1/2}\Bigl(c\,\frac{\p u}{\p r}-\ri\omega u\Bigr)=0,
\end{equation}
by \emph{outgoing solution} of which we mean a function
$u\in H^1_{\mathrm{loc}}(\R^2)$ satisfying the limit uniformly in the
angular variable and the equation in the weak sense
\begin{equation}\label{eq:weak_physical}
  \int_{\R^2}\na u\cdot\na\bar\varphi\,d\x
  -\omega^2\int_{\R^2}c^{-2}u\bar\varphi\,d\x
  =\int_{\R^2}c^{-2}f\bar\varphi\,d\x,
  \qquad\varphi\in C_c^\infty(\R^2).
\end{equation}
Dividing by $c^2$ is what makes the equation meaningful for a
coefficient that is merely bounded: the product $c^2\Delta u$ of an
$L^\infty$ function with a distribution is not defined, whereas
$c^{-2}u$ is. Formulation~\eqref{eq:weak_physical} is the one used
throughout, and it is the $\sigma_1=\sigma_2=0$ case of the PML weak
formulation~\eqref{eq:PML_extended_problem}.
Only~\ref{hyp:support} and~\ref{hyp:positive} are used: no regularity of
$c$ beyond boundedness, and no ray-geometric hypothesis. Throughout,
\begin{equation}\label{eq:Phi0}
  k=\frac{\omega}{c_\infty},
  \qquad
  \Phi_0(\x)=\frac{\ri}{4}H_0^{(1)}\bigl(k|\x|\bigr),\quad\x\ne0,
\end{equation}
so that $\Phi_0$ is the outgoing fundamental solution of $-\Delta-k^2$
in $\R^2$.

\begin{pro}\label{prop:physical}
Assume~\ref{hyp:support} and~\ref{hyp:positive}, and let $\omega>0$.
\begin{enumerate}
  \item\label{it:phys_uniq} \emph{(Uniqueness.)} If
        $v\in H^1_{\mathrm{loc}}(\R^2)$ satisfies $-c^2\Delta v-\omega^2v=0$
        in $\R^2$ together with the Sommerfeld condition
        in~\eqref{eq:real_helmholtz}, then $v\equiv0$.
  \item\label{it:phys_exist} \emph{(Existence.)} For every compactly
        supported $f\in L^2(\R^2)$ there is exactly one solution $u$
        of~\eqref{eq:real_helmholtz}. It satisfies
        $u\in H^2_{\mathrm{loc}}(\R^2)$, and it is real analytic on
        every open set on which $c\equiv c_\infty$ and $f\equiv0$.
\end{enumerate}
\end{pro}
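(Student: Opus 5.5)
The plan is to write the equation as a compactly supported perturbation of the constant-speed Helmholtz operator. Set $q=k^2-\omega^2c^{-2}=\omega^2(c_\infty^{-2}-c^{-2})\in L^\infty$. By~\ref{hyp:support} $q$ is supported in $\overline{B_R}$. The weak form~\eqref{eq:weak_physical} then reads $-\Delta u-k^2u=-qu+c^{-2}f$ in $\mathcal D'(\R^2)$. Since $q$ is bounded, $\Delta u\in L^2_{\mathrm{loc}}$ whenever $u\in L^2_{\mathrm{loc}}$, so interior elliptic regularity for the Laplacian gives $u\in H^2_{\mathrm{loc}}$ at once. On any open set where $c\equiv c_\infty$ and $f\equiv0$ the equation becomes $-\Delta u-k^2u=0$, a constant-coefficient elliptic equation, and real analyticity follows from Weyl's lemma and analytic hypoellipticity.

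\emph{Uniqueness.} Let $v$ be an outgoing solution with $f=0$. Outside $\overline{B_R}$ it solves $\Delta v+k^2v=0$ and satisfies the Sommerfeld condition, because $c=c_\infty$ there. Green's identity on $B_\rho$, $\rho>R$, applied to the weak form with real $c^{-2}$, gives
\[
\Im\int_{\p B_\rho}\bar v\,\p_rv\,ds=0 .
\]
Expanding $\int_{\p B_\rho}|\p_rv-\ri kv|^2\,ds\to0$ then yields $\int_{\p B_\rho}|v|^2\,ds\to0$. Rellich's lemma now gives $v\equiv0$ on $|\x|>R$. The function $v$ solves $\Delta v=-\omega^2c^{-2}v$ with a bounded potential and vanishes on an open set. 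Unique continuation for $-\Delta+V$ with $V\in L^\infty$ in two dimensions (Carleman estimates, or in the plane the Bers--Vekua/quasiconformal theory) then forces $v\equiv0$ on $\R^2$. This continuation step is the delicate one, since only measurability of $c$ is available. I would cite the standard strong unique continuation theorem for $L^\infty$ potentials rather than reprove it.

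\emph{Existence via Lippmann--Schwinger.} Look for $u=\Phi_0*(c^{-2}f-qu)$, which is outgoing by construction. Restricted to a ball $B=B_{R'}$ containing $\overline{B_R}\cup\supp f$, this becomes a second-kind equation
\[
u+K u=g,\qquad Ku=\Phi_0*(qu),\qquad g=\Phi_0*(c^{-2}f),
\]
on $L^2(B)$. The operator $K$ maps $L^2(B)$ into $H^2(B)$, because the Newton potential with the $\Phi_0$ kernel is bounded $L^2_{\mathrm{comp}}\to H^2_{\mathrm{loc}}$. Hence $K$ is compact on $L^2(B)$ by Rellich--Kondrachov, and Fredholm's alternative reduces existence to injectivity. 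Suppose $u+Ku=0$. Extend $u$ to all of $\R^2$ by $u=-\Phi_0*(qu)$. This extension is an outgoing $H^2_{\mathrm{loc}}$ solution of the homogeneous equation, so uniqueness gives $u\equiv0$.

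The solution of the integral equation, extended by the same formula, solves~\eqref{eq:weak_physical} and the radiation condition, since $\Phi_0$ does and the density $c^{-2}f-qu$ has compact support. Conversely, any outgoing solution satisfies the Lippmann--Schwinger equation: apply Green's representation outside a large ball and let the radius tend to infinity using the radiation condition. The solution is therefore exactly one. The main obstacle is unique continuation, which I treat as imported; everything else is standard once the weak form has divided through by $c^2$.
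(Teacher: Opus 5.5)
Your proposal is correct and follows the paper's proof essentially step for step: $H^2_{\mathrm{loc}}$ regularity from $\Delta v\in L^2_{\mathrm{loc}}$; Green's identity, the Sommerfeld condition and Rellich's lemma outside $B_R$; unique continuation for a bounded potential; and existence via the Lippmann--Schwinger equation on a ball, with compactness from the $L^2\to H^2$ mapping of the volume potential and the Fredholm alternative, the kernel being eliminated by part (i). The differences are only cosmetic. You show that outgoing solutions satisfy the integral equation by Green's representation on $B_\rho$ with $\rho\to\infty$, whereas the paper applies Rellich's lemma and analyticity to the entire radiating difference $w$. You also obtain analyticity on every open set where $c\equiv c_\infty$ and $f\equiv0$, which matches the statement slightly more fully than the paper's proof, which records it only outside $\overline{B_{R_1}}$.
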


\begin{proof}
\emph{(\ref{it:phys_uniq})} Since $\Delta v=-\omega^2c^{-2}v\in
L^2_{\mathrm{loc}}(\R^2)$ by~\ref{hyp:positive}, elliptic regularity for
the Laplacian gives $v\in H^2_{\mathrm{loc}}(\R^2)$. The potential
$\omega^2c^{-2}$ is real valued, so Green's identity on $B_\varrho$
yields
\begin{equation}
  \Im\int_{\p B_\varrho}\bar v\,\p_rv\,ds
  =\frac{1}{2\ri}\int_{B_\varrho}\bigl(\bar v\,\Delta v-v\,\overline{\Delta v}\bigr)
  =0
  \qquad\text{for every }\varrho>0 .
\end{equation}
Combined with the Sommerfeld condition this forces
$\int_{|\x|=\varrho}|v|^2\,ds\to0$ as $\varrho\to\infty$, and Rellich's
lemma gives $v\equiv0$ on $\{|\x|>R\}$. On $\R^2$ the function $v$
solves $\Delta v+\omega^2c^{-2}v=0$, an equation whose principal part is
the Laplacian and whose potential $\omega^2c^{-2}$ lies in
$L^\infty(\R^2)$; the unique continuation property for such operators
holds with no regularity assumption on the potential
\cite{Aronszajn1957}, so $v\equiv0$ on $\R^2$.

\emph{(\ref{it:phys_exist})} Choose $R_1\ge R$ with
$\supp f\subset\overline{B_{R_1}}$ and set
$V=\omega^2\bigl(c^{-2}-c_\infty^{-2}\bigr)$, which
by~\ref{hyp:support} and~\ref{hyp:positive} belongs to $L^\infty(\R^2)$
and is supported in $\overline{B_R}$. Equation~\eqref{eq:real_helmholtz}
may be rewritten as
\begin{equation}\label{eq:helm_pert}
  -\Delta u-k^2u=c^{-2}f+Vu \quad\text{in }\R^2,
\end{equation}
with the same radiation condition. If $u$ is a radiating solution
of~\eqref{eq:helm_pert}, then
$w=u-\Phi_0*\bigl(c^{-2}f+Vu\bigr)$ is an entire radiating solution of
the homogeneous Helmholtz equation, hence $w\equiv0$ on the complement
of a large ball by Rellich's lemma and, being real analytic on $\R^2$,
$w\equiv0$ everywhere. Conversely, the volume potential of an $L^2$ density with compact
support is radiating and solves~\eqref{eq:helm_pert}. Thus
$u$ solves~\eqref{eq:real_helmholtz} if and only if
\begin{equation}\label{eq:LS}
  u-\mathcal K u=F,
  \qquad
  \mathcal Ku=\Phi_0*(Vu),
  \qquad
  F=\Phi_0*\bigl(c^{-2}f\bigr),
\end{equation}
and, $V$ and $c^{-2}f$ being supported in $\overline{B_{R_1}}$, it
suffices to solve~\eqref{eq:LS} in $L^2(B_{R_1})$. The volume potential
maps $L^2(B_{R_1})$ boundedly into $H^2(B_{R_1})$ (see,
e.g.,~\cite[Ch.~8]{coltonkress}), so $\mathcal K$ is compact on
$L^2(B_{R_1})$ by Rellich--Kondrachov. Any element of
$\ker(I-\mathcal K)$ extends, through the right-hand side
of~\eqref{eq:LS}, to a radiating solution of $-c^2\Delta u-\omega^2u=0$,
which vanishes by part~(\ref{it:phys_uniq}); the Riesz--Fredholm theory
therefore makes $I-\mathcal K$ bijective on $L^2(B_{R_1})$ with bounded
inverse, and~\eqref{eq:LS} has exactly one solution. Finally
$\Delta u=-c^{-2}\bigl(\omega^2u+f\bigr)\in L^2_{\mathrm{loc}}(\R^2)$
gives $u\in H^2_{\mathrm{loc}}(\R^2)$, and on
$\R^2\setminus\overline{B_{R_1}}$ the function $u$ satisfies
$\Delta u+k^2u=0$, hence is real analytic there.
\end{proof}

\begin{rmk}\label{rmk:no_ntrap_needed}
Proposition~\ref{prop:physical} is the only property
of~\eqref{eq:real_helmholtz} used in
Sections~\ref{sec:allomega} and~\ref{sec:convergence}. In particular
neither the non-trapping condition~\ref{hyp:morawetz} of
Proposition~\ref{prop:non_trapping} nor any smoothness of $c$ enters
the main results; a piecewise constant speed, with the interfaces that
occur in layered media, is admissible throughout. What non-trapping
does buy is quantitative: it is what makes the Cauchy data
$\mathcal N_R(u)$ of Section~\ref{sec:convergence} grow at most
polynomially in $\omega$; see Remark~\ref{rmk:open}.
\end{rmk}

\subsection{The non-trapping condition}
\label{subsec:nontrapping}

\begin{pro}\label{prop:non_trapping}
Let $c\in C^{1,1}(\R^2)$ satisfy~\ref{hyp:support}, \ref{hyp:positive} and
\begin{enumerate}[label=(NT),ref=(NT)]
    \item\label{hyp:morawetz} There exists a point $\mathbf{x}_0\in\R^2$ such that
          \begin{equation}\label{eq:morawetz_cond}
              \nabla c(x,y)\cdot\bigl((x,y)-\mathbf{x}_0\bigr) \;<\; c(x,y)
              \qquad\forall\,(x,y)\in\R^2.
          \end{equation}
\end{enumerate}
Since $c\in C^{1,1}(\R^2)$ and, by~\ref{hyp:support}, $\na c\equiv0$ on
$\{|\x|\ge R\}$, the function
$(x,y)\mapsto1-c(x,y)^{-1}\nabla c(x,y)\cdot\bigl((x,y)-\mathbf x_0\bigr)$
is continuous and identically equal to $1$ outside the compact set
$\overline{B_R}$, so~\ref{hyp:morawetz} is equivalent to the existence of
a constant $\mu\in(0,1]$ with
\begin{equation}\label{eq:mu_def}
  \nabla c(x,y)\cdot\bigl((x,y)-\mathbf{x}_0\bigr)\;\le\;(1-\mu)\,c(x,y)
  \qquad\forall\,(x,y)\in\R^2 .
\end{equation}
Then $c$ is non-trapping: every bicharacteristic of the Hamiltonian
$H(x,y,\xi,\eta)=c(x,y)\sqrt{\xi^2+\eta^2}$ satisfies
$\sqrt{x(t)^2+y(t)^2}\to+\infty$ as $|t|\to+\infty$. If moreover
$c\in C^\infty(\R^2)$, then for every $\omega>0$ and every compactly
supported $f\in L^2(\R^2)$ the limit
\begin{equation}\label{eq:LAP}
  \lim_{\epsilon\to0^+}\bigl(-c^2\Delta-(\omega^2+\ri\epsilon)I\bigr)^{-1}f
\end{equation}
exists in $H^1_\text{loc}(\R^2)$ and coincides with the solution $u$
of~\eqref{eq:real_helmholtz} given by
Proposition~\ref{prop:physical}(\ref{it:phys_exist}).
\end{pro}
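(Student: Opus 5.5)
The plan has three parts: show the equivalence of (NT) with \eqref{eq:mu_def}, prove escape of rays with a Morawetz escape function, and obtain the limiting absorption statement from standard non-trapping resolvent theory.

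\emph{Equivalence of (NT) with \eqref{eq:mu_def}.} This follows from the argument already given in the statement. The continuous function $g=1-c^{-1}\na c\cdot(\x-\x_0)$ is positive everywhere and equal to $1$ off $\overline{B_R}$. Its infimum over $\overline{B_R}$ is therefore attained and positive. Take $\mu=\min\{1,\min_{\overline{B_R}}g\}$.

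\emph{Escape of rays.} On the level set $H=c|\xi|=E>0$, Hamilton's equations are
\[
\dot\x=c\,\xi/|\xi|,\qquad \dot\xi=-|\xi|\na c .
\]
The right-hand side is Lipschitz because $c\in C^{1,1}$ and $|\xi|$ stays away from $0$ on $\{H=E\}$. Moreover $|\xi|=E/c\in[E/c_{\max},E/c_{\min}]$, so the flow is complete. Set $M=(\x-\x_0)\cdot\xi$. Then
\[
\dot M=c|\xi|-|\xi|\,\na c\cdot(\x-\x_0)=|\xi|\bigl(c-\na c\cdot(\x-\x_0)\bigr)\ge \mu\,c|\xi|=\mu E .
\]
This is the point where the two terms must be kept together: \eqref{eq:mu_def} is used in exactly this combination. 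Hence $M(t)\ge M(0)+\mu E t$, so $|M(t)|\to\infty$ as $|t|\to\infty$. Since $|M|\le|\x-\x_0|\,E/c_{\min}$, it follows that $|\x(t)|\to\infty$ in both time directions.

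\emph{Limiting absorption.} Assume now $c\in C^\infty$. I would write $-c^2\Delta$ as a self-adjoint operator $P$ on $L^2(\R^2;c^{-2}d\x)$; it is the Laplace--Beltrami operator of the metric $c^{-2}(dx^2+dy^2)$, which is Euclidean outside $B_R$. Non-trapping is then the hypothesis of the classical limiting absorption principle for compactly supported perturbations of the Laplacian, via Vainberg, Melrose--Sj\"ostrand, or Mourre theory with conjugate operator built from $M$. That principle gives existence of
\[
\lim_{\epsilon\to0^+}(P-\omega^2-\ri\epsilon)^{-1}f
\]
in weighted $L^2$ spaces, and by interior elliptic estimates also in $H^1_{\mathrm{loc}}$. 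The limit is outgoing: outside $B_R$ it is given by the free outgoing resolvent applied to a compactly supported source, since for $\epsilon>0$ the function $u_\epsilon$ decays and behaves like $e^{\ri\sqrt{\omega^2+\ri\epsilon}\,r/c_\infty}$. Passing to the limit shows that the Sommerfeld condition holds. The limit solves \eqref{eq:weak_physical}, so uniqueness in Proposition~\ref{prop:physical}\eqref{it:phys_uniq} identifies it with $u$.

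The main obstacle is this last step: carefully justifying that the LAP limit exists and is outgoing at the regularity stated. I would rely on the black-box scattering framework, where non-trapping even gives polynomial resolvent bounds, rather than reprove it. In dimension two, at fixed $\omega>0$, there is no low-frequency difficulty.
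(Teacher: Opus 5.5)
Your ray argument is the paper's own. It uses the same flat multiplier $M=(\x-\x_0)\cdot\xi$ and the same identity $\dot M=|\xi|\bigl(c-\na c\cdot(\x-\x_0)\bigr)\ge\mu E$, with the two terms kept together. It also uses the same bound $|M|\le|\x-\x_0|\,E/c_{\min}$ and the same completeness remark. One slip: $M(t)\ge M(0)+\mu Et$ holds only for $t\ge0$. For $t<0$ it reverses to $M(t)\le M(0)+\mu Et$, and that is what gives escape as $t\to-\infty$.

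Where you genuinely differ is the limiting absorption part. You realise $-c^2\Delta$ as $-\Delta_g$ for $g=c^{-2}(dx^2+dy^2)$, which is correct in two dimensions, and quote the LAP from non-trapping scattering theory. Under $c\in C^\infty$ this is legitimate. Your identification of the limit is also sound: outgoing via the free resolvent outside $B_R$, then uniqueness from Proposition~\ref{prop:physical}.

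However, you present the LAP as a consequence of non-trapping, and the paper draws the opposite distinction. At fixed $\omega>0$ the limit exists with no geometric hypothesis and no smoothness. Non-trapping only buys the high-frequency bound $\|\chi R(\omega)\chi\|\le C/\omega$. The paper merely points back to Proposition~\ref{prop:physical}. Spelled out, the argument runs as follows:
\begin{itemize}
\item For $\epsilon>0$, the $L^2$ resolvent $u_\epsilon$ lies in $L^2$, so it is given by the decaying free resolvent. Hence it solves the Lippmann--Schwinger equation with $k_\epsilon^2=(\omega^2+\ri\epsilon)/c_\infty^2$ and $\Im k_\epsilon>0$.
\item The operators $\mathcal K_\epsilon$ converge to $\mathcal K$ in norm on $L^2(B_{R_1})$. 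This is because $H_0^{(1)}(k_\epsilon|\cdot|)-H_0^{(1)}(k|\cdot|)\to0$ uniformly on bounded sets, the logarithmic singularities cancelling.
\item Invertibility of $I-\mathcal K$ then gives uniformly bounded inverses and $u_\epsilon\to u$ in $H^1_{\mathrm{loc}}$.
\end{itemize}
That route is elementary and works for merely bounded $c$. Yours uses much heavier machinery, and needs $C^\infty$ only because the cited theorems do. It also hides the point the proposition is making about what non-trapping is actually for.
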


\begin{proof}
Let $\gamma(t)=(x(t),y(t),\xi(t),\eta(t))$ be a bicharacteristic with
energy $E=c(x(t),y(t))\sqrt{\xi(t)^2+\eta(t)^2}>0$. Define the
\emph{Morawetz multiplier}
\begin{equation}\label{eq:morawetz_mult}
    M(t) = \bigl((x(t),y(t))-\mathbf{x}_0\bigr)\cdot(\xi(t),\eta(t)).
\end{equation}
The Hamilton equations give
\begin{equation}\label{eq:dMdt}
    \dot{M}(t)
    = c(x,y)\sqrt{\xi^2+\eta^2}
      - \sqrt{\xi^2+\eta^2}\,
        \nabla c(x,y)\cdot\bigl((x,y)-\mathbf{x}_0\bigr).
\end{equation}
It is essential \emph{not} to estimate the two terms
of~\eqref{eq:dMdt} separately: the multiplier~\eqref{eq:morawetz_mult} is
the flat one, so the propagation speed enters only through the
combination in which the two terms appear. Factoring out
$\sqrt{\xi^2+\eta^2}$ and then using the energy identity
$c(x,y)\sqrt{\xi^2+\eta^2}=E$ once, we obtain the exact identity
\begin{equation}\label{eq:Mdot_identity}
\begin{split}
    \dot M(t)
    &= \sqrt{\xi^2+\eta^2}\,
      \Bigl[\,c(x,y)-\nabla c(x,y)\cdot\bigl((x,y)-\mathbf x_0\bigr)\Bigr]\\
    &= E\left[1-\frac{\nabla c(x,y)\cdot\bigl((x,y)-\mathbf x_0\bigr)}{c(x,y)}\right],
\end{split}
\end{equation}
in which the relevant quantity is the \emph{scale-invariant} ratio
$c^{-1}\nabla c\cdot(\x-\mathbf x_0)$ rather than
$\nabla c\cdot(\x-\mathbf x_0)$ alone. By~\ref{hyp:morawetz},
in the form~\eqref{eq:mu_def},
\begin{equation}\label{eq:Mdot_lower}
    \dot{M}(t) \;\geq\; \mu E \;>\;0.
\end{equation}
Integrating gives $M(t)\geq M(0)+\mu Et\to+\infty$ as $t\to+\infty$. On
the other hand, by Cauchy--Schwarz and, using~\ref{hyp:positive},
$\sqrt{\xi^2+\eta^2}=E/c(x,y)\leq E/c_0$,
\begin{equation}
    M(t) \leq \bigl|(x(t),y(t))-\mathbf{x}_0\bigr|\cdot\frac{E}{c_0},
\end{equation}
so
\begin{equation}
    \bigl|(x(t),y(t))-\mathbf{x}_0\bigr|
    \;\geq\; \frac{c_{\text{min}}}{E}\,M(t)
    \;\geq\; \frac{c_{\text{min}}}{E}\bigl(M(0)+\mu Et\bigr)
    \;\longrightarrow\; +\infty.
\end{equation}
The flow is complete: $|\dot\x|=c\le c_{\text{max}}$ and
$|\boldsymbol\xi|=E/c\in[E/c_{\text{max}},E/c_{\text{min}}]$, so no
trajectory escapes to infinity in finite time or reaches
$\boldsymbol\xi=0$, where $H$ fails to be differentiable. As
$t\to-\infty$ the same inequality $\dot M\ge\mu E$ gives
$M(t)\le M(0)+\mu Et\to-\infty$, whence
$|\x(t)-\x_0|\ge c_{\text{min}}|M(t)|/E\to+\infty$. Hence $c$ is
non-trapping.

Note that only $\na c$ has been used, and only through
Lipschitz continuity of $\na c$, which is what makes the Hamiltonian
flow well defined; the argument is therefore valid in $C^{1,1}(\R^2)$.

It remains to say what non-trapping buys, and here a distinction must be
drawn. The existence of the limit~\eqref{eq:LAP}, and with it of the
outgoing solution, does \emph{not} require it: that is already
Proposition~\ref{prop:physical}, proved under~\ref{hyp:support}
and~\ref{hyp:positive} alone. What non-trapping buys is quantitative. It
is the geometric hypothesis under which the cut-off resolvent obeys the
optimal high-frequency bound
\begin{equation}\label{eq:ntrap_resolvent}
  \bigl\|\chi\bigl(-c^2\Delta-(\omega^2+\ri0)\bigr)^{-1}\chi\bigr\|
  _{L^2(\R^2)\to L^2(\R^2)}\ \le\ \frac{C}{\omega},
  \qquad \omega\ge\omega_0,
\end{equation}
for every $\chi\in C_c^\infty(\R^2)$. Estimate~\eqref{eq:ntrap_resolvent}
is due to Burq~\cite{Burq2002}, who obtains resonance-free regions and
resolvent bounds in them for non-trapping geometries in any dimension;
the constant $C$ was later made explicit, in terms of the length of the
longest bicharacteristic meeting $\supp\chi$, by Galkowski, Spence and
Wunsch~\cite{GalkowskiSpenceWunsch2020}, who work on manifolds with
Euclidean ends --- the class to which $-c^2\Delta$ belongs
by~\ref{hyp:support}. Both statements assume a smooth speed.

Without any geometric hypothesis the cut-off resolvent remains finite
for each $\omega$, but the best general bound is exponential,
$\|\chi R(\omega)\chi\|\le e^{\alpha\omega}$, proved for arbitrary
obstacles by Burq~\cite{Burq1998} and refined away from the trapped set
by Cardoso and Vodev~\cite{CardosoVodev2002}. This is the precise sense
in which a trapping medium is admissible in
Theorem~\ref{thm:convergence} and yet useless in the high-frequency
limit; see Remark~\ref{rmk:open}.
\end{proof}

\begin{rmk}\label{rmk:mollify}
The two halves of Proposition~\ref{prop:non_trapping} are stated under
different regularity because the second is a citation, not a technique:
the Morawetz computation needs $c\in C^{1,1}$, while the limiting
absorption principle is quoted from a smooth theory. Condition
\ref{hyp:morawetz} is stable under mollification within $C^{1,1}$: if
$\eta_\epsilon$ is a standard mollifier supported in $B_\epsilon$ and
$c_\epsilon=c*\eta_\epsilon$, then, writing
$\x-\x_0=\bigl((\x-\y)-\x_0\bigr)+\y$ and using~\eqref{eq:mu_def}
pointwise,
\begin{equation}\label{eq:mollify}
\begin{split}
  \na c_\epsilon(\x)\cdot(\x-\x_0)
  &=\int \na c(\x-\y)\cdot\bigl((\x-\y)-\x_0\bigr)\eta_\epsilon(\y)\,d\y\\
  &\qquad+\int \na c(\x-\y)\cdot\y\,\eta_\epsilon(\y)\,d\y\\
  &\le(1-\mu)\,c_\epsilon(\x)+\epsilon\,\|\na c\|_{L^\infty(\R^2)} ,
\end{split}
\end{equation}
so that, since $c_\epsilon\ge c_{\text{min}}$, the
speed $c_\epsilon$ satisfies~\eqref{eq:mu_def} with $\mu$ replaced by
$\mu/2$ as soon as
$\epsilon\le\mu\,c_{\text{min}}/\bigl(2\|\na c\|_{L^\infty}\bigr)$; the
constants $c_{\text{min}},c_{\text{max}}$ are preserved and $R$ is
replaced by $R+\epsilon$. What
this does \emph{not} yet supply is the interchange of the two limits
$\epsilon\to0$ and the absorption limit in~\eqref{eq:LAP}, which would
require a resolvent bound uniform in $\epsilon$. Such a bound does not
follow from~\cite{Burq2002,GalkowskiSpenceWunsch2020} as stated, since
their constants are not exhibited in terms of
$\mu,c_{\text{min}},c_{\text{max}},R$ alone; a Morawetz--Rellich identity carried out at the level of the
partial differential equation rather than of the bicharacteristics would
supply it, and would at the same time make the estimate of
Section~\ref{sec:convergence} explicit in $\omega$. We do not pursue
this here.

The computation~\eqref{eq:mollify} uses only $\na c\in L^\infty$, so it
is available in $W^{1,\infty}(\R^2)$; what needs $C^{1,1}$ is the
Hamiltonian flow, not the mollification. At $L^\infty$ regularity,
however, the device fails, and for a structural reason. If $c$ merely lies in $L^\infty$ and jumps across an interface
then $|\na c_\epsilon|\sim\epsilon^{-1}$ there and the left-hand side
of~\eqref{eq:mollify} diverges. The failure is not an artefact of the
method: a piecewise constant speed with a low-velocity inclusion
$c\equiv c_-<c_\infty$ on $B_R$ traps, by total internal reflection,
every ray whose impact parameter exceeds $R\,c_-/c_\infty$, a set of
positive measure. No non-trapping statement can survive at that
regularity, which is precisely why the existence theory of
Proposition~\ref{prop:physical} is formulated without one.
\end{rmk}

\begin{rmk}\label{rmk:sharpness}
Condition~\ref{hyp:morawetz} does \emph{not} say that $c$ is
non-increasing along rays emanating from $\mathbf x_0$. It allows $c$ to
increase, provided it does so more slowly than linearly relative to its
own size: what is bounded is the logarithmic derivative
$\p_\varrho\log c$ along the ray, where $\varrho=|\x-\mathbf x_0|$, and
the condition reads $\varrho\,\p_\varrho\log c<1$. This is exactly the
sharp threshold. Indeed, for a radial profile $c=c(\varrho)$ centred at
$\mathbf x_0$, \eqref{eq:morawetz_cond} becomes
$\varrho\,c'(\varrho)<c(\varrho)$, i.e.
\begin{equation}\label{eq:herglotz}
  \frac{d}{d\varrho}\left(\frac{\varrho}{c(\varrho)}\right)>0,
\end{equation}
which is the classical Herglotz condition of geometrical seismology: the
Clairaut invariant of a ray in the metric $c^{-2}\delta$ is
$\varrho\sin\alpha/c(\varrho)$, so \eqref{eq:herglotz} is precisely the
statement that every ray has at most one turning point and escapes.
Its failure at some radius $\varrho_0$, i.e.
$c'(\varrho_0)=c(\varrho_0)/\varrho_0$, is the circular-orbit condition,
and a trapped bicharacteristic then exists. Thus~\ref{hyp:morawetz}
cannot be weakened within this class.

The stronger requirement $\nabla c\cdot(\x-\mathbf x_0)\le0$, i.e.
$\mu=1$ in~\eqref{eq:mu_def}, would rule out \emph{every} local minimum
of $c$, and in particular every low-velocity inclusion, however weak;
these are the media of greatest interest in applications and only become
trapping once the well is deep or steep enough
to violate~\eqref{eq:herglotz}. Note also that~\ref{hyp:support} alone
does not imply~\ref{hyp:morawetz}: a smooth $c$ equal to $c_\infty$
outside a compact set may still violate~\eqref{eq:herglotz}.

It may help to relate~\ref{hyp:morawetz} to the hypotheses under which
frequency-explicit bounds are available for rough media. Chaumont-Frelet
and Spence~\cite{ChaumontFreletSpence2023} obtain such bounds for
coefficients that are merely bounded, under a monotonicity condition
along one spatial direction. Written as
$\p_\varrho\bigl(\varrho/c\bigr)>0$, condition~\ref{hyp:morawetz} is a
monotonicity condition of the same kind, along rays issuing from
$\x_0$; the difference is that we require it of $\na c$ pointwise,
which forces $c\in C^{1,1}$, whereas their formulation survives jumps of
the right sign and is therefore available at $L^\infty$ regularity.
Making~\ref{hyp:morawetz} itself meaningful for a discontinuous speed,
presumably in a distributional or monotone-rearrangement sense, and
recovering a quantitative resolvent bound from it, is the natural way
to attack the frequency-uniformity question left open in
Remark~\ref{rmk:open}.
\end{rmk}

\section{The truncated problem}
\label{sec:truncated}

We now turn to the truncated problem~\eqref{eq:PML_truncated_problem},
posed on the bounded rectangle $\Omega_\delta$ with homogeneous
Dirichlet boundary condition. Let $T_\delta$ denote the unbounded
operator on $L^2(\Omega_\delta)$ with maximal domain
\begin{equation}
  \mathcal{D}(T_\delta) = \bigl\{u \in H^1_0(\Omega_\delta) : \widetilde\Delta u \in L^2(\Omega_\delta)\bigr\},
  \qquad T_\delta u = -c^2\widetilde\Delta u.
\end{equation}

For a closed operator $T$ we write $\Sigma_w(T)$ for the set of
$\lambda\in\C$ such that $T-\lambda I$ fails to be a Fredholm operator
of index zero.

\begin{pro}\label{prop:compact_resolvent}
Let $w_0=-e^{-\ri\theta}$ be as in Lemma~\ref{lm:well-possed}. Then
$w_0\in\rho(T_\delta)$, and $(T_\delta-w_0I)^{-1}$ is a compact
operator on $L^2(\Omega_\delta)$.
\end{pro}

\begin{proof}
The pointwise bounds of Lemma~\ref{lemma:bounds1} hold for every
$(x,y)\in\R^2$, hence in particular on $\Omega_\delta$; the proof of
Lemma~\ref{lm:well-possed} used no property of $\R^2$ beyond these
pointwise bounds, so $\mathcal{A}_{\Omega_\delta,w_0}$ is continuous
and coercive on $H^1_0(\Omega_\delta)$ with the same constants
$C_1,C_2$.

For $f\in L^2(\Omega_\delta)$, Lemma~\ref{lemma:bounds1}(2) gives
$\|(Jf,\cdot)\|_{H^{-1}(\Omega_\delta)}\le
c_\text{min}^{-2}(\sigma_M^2+1)\|f\|_{L^2(\Omega_\delta)}$, so by
Lax--Milgram there is a unique $u\in H^1_0(\Omega_\delta)$ with
\begin{equation}
  \|u\|_{H^1(\Omega_\delta)} \le
  \frac{c_\text{min}^{-2}(\sigma_M^2+1)}{C_2}\,\|f\|_{L^2(\Omega_\delta)}.
\end{equation}
This defines a bounded operator $R_{w_0}:L^2(\Omega_\delta)\to
H^1_0(\Omega_\delta)$, $f\mapsto u=(T_\delta-w_0I)^{-1}f$; in
particular $w_0\in\rho(T_\delta)$.

The embedding $\iota: H^1_0(\Omega_\delta)\hookrightarrow
L^2(\Omega_\delta)$ is compact by the Rellich--Kondrachov theorem:
extending by zero identifies $H^1_0(\Omega_\delta)$ isometrically
with a closed subspace of $H^1(B)$ for any ball
$B\supset\overline{\Omega_\delta}$, where Rellich--Kondrachov applies
without any regularity requirement on $\partial\Omega_\delta$. Hence
\begin{equation}
  (T_\delta-w_0I)^{-1} = \iota\circ R_{w_0} :
  L^2(\Omega_\delta) \to L^2(\Omega_\delta)
\end{equation}
is the composition of a bounded operator with a compact one, hence
compact.
\end{proof}

\begin{cor}\label{cor:discrete_spectrum}
$\Sigma_w(T_\delta)=\emptyset$. The spectrum $\sigma(T_\delta)$
consists of isolated eigenvalues of finite algebraic multiplicity,
with no accumulation point in $\C$; in particular $\sigma_p(T_\delta)$
is at most countable.
\end{cor}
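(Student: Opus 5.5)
The corollary follows from Proposition~\ref{prop:compact_resolvent} by the standard spectral theory of operators with compact resolvent; the plan is to transfer the Riesz--Schauder theory of the compact operator $K=(T_\delta-w_0I)^{-1}$ back to $T_\delta$ through the spectral mapping $\lambda\mapsto\mu=(\lambda-w_0)^{-1}$.

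\textbf{Step 1: $T_\delta$ is closed.} Since $w_0\in\rho(T_\delta)$, the operator $T_\delta-w_0I$ is the inverse of a bounded operator, hence closed, and so $T_\delta$ is closed. This makes $\Sigma_w(T_\delta)$ meaningful.

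\textbf{Step 2: the resolvent identity.} For $\lambda\ne w_0$ I will write
\begin{equation*}
  T_\delta-\lambda I=(T_\delta-w_0I)\bigl(I-(\lambda-w_0)K\bigr)
  =-(\lambda-w_0)(T_\delta-w_0I)\bigl(K-\mu I\bigr),
  \qquad \mu=(\lambda-w_0)^{-1}.
\end{equation*}
Here $T_\delta-w_0I:\mathcal D(T_\delta)\to L^2(\Omega_\delta)$ is bijective, and $K-\mu I$ maps $\mathcal D(T_\delta)$ into itself, because $K$ maps $L^2(\Omega_\delta)$ onto $\mathcal D(T_\delta)$.

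\textbf{Step 3: $\Sigma_w(T_\delta)=\emptyset$.} The operator $(\lambda-w_0)K$ is compact, so $I-(\lambda-w_0)K$ is Fredholm of index zero on $L^2(\Omega_\delta)$. The same holds for its restriction to $\mathcal D(T_\delta)$ with the graph norm, which is isomorphic to $L^2$ via $K$. Composing with the isomorphism $T_\delta-w_0I$ shows that $T_\delta-\lambda I$ is Fredholm of index zero for every $\lambda\in\C$. The case $\lambda=w_0$ is trivial, since $w_0\in\rho(T_\delta)$.

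\textbf{Step 4: the structure of the spectrum.} By the Riesz--Schauder theorem, $\sigma(K)\setminus\{0\}$ consists of isolated eigenvalues of finite algebraic multiplicity, accumulating only at $0$. The identity in Step 2 gives the following correspondences:
\begin{itemize}
\item $\lambda\in\sigma(T_\delta)$ if and only if $\mu\in\sigma(K)\setminus\{0\}$;
\item $\ker(T_\delta-\lambda I)^m=\ker(K-\mu I)^m$ for every $m$, which follows by induction since $K$ commutes with $T_\delta$ on $\mathcal D(T_\delta)$;
\item the Riesz projections coincide under the change of variable $\mu=(\lambda-w_0)^{-1}$.
\end{itemize}
Accumulation of eigenvalues $\mu_n\to0$ corresponds to $|\lambda_n|\to\infty$, so the spectrum of $T_\delta$ has no finite accumulation point. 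A closed discrete subset of $\C$ is at most countable, which gives the last claim.

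\textbf{Main obstacle.} The only point needing care is the domain bookkeeping in Step 3. Fredholmness for an unbounded operator should be read as a bounded map from $\mathcal D(T_\delta)$, with the graph norm, into $L^2(\Omega_\delta)$. I would verify that $K-\mu I$ restricts to a Fredholm operator of the same index on $\mathcal D(T_\delta)$. Conjugating by $K$ reduces this to $K-\mu I$ on $L^2$, because $K$ commutes with itself.
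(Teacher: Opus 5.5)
Your proposal is correct and takes essentially the same route as the paper: both transfer the Riesz--Schauder theory of the compact resolvent $K=(T_\delta-w_0I)^{-1}$ to $T_\delta$ through $\lambda=w_0+\mu^{-1}$. Your Step~3 spells out the Fredholm factorisation that the paper only writes in the proof of Proposition~\ref{prop:truncated_wellposed}, and your matching of $\ker(T_\delta-\lambda I)^m$ and of the Riesz projections supplies the algebraic-multiplicity claim, which the paper asserts after matching eigenvectors only. (Factoring on the other side, $T_\delta-\lambda I=\bigl(I-(\lambda-w_0)K\bigr)(T_\delta-w_0I)$, would spare you the graph-norm restriction of $K-\mu I$ to $\mathcal D(T_\delta)$.)
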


\begin{proof}
Write $K=(T_\delta-w_0I)^{-1}$, compact by
Proposition~\ref{prop:compact_resolvent}. For $u\in\mathcal D(T_\delta)$,
\begin{equation}
  K(T_\delta-\lambda I)u = K(T_\delta - w_0I)u - (\lambda-w_0)Ku
  = u - (\lambda-w_0)Ku, \qquad \lambda\in\C,
\end{equation}
so $u\ne0$ solves $(T_\delta-\lambda I)u=0$ if and only if
$Ku=(\lambda-w_0)^{-1}u$ (for $\lambda\ne w_0$; $\lambda=w_0$ is
already excluded since $w_0\in\rho(T_\delta)$). By the classical
Riesz--Schauder theory of compact operators, the nonzero eigenvalues
of $K$ form a set with no accumulation point other than possibly $0$,
each of finite multiplicity. Consequently the corresponding values
$\lambda=w_0+\mu^{-1}$, as $\mu$ ranges over the nonzero eigenvalues
of $K$, are isolated, of finite algebraic multiplicity, with no
finite accumulation point, and constitute exactly $\sigma(T_\delta)$.
In particular $\Sigma_w(T_\delta)=\emptyset$.
\end{proof}

\begin{pro}\label{prop:truncated_wellposed}
Let $\omega>0$ be such that $\omega^2\notin\sigma_p(T_\delta)$. Then
for every $f\in L^2(\Omega_\delta)$ there exists a unique
$u\in H^1_0(\Omega_\delta)$ solving~\eqref{eq:PML_truncated_problem}
with $w=\omega^2$, depending continuously on $f$. This holds for
every $\omega>0$ outside an at most countable set.
\end{pro}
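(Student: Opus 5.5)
The plan is to reduce the truncated problem to an operator equation for the compact operator $K=(T_\delta-w_0I)^{-1}$ of Proposition~\ref{prop:compact_resolvent}, and then apply the Fredholm alternative.

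\emph{Step 1: reformulation.} For $u\in H^1_0(\Omega_\delta)$, problem~\eqref{eq:PML_truncated_problem} with $w=\omega^2$ reads
\[
\mathcal A_{\Omega_\delta,w_0}(u,v)-(\omega^2-w_0)(Ju,v)_{\Omega_\delta}=(Jf,v)_{\Omega_\delta}
\qquad\text{for all } v\in H^1_0(\Omega_\delta).
\]
By the Lax--Milgram construction in Proposition~\ref{prop:compact_resolvent}, $g\mapsto K g$ is precisely the solution operator for $\mathcal A_{\Omega_\delta,w_0}(\cdot,v)=(Jg,v)$. The problem is therefore equivalent to
\[
u=K\bigl(f+(\omega^2-w_0)u\bigr),
\qquad\text{that is,}\qquad
\bigl(I-(\omega^2-w_0)K\bigr)u=Kf .
\]
One inclusion is immediate: a solution $u\in H^1_0$ satisfies this identity. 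Conversely, any $u\in L^2$ satisfying it lies in the range of $K$, hence in $H^1_0(\Omega_\delta)$, and solves the weak problem. I have to be careful that $J$ appears on both sides with the same weight. This works because $(Ju,v)$ is exactly the zeroth order term, so the shift $w_0\to\omega^2$ is a multiple of $J$ acting on $u$, matching the right-hand side structure $(Jf,v)$.

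\emph{Step 2: uniqueness implies existence.} Note $\omega^2\neq w_0$, since $w_0$ is not real positive. Set $\mu=(\omega^2-w_0)^{-1}$. The operator $I-(\omega^2-w_0)K$ is injective exactly when $\mu$ is not an eigenvalue of $K$. By the computation in Corollary~\ref{cor:discrete_spectrum}, this is exactly the hypothesis $\omega^2\notin\sigma_p(T_\delta)$. Since $K$ is compact on $L^2(\Omega_\delta)$, the Riesz--Fredholm theory makes $I-(\omega^2-w_0)K$ bijective with bounded inverse on $L^2$. This gives existence, uniqueness, and
\[
\|u\|_{L^2}\le C\|f\|_{L^2}.
\]

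\emph{Step 3: continuous dependence in $H^1$.} To upgrade to the $H^1$ norm, use $u=R_{w_0}\bigl(f+(\omega^2-w_0)u\bigr)$ together with the bound $R_{w_0}:L^2\to H^1_0$ from Proposition~\ref{prop:compact_resolvent}. This yields
\[
\|u\|_{H^1}\le C'\bigl(1+|\omega^2-w_0|\,C\bigr)\|f\|_{L^2}.
\]

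\emph{Step 4: the exceptional set.} By Corollary~\ref{cor:discrete_spectrum}, $\sigma_p(T_\delta)$ is at most countable. Hence the set of $\omega>0$ with $\omega^2\in\sigma_p(T_\delta)$ is at most countable, as the preimage of a countable set under $\omega\mapsto\omega^2$, which is injective on $(0,\infty)$.

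The main obstacle is the equivalence in Step 1. In particular, one must check that the eigenvalue notion in $\sigma_p(T_\delta)$, defined via the maximal domain with $\widetilde\Delta u\in L^2$, coincides with nontrivial solutions of the weak homogeneous problem. I would verify this by testing with $v\in C_c^\infty$: a weak solution then has $-c^2\widetilde\Delta u=\omega^2u\in L^2$ in the distributional sense (divide by $J$, which is bounded away from zero by Lemma~\ref{lemma:bounds1}(2)), so $u\in\mathcal D(T_\delta)$.
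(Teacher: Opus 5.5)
Your proposal is correct and takes essentially the paper's route: both use the compact resolvent $K=(T_\delta-w_0I)^{-1}$ of Proposition~\ref{prop:compact_resolvent} and apply the Riesz--Fredholm alternative to $I-(\omega^2-w_0)K$, with injectivity supplied by the eigenvalue correspondence of Corollary~\ref{cor:discrete_spectrum}, and with the exceptional set coming from the countability of $\sigma_p(T_\delta)$. The only difference is that you work directly at the level of the weak formulation, via $u=K\bigl(f+(\omega^2-w_0)u\bigr)$, rather than through the paper's ``formal'' factorisation $T_\delta-\omega^2I=(T_\delta-w_0I)\bigl(I-(\omega^2-w_0)K\bigr)$; this avoids composing Fredholm operators across different spaces and makes the $H^1$ continuous dependence explicit, which the paper leaves implicit.
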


\begin{proof}
By Corollary~\ref{cor:discrete_spectrum}, $T_\delta-\omega^2I$ is
Fredholm of index zero for every $\omega^2\in\C$: writing
$T_\delta-\omega^2I = (T_\delta-w_0I)\bigl(I-(\omega^2-w_0)K\bigr)$
formally on $\mathcal D(T_\delta)$, the factor
$I-(\omega^2-w_0)K$ is Fredholm of index $0$ on $L^2(\Omega_\delta)$
for every scalar $(\omega^2-w_0)$ by the Riesz--Schauder alternative
for the compact operator $K$, and $(T_\delta-w_0I)$ is bijective;
composing preserves Fredholm index $0$. A Fredholm operator of index
zero is bijective iff injective; the hypothesis
$\omega^2\notin\sigma_p(T_\delta)$ is exactly
$\ker(T_\delta-\omega^2I)=\{0\}$, giving bijectivity with bounded
inverse. The last claim follows since $\sigma_p(T_\delta)$ is at most
countable.
\end{proof}

\section{Well-posedness for every frequency}
\label{sec:allomega}

The results of Section~\ref{sec:truncated} leave two blemishes: the
truncated problem is solvable only for $\omega$ outside an at most
countable set, and no control is offered on the size of the inverse.
This section removes both. The mechanism is a localisation lemma which
shows that, as far as the essential spectrum is concerned, the variable
speed is invisible: the operator may be replaced by its homogeneous
counterpart, for which the spectrum was computed by Kim and
Pasciak~\cite{KimPasciak2010a}.

From here on we assume, in addition
to~\ref{hyp:support}--\ref{hyp:positive} and to
$0<\gamma_a\le\gamma_b=\sigma_M<1$:

\begin{enumerate}[label=(H\arabic*),ref=(H\arabic*),start=3]
  \item\label{hyp:geom} $R<\min\{a,b\}$ and $\supp f\subset\overline{B_R}$,
        where $B_R=\{\x\in\R^2:|\x|<R\}$;
  \item\label{hyp:saturate} the layer is at least twice as wide as the
        transition, $\delta_a\ge2\delta^{\mathrm t}_a$ and $\delta_b\ge2\delta^{\mathrm t}_b$; hence
        $\sigma_1\equiv\gamma_a$ on $\{|x|\ge a+\tfrac12\delta_a\}$ and
        $\sigma_2\equiv\gamma_b$ on $\{|y|\ge b+\tfrac12\delta_b\}$, so
        that the outer half of the layer is a constant-coefficient
        region.
\end{enumerate}

\begin{rmk}\label{rmk:H5_free}
Hypothesis~\ref{hyp:saturate} costs nothing: the transition widths
$\delta^{\mathrm t}_a,\delta^{\mathrm t}_b$ are fixed data of the profile, while $\delta_a,\delta_b$
are truncation distances, and~\ref{hyp:saturate} merely asks that
truncation occur beyond twice the transition width. It is essential,
however, that $\delta^{\mathrm t}_a,\delta^{\mathrm t}_b$ be independent of $\delta_a,\delta_b$, as
stipulated in Section~\ref{sec:preliminaries}: the operator
$-c^2\widetilde\Delta$ on $\R^2$ must not change with $\delta$, because
Proposition~\ref{prop:uniform} compares the truncated problems for a
whole sequence of layer widths against \emph{one} fixed operator on the
plane. Tying the transition width to $\delta$ --- a tempting
simplification --- would make that comparison vacuous. This is also the
arrangement adopted
in~\cite{KimPasciak2010a,KimPasciak2010b}. Hypothesis~\ref{hyp:saturate}
is used wherever a reflection across $\p\Omega_\delta$ must leave the
coefficients invariant --- in
Proposition~\ref{prop:uniform} and in the $H^2$ regularity of
$\mathcal D(T_\delta)$ established there --- and, in a purely
quantitative way, in Corollary~\ref{cor:exp_rate}.
\end{rmk}

\subsection{Further notation}

Recall $k=\omega/c_\infty$ from~\eqref{eq:Phi0} and put, for $j=1,2$,
\begin{equation}\label{eq:def_tau}
  \tau_1(x)=\int_0^x\sigma_1(s)\,ds = x\tilde\sigma_1(x),
  \qquad
  \tau_2(y)=\int_0^y\sigma_2(s)\,ds = y\tilde\sigma_2(y),
\end{equation}
so that, with $z=\ri$, the stretching map of
Section~\ref{sec:preliminaries} reads
\begin{equation}\label{eq:T_as_tau}
  T(\x)=\x+\ri\,\boldsymbol\tau(\x),
  \qquad \boldsymbol\tau(\x)=(\tau_1(x),\tau_2(y)),
  \qquad \x=(x,y).
\end{equation}
For $s>0$ put
\begin{equation}\label{eq:def_Sigma}
  \Sigma_1(s)=\int_a^{a+s\delta_a}\sigma_1(t)\,dt>0,
  \qquad
  \Sigma_2(s)=\int_b^{b+s\delta_b}\sigma_2(t)\,dt>0,
\end{equation}
the total absorption accumulated across a fraction $s$ of the layer in
each direction, and
\begin{equation}
  \Omega^{(s)}=(-a-s\delta_a,a+s\delta_a)\times(-b-s\delta_b,b+s\delta_b),
\end{equation}
so that $\Omega^{(0^+)}=\Omega$ and $\Omega^{(1)}=\Omega_\delta$. The
slightly enlarged rectangle $\Omega^{(2)}$ will be used as a collar in
which interior estimates up to $\p\Omega_\delta$ can be performed. Let
$\rho:\C^2\to\C$ be the complexified distance
\begin{equation}\label{eq:def_rho}
  \rho(\mathbf z)=\left(z_1^2+z_2^2\right)^{1/2},
  \qquad \mathbf z=(z_1,z_2)\in\C^2,
\end{equation}
where the principal branch of the square root on $\C\setminus(-\infty,0]$
is used, and let
\begin{equation}\label{eq:def_Phi}
  \Phi(\mathbf z)=\frac{\ri}{4}H_0^{(1)}\bigl(k\rho(\mathbf z)\bigr)
\end{equation}
be the analytic continuation of the outgoing fundamental
solution $\Phi_0$ of~\eqref{eq:Phi0}, so that $\Phi(\x-\y)=\Phi_0(\x-\y)$
for real arguments. Finally, set
\begin{equation}\label{eq:def_Lambda}
  \Lambda_\delta=\sqrt{(a+2\delta_a)^2+(b+2\delta_b)^2}+R
                 +2\sigma_M\sqrt{\delta_a^2+\delta_b^2},
\end{equation}
and, for $s\in(0,1)$,
\begin{equation}\label{eq:def_Theta}
  \Theta(s)=\frac{1}{\Lambda_\delta}
  \min\Bigl\{\Sigma_1(s)\bigl(a+s\delta_a-R\bigr),\;
             \Sigma_2(s)\bigl(b+s\delta_b-R\bigr)\Bigr\}>0 .
\end{equation}

\subsection{Localisation of the essential spectrum}

\begin{lemma}\label{lem:reduction}
Let $-c^2\widetilde\Delta$ and $-c_\infty^2\widetilde\Delta$ be regarded
as unbounded operators on $L^2(\R^2)$ with domain $H^2(\R^2)$. Then, for
every $\lambda\in\C$, $-c^2\widetilde\Delta-\lambda I$ is Fredholm of
index zero if and only if $-c_\infty^2\widetilde\Delta-\lambda I$ is.
Consequently
\begin{equation}\label{eq:reduction}
  \Sigma_w\bigl(-c^2\widetilde\Delta\bigr)
  =\Sigma_w\bigl(-c_\infty^2\widetilde\Delta\bigr).
\end{equation}
\end{lemma}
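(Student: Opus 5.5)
The plan is to factor out the speed, so that the two operators differ only by a compactly supported multiplication acting at zeroth order, and then to invoke stability of the Fredholm index under relatively compact perturbations. Fix $\lambda\in\C$ and write
\begin{equation*}
  -c^2\widetilde\Delta-\lambda I
  = c^2\bigl(-\widetilde\Delta-\lambda c^{-2}\bigr),
  \qquad
  -c_\infty^2\widetilde\Delta-\lambda I
  = c_\infty^2\bigl(-\widetilde\Delta-\lambda c_\infty^{-2}\bigr).
\end{equation*}
Multiplication by $c^2$ is bounded on $L^2(\R^2)$, and so is its inverse, because $c^{-2}\in L^\infty$ with $c^{-2}\ge c_{\max}^{-2}$ by~\ref{hyp:positive}. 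It is therefore an isomorphism of $L^2(\R^2)$. Composing a Fredholm operator with an isomorphism on the left preserves both the Fredholm property and the index. Hence $-c^2\widetilde\Delta-\lambda I$ is Fredholm of index zero as a map $H^2(\R^2)\to L^2(\R^2)$ if and only if $P_c:=-\widetilde\Delta-\lambda c^{-2}$ is, and likewise for $P_\infty:=-\widetilde\Delta-\lambda c_\infty^{-2}$. I will regard these operators as bounded maps $H^2(\R^2)\to L^2(\R^2)$. This is equivalent to the closed-operator viewpoint with domain $H^2$, since the graph norm is equivalent to the $H^2$ norm: $\widetilde\Delta$ has smooth coefficients whose derivatives are bounded, and it is uniformly elliptic in the sense of Lemma~\ref{lemma:bounds1}, which gives the standard $H^2$ estimate on $\R^2$.

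Next I will show that the difference
\begin{equation*}
  P_c-P_\infty=-\lambda\bigl(c^{-2}-c_\infty^{-2}\bigr)=:-\lambda m
\end{equation*}
is compact from $H^2(\R^2)$ to $L^2(\R^2)$. By~\ref{hyp:support}, $m\in L^\infty$ is supported in $\overline{B_R}$. The map $u\mapsto mu$ therefore factors as restriction $H^2(\R^2)\to H^2(B_{R+1})$, followed by the compact embedding $H^2(B_{R+1})\hookrightarrow L^2(B_{R+1})$ (Rellich--Kondrachov), followed by multiplication by $m$ and extension by zero, which is bounded into $L^2(\R^2)$. The composition is compact. Since Fredholmness and index are invariant under compact perturbation, $P_c$ is Fredholm of index zero if and only if $P_\infty$ is. The equality~\eqref{eq:reduction} follows directly from the definition of $\Sigma_w$.

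The step I expect to need the most care is the first reduction. Specifically, I must make sure that both operators are genuinely treated as bounded operators between the same pair of spaces, $H^2\to L^2$, so that the domain issue does not invalidate the compact-perturbation argument. This requires the $H^2$ regularity of $\widetilde\Delta$ on the whole plane. It holds because $d_j$ is smooth, bounded away from zero, and constant outside a compact set, so the principal part has smooth coefficients whose derivatives are bounded. A local elliptic estimate combined with a partition of unity, or a direct integration by parts using the coercivity of Lemma~\ref{lemma:bounds1}(3), gives $\|u\|_{H^2}\le C(\|\widetilde\Delta u\|_{L^2}+\|u\|_{L^2})$. Here it is worth stressing that no derivative of $c$ is ever taken. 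The naive difference $-(c^2-c_\infty^2)\widetilde\Delta$ is second order and is not relatively compact, which is why the factorisation is done before subtracting.
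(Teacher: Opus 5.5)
Your proposal is correct and follows the paper's proof essentially step for step. You factor out $c^2$, which is an automorphism of $L^2(\R^2)$, note that the bracketed operators differ by the zeroth-order multiplier $\lambda(c_\infty^{-2}-c^{-2})$ supported in $\overline{B_R}$, show this multiplier is compact $H^2(\R^2)\to L^2(\R^2)$ by Rellich--Kondrachov on a ball, and invoke stability of Fredholmness and index under compact perturbations. Your remark that the graph norm is equivalent to the $H^2$ norm, so that the bounded and closed-operator viewpoints agree, is a detail the paper leaves implicit; note only that boundedness of the inverse multiplier $c^{-2}$ comes from $c\ge c_{\text{min}}$, not from the lower bound $c^{-2}\ge c_{\text{max}}^{-2}$ you cite.
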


\begin{proof}
Since $c$ is bounded above and below by positive constants, the
multiplication operators $M_{c^2}u=c^2u$ and $M_{c_\infty^2}$ are
automorphisms of $L^2(\R^2)$. Factor
\begin{equation}\label{eq:factorisation}
  -c^2\widetilde\Delta-\lambda I
  =M_{c^2}\bigl(-\widetilde\Delta-\lambda c^{-2}\bigr),
  \qquad
  -c_\infty^2\widetilde\Delta-\lambda I
  =M_{c_\infty^2}\bigl(-\widetilde\Delta-\lambda c_\infty^{-2}\bigr),
\end{equation}
both understood as operators $H^2(\R^2)\to L^2(\R^2)$. Their difference
inside the brackets is the multiplication operator
\begin{equation}
  \bigl(-\widetilde\Delta-\lambda c^{-2}\bigr)
  -\bigl(-\widetilde\Delta-\lambda c_\infty^{-2}\bigr)
  =\lambda\bigl(c_\infty^{-2}-c^{-2}\bigr),
\end{equation}
and by~\ref{hyp:support} and~\ref{hyp:positive} the function
$c_\infty^{-2}-c^{-2}$ is bounded and supported in $\overline{B_R}$; no
regularity of $c$ is needed, only the two-sided bound. The map
$u\mapsto\lambda(c_\infty^{-2}-c^{-2})u$ is therefore compact from
$H^2(\R^2)$ into $L^2(\R^2)$: a bounded sequence in $H^2(\R^2)$ is
bounded in $H^2(B)$ for a ball $B\supset\overline{B_R}$, and
$H^2(B)\hookrightarrow L^2(B)$ compactly by
Rellich--Kondrachov. Fredholmness and the index are stable under
compact perturbations, and composition with an automorphism of
$L^2(\R^2)$ changes neither. The equivalence follows
from~\eqref{eq:factorisation}, and~\eqref{eq:reduction} is a
restatement of it.
\end{proof}

\begin{rmk}\label{rmk:why_factorise}
The factorisation~\eqref{eq:factorisation} is the whole point, and it is
worth isolating why the direct comparison fails. Subtracting the two
operators as they stand gives
$\bigl(c_\infty^2-c^2\bigr)\widetilde\Delta$, a \emph{second order}
operator whose coefficients are supported in $\overline{B_R}$; such an
operator is not relatively compact with respect to
$\widetilde\Delta$, because $H^2(B)\to L^2(B)$, $u\mapsto\chi
D^2u$, is bounded but not compact. Factoring out $c^2$ moves the
compactly supported factor from the principal part to the zeroth order
term, where Rellich--Kondrachov applies. The variable speed thus drops
out of the spectral analysis entirely, although it certainly does not
drop out of the analysis of Section~\ref{sec:convergence}, where it
enters through the Cauchy data of the physical solution on $\p B_R$.
\end{rmk}

Let
\begin{equation}\label{eq:def_sector}
  S=\Bigl\{z\in\C\setminus\{0\}:\ -2\Arg(1+\ri\gamma_b)\le\Arg z
  \le-2\Arg(1+\ri\gamma_a)\Bigr\}\cup\{0\},
\end{equation}
the closed sector spanned by the two rays
$\{\xi^2/(1+\ri\gamma_a)^2:\xi\in\R\}$ and
$\{\eta^2/(1+\ri\gamma_b)^2:\eta\in\R\}$; it degenerates to a single ray
when $\gamma_a=\gamma_b$.

\begin{pro}\label{prop:sector}
$\Sigma_w\bigl(-c^2\widetilde\Delta\bigr)=S$. In particular
\begin{equation}
  S\cap[0,\infty)=\{0\},
\end{equation}
so that for every $\omega>0$ the operator
$-c^2\widetilde\Delta-\omega^2I:H^2(\R^2)\to L^2(\R^2)$ is Fredholm of
index zero.
\end{pro}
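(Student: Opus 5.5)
By Lemma~\ref{lem:reduction} it suffices to prove $\Sigma_w(-c_\infty^2\widetilde\Delta)=S$. Since $-c_\infty^2\widetilde\Delta-\lambda I=M_{c_\infty^2}(-\widetilde\Delta-\lambda c_\infty^{-2})$ and $M_{c_\infty^2}$ is a scalar automorphism, this reduces further to the identity $\Sigma_w(-\widetilde\Delta)=c_\infty^{-2}S$. Because $S$ is a closed cone (a sector with vertex at the origin, plus $\{0\}$), $c_\infty^{-2}S=S$, so the claim becomes $\Sigma_w(-\widetilde\Delta)=S$ for the constant-background Cartesian PML operator.

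That identity is the essential-spectrum theorem of Kim and Pasciak~\cite{KimPasciak2010a} for the Cartesian PML operator on $\R^2$. I would invoke it, checking that its hypotheses hold in our setting:
\begin{itemize}
\item the profiles $d_j=1+\ri\sigma_j$ are smooth;
\item each $d_j$ equals the constant $1+\ri\gamma_j$ outside a compact set, as noted after~\eqref{eq:PML_thickness};
\item the strengths satisfy $0<\gamma_a\le\gamma_b<1$.
\end{itemize}

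To make the form of $S$ transparent, I would also sketch the mechanism. Outside a large square, $-\widetilde\Delta$ coincides with the constant-coefficient operator
\[
-(1+\ri\gamma_a)^{-2}\p_x^2-(1+\ri\gamma_b)^{-2}\p_y^2
\]
in the corner regions, and with mixed operators (one variable stretched, one not, or partially stretched) in the strips. The symbols $\xi^2/(1+\ri\gamma_a)^2+\eta^2/(1+\ri\gamma_b)^2$ sweep out exactly the closed sector $S$. Weyl sequences localised far out in a corner give $S\subset\Sigma_w$.

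For the converse inclusion, a parametrix or coercivity argument is used. For $\lambda\notin S$ there is a rotation $e^{-\ri\theta}$ making $\Re\bigl(e^{-\ri\theta}(\text{symbol}-\lambda)\bigr)$ positive at infinity, in the spirit of Lemma~\ref{lm:well-possed}. Gluing local inverses then yields a Fredholm operator, and a homotopy in $\lambda$ within the connected set $\C\setminus S$ to the point $w_0$ of Lemma~\ref{lm:well-possed} gives index zero. Here $\C\setminus S$ is connected because $S$ is a proper sector, and the operator at $w_0$ is invertible. I expect this gluing/strip analysis to be the main obstacle if done from scratch, which is why I would cite~\cite{KimPasciak2010a} rather than reprove it.

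Finally, for $\omega>0$ the point $\omega^2$ must be shown to lie outside $S$. The sector consists of arguments in $[-2\Arg(1+\ri\gamma_b),-2\Arg(1+\ri\gamma_a)]$. Since $\gamma_a>0$, the upper endpoint satisfies $-2\Arg(1+\ri\gamma_a)<0$, and since $\gamma_b<1$, the lower endpoint satisfies $-2\Arg(1+\ri\gamma_b)>-\pi/2$. Positive reals have argument $0$, which lies outside this interval, so $S\cap[0,\infty)=\{0\}$.

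Hence $\omega^2\notin\Sigma_w(-c^2\widetilde\Delta)$, which is the Fredholm-of-index-zero statement.
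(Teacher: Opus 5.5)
Your route is the paper's: Lemma~\ref{lem:reduction} to pass to $c_\infty$, the cone property $c_\infty^{2}S=S$, the citation of~\cite{KimPasciak2010a}, and the argument count $-2\Arg(1+\ri\gamma_j)\in(-\pi/2,0)$ giving $S\cap[0,\infty)=\{0\}$.

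The one step the paper makes explicit, and you elide, is the passage from Kim and Pasciak's result to $\Sigma_w$. Their Thm.~4.5 identifies the \emph{whole} spectrum, $\sigma(-\widetilde\Delta)=S$. So for $\lambda\notin S$ the operator is boundedly invertible, and therefore trivially Fredholm of index zero. The corner Weyl sequences then give $S\subseteq\Sigma_w$.

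No parametrix, gluing or homotopy is needed. That matters, because the mechanism you sketch for $\Sigma_w\subseteq S$ fails as stated. In the strips $\{|x|\text{ large},\ |y|\le b\}$ the frozen symbol $\xi^2/(1+\ri\gamma_a)^2+\eta^2$ takes every positive real value (put $\xi=0$). Hence no rotation $e^{-\ri\theta}$ makes $\Re\bigl(e^{-\ri\theta}(\text{symbol}-\omega^2)\bigr)$ positive there. What controls the strips is instead the tensor structure with the one-dimensional PML operator in $y$, whose spectrum is the ray $\{\eta^2/(1+\ri\gamma_b)^2\}$ rather than $[0,\infty)$ (Remark~\ref{rmk:limit_symbol}).

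Since your proof rests on the citation and not on the sketch, it stands. State the invertibility-off-$S$ step explicitly rather than relying on the sketched coercivity argument.
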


\begin{proof}
For the homogeneous operator, $-c_\infty^2\widetilde\Delta
=c_\infty^2\bigl(-\widetilde\Delta\bigr)$, and
\cite[Thm.~4.5]{KimPasciak2010a} gives
$\sigma\bigl(-\widetilde\Delta\bigr)
=\sigma_{\mathrm{ess}}\bigl(-\widetilde\Delta\bigr)=S$ on $L^2(\R^2)$
with domain $H^2(\R^2)$. As $S$ is a cone, $c_\infty^2S=S$, so
$\sigma\bigl(-c_\infty^2\widetilde\Delta\bigr)=S$.

We must pass from $\sigma$ to $\Sigma_w$. If $\lambda\notin S$ then
$-c_\infty^2\widetilde\Delta-\lambda I$ is boundedly invertible, hence
Fredholm of index zero, so
$\Sigma_w\bigl(-c_\infty^2\widetilde\Delta\bigr)\subseteq S$.
Conversely, for every $\lambda\in S$ the explicit construction
of~\cite[Rem.~4.7]{KimPasciak2010a} --- cutting off the exponentials
$e^{\ri[\xi x/(1+\ri\gamma_a)+\eta y/(1+\ri\gamma_b)]}$, whose symbol value
$\xi^2/(1+\ri\gamma_a)^2+\eta^2/(1+\ri\gamma_b)^2$ sweeps all of $S$ ---
produces a normalised sequence $u_n$ with $u_n\rightharpoonup0$ and
$\bigl(-\widetilde\Delta-\lambda\bigr)u_n\to0$ in $L^2(\R^2)$. (The
cut-offs must escape to infinity inside one of the four corner regions
$\{|x|\ge a+\delta^{\mathrm t}_a\}\cap\{|y|\ge b+\delta^{\mathrm t}_b\}$, where
both profiles are saturated and the exponential is an exact solution;
this is where the Cartesian geometry enters.) An operator admitting such
a singular sequence is not semi-Fredholm, so
$S\subseteq\Sigma_w\bigl(-\widetilde\Delta\bigr)$, and multiplying by
$c_\infty^2$ and using $c_\infty^2S=S$ gives
$S\subseteq\Sigma_w\bigl(-c_\infty^2\widetilde\Delta\bigr)$.
Lemma~\ref{lem:reduction} transfers the resulting equality to
$-c^2\widetilde\Delta$.

Finally, $0<\gamma_a\le\gamma_b<1$ gives
$-2\Arg(1+\ri\gamma_j)\in(-\pi/2,0)$, so every nonzero point of $S$ has
strictly negative argument and $\omega^2\notin S$ for $\omega>0$.
\end{proof}

\begin{rmk}\label{rmk:limit_symbol}
It is worth recording why $S$ cannot be read off from the pointwise
behaviour of the symbol at infinity. Along the $x$-axis one has
$\sigma_2(y)=0$ for $|y|\le b$, so the naive limit symbol as
$|x|\to\infty$ with $y$ bounded would be
$c_\infty^2\bigl(\xi^2/(1+\ri\gamma_a)^2+\eta^2\bigr)$, whose range meets
$(0,\infty)$ and would wrongly suggest that real frequencies belong to
the essential spectrum. What rescues the situation is that the
directional limit is not a multiplication operator but the tensor sum
$A\otimes I+I\otimes B$, in which $B$ is the \emph{one-dimensional} PML
operator in $y$; the spectrum of $B$ is the ray
$\{\eta^2/(1+\ri\gamma_b)^2\}$ and not $[0,\infty)$, because $\sigma_2$ is
eventually equal to $\gamma_b$ and the one-dimensional operator has no
eigenvalues~\cite[Prop.~3.2]{KimPasciak2010a}. The tensor structure,
not the pointwise symbol, is what produces $S$.
\end{rmk}

\begin{rmk}\label{rmk:halla}
That $\Sigma_w$ should sit in the closed lower half plane, meeting
$[0,\infty)$ only at the origin, is a feature of the configuration and
not a general fact about absorbing layers. Halla, Kachanovska and
Wess~\cite{HallaKachanovskaWess2025} show that the radial PML for the
\emph{anisotropic} wave equation has non-empty essential spectrum in the
right half plane, and that this is the mechanism behind
discretisation-dependent instabilities observed in practice; the
threshold in their analysis is governed by the anisotropy ratio
$(\lambda_{\max}+\lambda_{\min})/2\sqrt{\lambda_{\max}\lambda_{\min}}$,
which degenerates to $1$ in the isotropic case. Here the medium is
isotropic and the stretching, although separable and therefore
directionally inhomogeneous, acts on a scalar Helmholtz operator; the
sector $S$ of~\eqref{eq:def_sector} is computed explicitly
in~\cite[Thm.~4.5]{KimPasciak2010a} and its two boundary rays have
argument $-2\Arg(1+\ri\gamma_j)\in(-\pi/2,0)$. By
Lemma~\ref{lem:reduction} a variable speed does not move it.
\end{rmk}

\subsection{Injectivity at real frequencies}

Fredholmness of index zero reduces invertibility to injectivity, which we
now establish for every $\omega>0$. The argument is that
of~\cite[Prop.~4.5]{KimPasciak2010b}: a nontrivial kernel element would,
through the integral representation, produce a nontrivial outgoing
solution of the physical problem.

\begin{pro}\label{prop:injective}
Assume~\ref{hyp:support}, \ref{hyp:positive} and~\ref{hyp:geom}, and
let $\omega>0$. If
$u\in H^2(\R^2)$ satisfies $-c^2\widetilde\Delta u=\omega^2u$, then
$u=0$. Consequently $\omega^2\in\rho\bigl(-c^2\widetilde\Delta\bigr)$
for every $\omega>0$.
\end{pro}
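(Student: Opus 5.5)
Let $u\in H^2(\R^2)$ with $-c^2\widetilde\Delta u=\omega^2u$. The idea is to show that $u$, restricted to the physical region, is the PML image of an outgoing solution of the physical problem. Proposition~\ref{prop:physical}(\ref{it:phys_uniq}) then forces that solution to vanish, and the vanishing propagates back through the layer.

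\textbf{Step 1: the equation in $\Omega$ and in the exterior.} On $\Omega$ we have $\sigma_1=\sigma_2=0$, so $\widetilde\Delta=\Delta$ and $u$ solves $-c^2\Delta u-\omega^2u=0$ weakly there. On $\{|\x|>R\}$ we have $c\equiv c_\infty$ by \ref{hyp:support}, so $u$ solves the homogeneous stretched Helmholtz equation $-\widetilde\Delta u-k^2u=0$ with $k=\omega/c_\infty$. Since $u\in H^2(\R^2)$, it decays in the $L^2$ sense at infinity.

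\textbf{Step 2: representation outside $B_R$.} By \ref{hyp:geom}, $\overline{B_R}\subset\Omega$. We apply the representation formula of \cite[Thm.~4.3]{KimPasciak2010b} on the exterior of a circle $\p B_{R'}$ with $R<R'<\min\{a,b\}$. Because the medium is homogeneous in this region, the theorem applies verbatim. For $|\x|>R'$ it gives
\[
u(\x)=\int_{\p B_{R'}}\Bigl(\p_{\n_\y}\Phi(T(\x)-\y)\,u(\y)-\Phi(T(\x)-\y)\,\p_{\n}u(\y)\Bigr)\,ds_\y .
\]
Here $T(\y)=\y$ on $\p B_{R'}$. The point of this formula is that the $L^2$ decay of $u$ rules out any incoming component, which is exactly what the Kim--Pasciak argument supplies.

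\textbf{Step 3: the physical extension.} Define $v$ to equal $u$ on $B_{R'}$ and, for $|\x|>R'$, to be given by the same boundary integral with $T(\x)$ replaced by $\x$, i.e.\ with $\Phi_0(\x-\y)$. This $v$ is outgoing and solves $\Delta v+k^2v=0$ outside $B_{R'}$. On $\Omega\setminus\overline{B_{R'}}$ the map $T$ is the identity, so $v=u$ there. Hence $v$ has no jump across $\p B_{R'}$, and $v\in H^1_{\mathrm{loc}}$ satisfies the weak equation \eqref{eq:weak_physical} with $f=0$ on all of $\R^2$.

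\textbf{Step 4: conclusion.} Proposition~\ref{prop:physical}(\ref{it:phys_uniq}) gives $v\equiv0$, so $u=0$ on $\Omega$, and in particular the Cauchy data of $u$ on $\p B_{R'}$ vanish. The representation formula of Step~2 then gives $u=0$ on $\{|\x|>R'\}$, hence $u\equiv0$. (Alternatively, $u$ is analytic in each smooth region where it solves the stretched equation, and unique continuation from the open set $\Omega\setminus\overline{B_{R'}}$ gives the same conclusion.)

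\textbf{Step 5: invertibility.} Proposition~\ref{prop:sector} says $-c^2\widetilde\Delta-\omega^2$ is Fredholm of index zero. Together with injectivity this makes it bijective, and bounded inverse follows from the open mapping theorem. Thus $\omega^2\in\rho(-c^2\widetilde\Delta)$.

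\textbf{Main obstacle.} The hardest step is Step~2: deriving the representation formula from $H^2$ decay alone, and showing that the integral has no incoming part. This is the content of \cite[Thm.~4.3, Prop.~4.5]{KimPasciak2010b}. The plan is to quote it verbatim, checking only that it needs $c$ constant near and outside the integration circle, which \ref{hyp:geom} guarantees.
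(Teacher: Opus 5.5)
Your proposal is correct and follows the paper's proof essentially step for step. The shared steps are the Kim--Pasciak representation on $\p B_{R'}$ with $R<R'<\min\{a,b\}$, the unstretched extension $v$ that coincides with $u$ wherever $T=\mathrm{id}$, uniqueness from Proposition~\ref{prop:physical}(\ref{it:phys_uniq}), the vanishing Cauchy data fed back into the representation formula, and bijectivity from the Fredholm property of Proposition~\ref{prop:sector}. The parenthetical unique-continuation alternative in Step~4 is not needed and would need more care as stated: the profiles $\sigma_j$ are not analytic across the transition zone, so you would have to invoke a Calder\'on-type uniqueness theorem for the complex-coefficient operator $\widetilde\Delta+k^2$ rather than analyticity.
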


\begin{proof}
Fix $R'$ with $R<R'<\min\{a,b\}$, which is possible by~\ref{hyp:geom}.
On $\R^2\setminus\overline{B_R}$ we have $c\equiv c_\infty$, so $u$
satisfies there the homogeneous stretched equation
$\bigl(\widetilde\Delta+k^2\bigr)u=0$, $k=\omega/c_\infty$, with
$u\in H^1$; this is precisely the situation
of~\cite[Thm.~4.3]{KimPasciak2010b}, applied with $B_R$ in the role of
the scatterer and $B_{R'}$ in the role of the intermediate domain.
That theorem is a purely exterior statement: it uses only that
$u\in H^1$ and $(\widetilde\Delta+k^2)u=0$ on the complement of the
scatterer, not the equation inside it, so the variable speed on $B_R$ is
immaterial. It gives, for $\x\in\R^2\setminus\overline{B_{R'}}$, with $\n$
the unit normal on $\p B_{R'}$ pointing away from the origin,
\begin{equation}\label{eq:rep_infinite}
  u(\x)=\int_{\p B_{R'}}
  \left[u(\y)\,\frac{\p\Phi}{\p\n(\y)}\bigl(T(\x)-\y\bigr)
        -\frac{\p u}{\p\n}(\y)\,\Phi\bigl(T(\x)-\y\bigr)\right]ds(\y),
\end{equation}

Define $v$ on $\R^2$ by $v=u$ on $\overline{B_{R'}}$ and, for
$|\x|>R'$, by the right-hand side of~\eqref{eq:rep_infinite} with
$T(\x)$ replaced by $\x$, that is, with the unstretched fundamental
solution $\Phi_0(\x-\y)=\tfrac{\ri}{4}H_0^{(1)}(k|\x-\y|)$. Since
$\sigma_1$ and $\sigma_2$ vanish on $\overline\Omega$, we have
$T=\mathrm{id}$ there, and $\overline{B_{R'}}$ is compactly contained in
$\Omega$ by~\ref{hyp:geom}; hence $\Phi(T(\x)-\y)=\Phi_0(\x-\y)$ for
$\x$ in the annulus $R'<|\x|<\min\{a,b\}$, so the two definitions of
$v$ coincide there and $v$ is smooth across $\p B_{R'}$. Moreover $v$
is, by construction, a radiating solution of
$\Delta v+k^2v=0$ for $|\x|>R'$, and satisfies
$-c^2\Delta v-\omega^2v=0$ on $B_{R'}$ because $u$ does and $T=\mathrm{id}$
there. Hence $v$ solves
\begin{equation}
  -c^2\Delta v-\omega^2v=0 \ \text{ in }\R^2,
  \qquad
  \lim_{r\to\infty}r^{1/2}\Bigl(c\,\frac{\p v}{\p r}-\ri\omega v\Bigr)=0 .
\end{equation}
By Proposition~\ref{prop:physical}(\ref{it:phys_uniq}), $v\equiv0$. In
particular $u$ and $\p u/\p\n$ vanish on $\p B_{R'}$, and
then~\eqref{eq:rep_infinite} gives $u\equiv0$ on
$\R^2\setminus\overline{B_{R'}}$; together with $u=v=0$ on $B_{R'}$ this
yields $u=0$.

By Proposition~\ref{prop:sector}, $-c^2\widetilde\Delta-\omega^2I$ is
Fredholm of index zero; being injective, it is bijective with bounded
inverse, i.e.\ $\omega^2\in\rho\bigl(-c^2\widetilde\Delta\bigr)$.
\end{proof}

\begin{rmk}
The only input from the physical problem is the uniqueness statement
Proposition~\ref{prop:physical}(\ref{it:phys_uniq}), which rests on
Rellich's lemma and on unique continuation for a Schr\"odinger operator
with bounded potential. Neither a non-trapping hypothesis nor any
regularity of $c$ is involved.
\end{rmk}

\subsection{Stability of the truncated problem, uniformly in the layer width}
\label{subsec:uniform}

We now transfer the infinite-domain result to $\Omega_\delta$. The
argument is the reflection argument
of~\cite[Thm.~4.8]{KimPasciak2010a}; we indicate the modifications
required by the variable speed, all of which are innocuous because
$c\equiv c_\infty$ near $\p\Omega_\delta$.

\begin{pro}\label{prop:uniform}
Assume~\ref{hyp:support}, \ref{hyp:positive}, \ref{hyp:geom}
and~\ref{hyp:saturate}, and let $\omega>0$. Then there exist
$\delta_0=\delta_0(\omega)>0$ and $C_\star=C_\star(\omega)$ such that,
whenever $\delta_a,\delta_b\ge\delta_0$,
\begin{equation}\label{eq:uniform_infsup}
  \|u\|_{H^1(\Omega_\delta)}\ \le\ C_\star\!\!
  \sup_{0\ne\varphi\in H^1_0(\Omega_\delta)}
  \frac{\bigl|\mathcal A_{\Omega_\delta,\omega^2}(u,\varphi)\bigr|}
       {\|\varphi\|_{H^1(\Omega_\delta)}}
  \qquad\text{for all }u\in H^1_0(\Omega_\delta),
\end{equation}
and the same holds for the adjoint form. In particular
$\omega^2\notin\sigma_p(T_\delta)$, and the constant $C_\delta(\omega)$
of Lemma~\ref{lem:infsup} satisfies $C_\delta(\omega)\le C_\star(\omega)$
for all such $\delta$.
\end{pro}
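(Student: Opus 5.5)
The plan is a proof by contradiction combined with a compactness argument, following the reflection strategy of \cite[Thm.~4.8]{KimPasciak2010a}.

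\textbf{Setting up the contradiction.} Suppose \eqref{eq:uniform_infsup} fails for every $\delta_0$ and every constant. Then there are layer widths $\delta^{(n)}=(\delta_a^{(n)},\delta_b^{(n)})$ with $\min\{\delta_a^{(n)},\delta_b^{(n)}\}\to\infty$, and functions $u_n\in H^1_0(\Omega_{\delta^{(n)}})$, such that
\[
\|u_n\|_{H^1}=1,\qquad \sup_{\varphi}\frac{|\mathcal A_{\Omega_{\delta^{(n)}},\omega^2}(u_n,\varphi)|}{\|\varphi\|_{H^1}}\to0 .
\]
(If the widths stayed bounded along some subsequence, one would argue on a fixed family instead; the statement only requires large widths.)

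\textbf{Passing to the whole plane by reflection.} By \ref{hyp:saturate}, the coefficients $d_1,d_2$ and $c\equiv c_\infty$ are constant on the outer half of the layer. We therefore extend $u_n$ across $\p\Omega_{\delta^{(n)}}$ by odd reflection. This preserves the equation in the constant-coefficient collar, and then a cut-off $\chi_n$ produces a function $w_n\in H^1(\R^2)$ with the following properties.
\begin{itemize}
\item[(a)] $\|w_n\|_{H^1}\ge c>0$.
\item[(b)] The residual satisfies $\|(-\widetilde\Delta-\omega^2c^{-2})w_n\|_{H^{-1}(\R^2)}\to0$, up to commutator terms $[\widetilde\Delta,\chi_n]$ supported in the outer layer region.
\end{itemize}
The cut-off and the odd extension make the boundary contributions vanish. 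The commutator terms are of size $O(1/\delta^{(n)})$ once $\chi_n$ varies on the scale $\delta^{(n)}$, so they tend to zero.

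\textbf{Using invertibility on the plane.} By Proposition~\ref{prop:injective}, $-c^2\widetilde\Delta-\omega^2$ is boundedly invertible. By duality, and since $c^2$ is bounded above and below, the form $\mathcal A_{\R^2,\omega^2}$ satisfies an inf--sup bound on $H^1(\R^2)$. Here we need invertibility at the $H^1\to H^{-1}$ level, obtained from the $H^2\to L^2$ statement by the same Fredholm argument applied to the form, via Lemma~\ref{lem:reduction} at the form level. This gives $\|w_n\|_{H^1}\to0$.

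\textbf{Conclusion and obstacle.} The previous step contradicts (a) provided the reflection does not destroy the mass. If most of the $H^1$ mass of $u_n$ lies far out in the layer, then (a) holds trivially for the extension. If instead it concentrates near $\p\Omega$, then the localized functions $\chi u_n$ converge weakly to a kernel element on $\R^2$, which vanishes by Proposition~\ref{prop:injective}. The real difficulty is the intermediate regime, where $u_n$ concentrates in the transition/corner regions and escapes to infinity along the layer. There the Cartesian essential spectrum matters, and we use Proposition~\ref{prop:sector}: $\omega^2\notin S$ gives a uniform lower bound for the constant-coefficient operator in each corner and strip region, and hence excludes such escaping sequences.

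The adjoint form is handled identically, since its kernel on $\R^2$ also vanishes by Fredholm index zero. Finally, the inf--sup bound implies that the kernel is trivial, so $\omega^2\notin\sigma_p(T_\delta)$, and the constant $C_\delta(\omega)$ of Lemma~\ref{lem:infsup} is bounded by $C_\star(\omega)$ directly from its definition.

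I expect this last step, controlling mass that escapes within the layer for uniformly large but variable widths, to be the main obstacle. The first thing I would try is a partition of unity into interior, strip and corner pieces, together with the coercivity of the rotated form (Lemma~\ref{lm:well-possed}-type estimates) in the saturated corners.
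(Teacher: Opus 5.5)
Your outline is the paper's argument moved from the $L^2$ resolvent bound to the $H^1/H^{-1}$ level. The ingredients match: a contradiction sequence with widths tending to infinity, odd reflection into the saturated half of the layer, a cut-off on the scale of the layer width, and a contradiction with the invertibility of the one fixed operator on $\R^2$. As written, however, the proof does not close. You leave open the step you call the main obstacle, and that obstacle is not real; what is missing is only the choice of $\chi_n$.

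Reflect $u_n$ only into $\Omega^{(3/2)}$, where by \ref{hyp:saturate} and \ref{hyp:geom} the coefficients $d_2/d_1$, $d_1/d_2$ and $J$ are invariant under the reflections. Then take $\chi_n\in C_c^\infty(\Omega^{(3/2)})$ with $\chi_n\equiv1$ on $\overline{\Omega_{\delta^{(n)}}}$ and $\|\na\chi_n\|_{L^\infty}\le C/\delta_m^{(n)}$, where $\delta_m^{(n)}=\min\{\delta_a^{(n)},\delta_b^{(n)}\}$. The transition of $\chi_n$ then takes place entirely in the reflected collar of width $\tfrac12\delta_m^{(n)}$. Consequently $w_n=u_n$ on $\Omega_{\delta^{(n)}}$, and $\|w_n\|_{H^1(\R^2)}\ge\|u_n\|_{H^1(\Omega_{\delta^{(n)}})}=1$ however the mass of $u_n$ is distributed.

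Mass drifting along the strips or into the corners needs no separate treatment either. The plane inf--sup bound is one global estimate for a fixed operator on all of $\R^2$, and it already encodes Proposition~\ref{prop:sector} (Fredholm of index zero) together with Proposition~\ref{prop:injective}. So (a), (b) and the plane bound give $1\le\|w_n\|_{H^1}\to0$ at once, with no case distinction.

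The fallback you sketch would be genuinely hard. At $w=\omega^2$ the form is not coercive on the unbounded strips $\{|x|>a,\ |y|<b\}$, which is precisely the Cartesian difficulty. Lemma~\ref{lm:well-possed} gives coercivity only at $w_0=-e^{-\ri\theta}$, not at $\omega^2$. Likewise, a vanishing weak limit of $\chi u_n$ gives no contradiction without strong local compactness.

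Once this is repaired, your form-level version is a legitimate and slightly leaner variant of the paper's. The paper normalises $\|u_n\|_{L^2}=1$ and contradicts \eqref{eq:plane_resolvent}. This forces it to prove $\mathcal D(T_\delta)\subset H^2(\Omega_\delta)$ and to invoke \cite[Lem.~2.5, Cor.~2.7, Rem.~2.8]{KimPasciak2010a} to return to inf--sup. At the form level, odd reflection preserves the weak equation directly (fold the test function). The commutator $\mathcal A(\chi_n\tilde u_n,\varphi)-\mathcal A(\tilde u_n,\chi_n\varphi)$ then involves only $\na\chi_n$.

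What you must supply instead is the $H^1\to H^{-1}$ isomorphism on the plane, which ``by duality'' does not justify. It does follow from the paper:
\begin{itemize}
\item Write $L_w:H^1(\R^2)\to H^{-1}(\R^2)$ for the operator of $\mathcal A_{\R^2,w}$ and set $\mu=\omega^2-w_0$. Then $L_{\omega^2}=L_{w_0}(I-\mu K)$ with $K=L_{w_0}^{-1}J$.
\item By Proposition~\ref{prop:resolvent1}, $K=(-c^2\widetilde\Delta-w_0)^{-1}|_{H^1}$.
\item The resolvent identity gives $(I-\mu K)^{-1}=I+\mu(-c^2\widetilde\Delta-\omega^2)^{-1}$.
\item This maps $H^1$ into itself because $(-c^2\widetilde\Delta-\omega^2)^{-1}:L^2\to H^2$ is bounded by Proposition~\ref{prop:injective}.
\end{itemize}
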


\begin{proof}
Throughout, $-c^2\widetilde\Delta$ denotes \emph{one fixed} operator on
$L^2(\R^2)$: by the convention of Section~\ref{sec:preliminaries} the
profiles $\sigma_1,\sigma_2$ are determined by the fixed transition
widths $\delta^{\mathrm t}_a,\delta^{\mathrm t}_b$ and do not change when $\delta_a,\delta_b$ vary.
By Proposition~\ref{prop:injective},
$\omega^2\in\rho\bigl(-c^2\widetilde\Delta\bigr)$, so
\begin{equation}\label{eq:plane_resolvent}
  \|w\|_{L^2(\R^2)}\le C_\infty
  \bigl\|(-c^2\widetilde\Delta-\omega^2)w\bigr\|_{L^2(\R^2)}
  \qquad\text{for all }w\in H^2(\R^2),
\end{equation}
with $C_\infty=C_\infty(\omega)$ independent of everything else.

By~\cite[Lem.~2.5, Cor.~2.7 and Rem.~2.8]{KimPasciak2010a}, whose proofs use only
the coercivity of $\mathcal A_{\cdot,w_0}$ established in
Lemma~\ref{lm:well-possed} --- the role of $d_1d_2$ there being played
here by $J=c^{-2}d_1d_2$, which is bounded above and away from zero by
Lemma~\ref{lemma:bounds1}(2) --- it suffices to prove the uniform
resolvent bound
\begin{equation}\label{eq:uniform_resolvent}
  \|v\|_{L^2(\Omega_\delta)}\le C\,
  \bigl\|(-c^2\widetilde\Delta-\omega^2)v\bigr\|_{L^2(\Omega_\delta)},
  \qquad v\in\mathcal D(T_\delta),
\end{equation}
for $\delta_a,\delta_b\ge\delta_0$, with $C$ independent of $\delta$.
Note that~\eqref{eq:uniform_resolvent} yields not only the bound but
also the invertibility required in~\cite[Rem.~2.8]{KimPasciak2010a}:
by Corollary~\ref{cor:discrete_spectrum} the operator
$T_\delta-\omega^2I$ is Fredholm of index zero, and
\eqref{eq:uniform_resolvent} makes it injective, hence bijective with
$\|(T_\delta-\omega^2I)^{-1}\|\le C$.
The statement for the adjoint form follows from~\eqref{eq:uniform_infsup}
because the coefficients of $\mathcal A_{\Omega_\delta,\omega^2}$ are
complex symmetric, so that the adjoint problem is the complex conjugate
of the original one.

We first record that $\mathcal D(T_\delta)\subset H^2(\Omega_\delta)$.
This does not follow from the usual convex-domain regularity theory,
which assumes real coefficients, but it follows from the same reflection
that we are about to use. Let $u\in\mathcal D(T_\delta)$, so
$u\in H^1_0(\Omega_\delta)$ and $\widetilde\Delta u\in L^2$. Extend $u$ to
$\Omega^{(3/2)}$ by odd reflection across the four sides and then into
the corners. By~\ref{hyp:saturate} and~\ref{hyp:geom} the coefficients
are invariant under each of these reflections (see the next paragraph),
and odd reflection of an $H^1_0$ function across a flat piece of
boundary preserves the weak formulation; hence the extension is a weak
solution of the same equation on $\Omega^{(3/2)}$, with right-hand side
in $L^2(\Omega^{(3/2)})$. Interior elliptic regularity now applies: on any
open $W$, if $v\in H^1(W)$ and $\widetilde\Delta v\in L^2(W)$ then
$v\in H^2_{\mathrm{loc}}(W)$, because the principal coefficients
$\mathrm{diag}(d_2/d_1,d_1/d_2)$ are $C^\infty$ with bounded
derivatives, depend on $\sigma_1,\sigma_2$ alone, and are uniformly
elliptic in the sense of Lemma~\ref{lemma:bounds1}(3); the propagation
speed does not enter. The
proof is the difference-quotient argument of
Proposition~\ref{prop:resolvent1} run against the test function
$D_k^{-h}(\zeta^2D_k^hv)$ for a cut-off $\zeta$, the extra terms
carrying $\na\zeta$ being absorbed by Young's inequality. This gives
$H^2_{\mathrm{loc}}(\Omega^{(3/2)})$ regularity, and since
$\overline{\Omega_\delta}$ is a compact subset of $\Omega^{(3/2)}$ we
conclude $u\in H^2(\Omega_\delta)$.

Suppose~\eqref{eq:uniform_resolvent} fails. Then there are layer widths
$\delta^{(n)}$ with $\delta^{(n)}_a,\delta^{(n)}_b\to\infty$ and
functions $u_n\in\mathcal D(T_{\delta^{(n)}})\subset
H^2(\Omega_{\delta^{(n)}})\cap H^1_0(\Omega_{\delta^{(n)}})$ with
\begin{equation}\label{eq:weyl_seed}
  \|u_n\|_{L^2(\Omega_{\delta^{(n)}})}=1,
  \qquad
  \bigl\|(-c^2\widetilde\Delta-\omega^2)u_n\bigr\|_{L^2(\Omega_{\delta^{(n)}})}
  \le\frac1n ;
\end{equation}
if for some $n$ the operator $T_{\delta^{(n)}}-\omega^2I$ is not
injective we simply take for $u_n$ a normalised element of its kernel,
for which the second quantity vanishes.

\emph{Reflection.} Extend $u_n$ to $\Omega^{(3/2)}$ by odd reflection
across the four sides of $\p\Omega_\delta$, followed by a second odd
reflection into the four corner regions, as
in~\cite[Fig.~2]{KimPasciak2010a}; the corner values are independent of
the order of the two reflections. Since $u_n\in H^2$ vanishes on
$\p\Omega_\delta$ together with its tangential derivative, the extension
$\tilde u_n$ lies in $H^2(\Omega^{(3/2)})$. The reflection across the
side $x=a+\delta_a$ acts by $(x,y)\mapsto(2(a+\delta_a)-x,\,y)$, so it
leaves $\sigma_2$ untouched, while a point with
$x\ge a+\tfrac12\delta_a$ and its image both satisfy
$\sigma_1=\gamma_a$ by~\ref{hyp:saturate}; both also lie outside
$\overline{B_R}$, because $a+\tfrac12\delta_a>a>R$ by~\ref{hyp:geom},
so that $c\equiv c_\infty$ at both. All coefficients of
$-c^2\widetilde\Delta$ are therefore invariant under the reflection, and
$(-c^2\widetilde\Delta-\omega^2)\tilde u_n$ at a reflected point equals
$\pm(-c^2\widetilde\Delta-\omega^2)u_n$ at its preimage. The same holds
for the horizontal sides and, by composition, for the corner regions,
where both $|x|$ and $|y|$ are large. Since $\Omega^{(3/2)}$ is covered
by finitely many isometric copies of $\Omega_\delta$,
\begin{equation}\label{eq:refl_bound}
  \bigl\|(-c^2\widetilde\Delta-\omega^2)\tilde u_n
  \bigr\|_{L^2(\Omega^{(3/2)})}\le\frac{C}{n}.
\end{equation}

\emph{Cut-off.} Write $\delta_m^{(n)}=\min\{\delta^{(n)}_a,\delta^{(n)}_b\}$
and let $\chi_n\in C_c^\infty(\Omega^{(3/2)})$ with $0\le\chi_n\le1$,
$\chi_n\equiv1$ on $\overline{\Omega_\delta}$,
$\|\na\chi_n\|_{L^\infty}\le C/\delta_m^{(n)}$ and
$\|D^2\chi_n\|_{L^\infty}\le C/(\delta_m^{(n)})^2$. Set
$w_n=\chi_n\tilde u_n$, extended by zero; then $w_n\in H^2(\R^2)$ and
$\|w_n\|_{L^2(\R^2)}\ge\|u_n\|_{L^2(\Omega_{\delta^{(n)}})}=1$. Moreover
\begin{equation}
  \bigl\|(-c^2\widetilde\Delta-\omega^2)w_n\bigr\|_{L^2(\R^2)}
  \le\bigl\|[\,c^2\widetilde\Delta,\chi_n]\,\tilde u_n\bigr\|_{L^2}
   +\bigl\|\chi_n(-c^2\widetilde\Delta-\omega^2)\tilde u_n\bigr\|_{L^2},
\end{equation}
the second term being $O(1/n)$ by~\eqref{eq:refl_bound}. In the
commutator only terms carrying at least one derivative of $\chi_n$
survive, so, the coefficients $d_j^{\pm1}$, $d_j'$ and $c$ being
bounded,
\begin{equation}
  \bigl\|[\,c^2\widetilde\Delta,\chi_n]\,\tilde u_n\bigr\|_{L^2}
  \le C\left(\frac{1}{\delta_m^{(n)}}
  +\frac{1}{(\delta_m^{(n)})^2}\right)\|\tilde u_n\|_{H^1}.
\end{equation}
Testing the weak form with $u_n$ and using the coercivity of
$\mathcal A_{\Omega_\delta,w_0}$ from Lemma~\ref{lm:well-possed} gives
$\|u_n\|_{H^1}\le C\bigl(\|u_n\|_{L^2}
+\|(-c^2\widetilde\Delta-\omega^2)u_n\|_{L^2}\bigr)\le C$, a bound which
survives the reflection. Since $\delta_m^{(n)}\to\infty$, both terms
tend to zero, and therefore
$\|(-c^2\widetilde\Delta-\omega^2)w_n\|_{L^2(\R^2)}\to0$ while
$\|w_n\|_{L^2(\R^2)}\ge1$. This contradicts~\eqref{eq:plane_resolvent}
and proves~\eqref{eq:uniform_resolvent}.

Finally, a nonzero $u\in\ker(T_\delta-\omega^2I)$ would
violate~\eqref{eq:uniform_infsup}, so $\omega^2\notin\sigma_p(T_\delta)$,
and the last assertion is the definition of $C_\delta(\omega)$.
\end{proof}

\begin{rmk}
Proposition~\ref{prop:uniform} supersedes
Proposition~\ref{prop:truncated_wellposed}: for layers thicker than
$\delta_0(\omega)$ there is no exceptional set of frequencies at all.
We have kept Proposition~\ref{prop:truncated_wellposed} because it is
proved by elementary means, holds for every $\delta$, and does not
require~\ref{hyp:geom} or~\ref{hyp:saturate}. What is not claimed here
is uniformity in $\omega$: the threshold $\delta_0$ and the constant
$C_\star$ both depend on the frequency, exactly as
in~\cite{KimPasciak2010a,KimPasciak2010b} for a homogeneous background.
\end{rmk}

\section{Exponential convergence of the truncated PML solution}
\label{sec:convergence}

Proposition~\ref{prop:truncated_wellposed} guarantees that the truncated
problem~\eqref{eq:PML_truncated_problem} is solvable for almost every
$\omega>0$, but it says nothing about how well its solution $u_\delta$
approximates the physical solution $u$ of~\eqref{eq:real_helmholtz}
produced by Proposition~\ref{prop:physical}. The purpose of this
section is to close that gap: we prove that the error decays
exponentially in the product $\gamma\delta$ of absorption strength and
layer thickness.

Throughout this section we keep hypotheses
\ref{hyp:support}--\ref{hyp:saturate} and
$0<\gamma_a\leq\gamma_b=\sigma_M<1$.

\begin{rmk}\label{rmk:why_min}
The requirement $R<\min\{a,b\}$, rather than the weaker $R\le a$, is
not cosmetic. The Cartesian stretching acts on $x$ and $y$
\emph{separately}, and the sign of the imaginary part produced by the
stretching is controlled by the sign of $(x-y_1)\tau_1(x)$ and of
$(y-y_2)\tau_2(y)$ for $\y=(y_1,y_2)\in\p B_R$
(see~\eqref{eq:imrho2} below). Both products are non-negative for every
$\y\in\p B_R$ precisely when $|y_1|\le R\le a$ \emph{and}
$|y_2|\le R\le b$. If $R>b$, points $\y$ near the top of $\p B_R$ have
$y_2>b$, and for $\x$ in the top layer the second product becomes
negative; the complex stretching then \emph{amplifies} instead of
damping the corresponding contribution, and the whole argument below
collapses. Strictness is used to guarantee $|\x|>R$ on a neighbourhood
of $\p\Omega$, where the two definitions of the extension~\eqref{eq:def_hatu}
must agree.
\end{rmk}

\subsection{Uniform bounds for Hankel functions in the first quadrant}

The decay of the analytically continued solution is governed entirely by
the following elementary but sharp bounds, which we could not locate in
the form needed and therefore prove.

\begin{lemma}\label{lem:hankel}
Let $\zeta\in\C\setminus\{0\}$ with $0\le\Arg\zeta\le\pi/2$. Then
\begin{equation}\label{eq:hankel0}
  \bigl|H_0^{(1)}(\zeta)\bigr|
  \;\le\; \sqrt{\frac{2}{\pi|\zeta|}}\;e^{-\Im\zeta},
\end{equation}
\begin{equation}\label{eq:hankel1}
  \bigl|H_1^{(1)}(\zeta)\bigr|
  \;\le\; \sqrt{\frac{2}{\pi|\zeta|}}
          \left(1+\frac{2}{\sqrt{2\pi|\zeta|}}\right)e^{-\Im\zeta}.
\end{equation}
\end{lemma}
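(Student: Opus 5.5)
The plan is to use the standard integral representation of the Hankel function valid in the closed upper half plane,
\begin{equation*}
  H_\nu^{(1)}(\zeta)=\sqrt{\frac{2}{\pi\zeta}}\,
  \frac{e^{\ri(\zeta-\nu\pi/2-\pi/4)}}{\Gamma(\nu+\frac12)}
  \int_0^\infty e^{-t}\,t^{\nu-1/2}\Bigl(1+\frac{\ri t}{2\zeta}\Bigr)^{\nu-1/2}dt,
  \qquad \Re\nu>-\tfrac12,
\end{equation*}
which holds for $-\pi/2<\Arg\zeta<3\pi/2$, in particular on the whole closed quadrant $0\le\Arg\zeta\le\pi/2$. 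The prefactor contributes exactly $\sqrt{2/(\pi|\zeta|)}\,e^{-\Im\zeta}$ in modulus, since $|e^{\ri\zeta}|=e^{-\Im\zeta}$ and the constant phase has modulus one. So everything reduces to bounding the integral uniformly for $\Arg\zeta\in[0,\pi/2]$.

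The key geometric observation is that for $t\ge0$ and $0\le\Arg\zeta\le\pi/2$, the number $\ri t/(2\zeta)$ has argument $\pi/2-\Arg\zeta\in[0,\pi/2]$, so $w=1+\ri t/(2\zeta)$ lies in the closed first quadrant with $\Re w\ge1$; hence $|w|\ge1$ and $|\Arg w|\le\pi/2$, so the principal power is unambiguous and continuous up to the boundary rays. For $\nu=0$ this gives $|w^{-1/2}|=|w|^{-1/2}\le1$, and the integral is bounded by $\Gamma(1/2)^{-1}\int_0^\infty e^{-t}t^{-1/2}dt=1$, which yields \eqref{eq:hankel0}. For $\nu=1$ the integrand is $e^{-t}t^{1/2}w^{1/2}$ and I bound $|w|^{1/2}\le 1+|t/(2\zeta)|^{1/2}$ (using $|w|\le1+t/(2|\zeta|)$ and $\sqrt{1+s}\le1+\sqrt s$). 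Then the integral is at most $\Gamma(3/2)^{-1}\bigl[\Gamma(3/2)+(2|\zeta|)^{-1/2}\Gamma(2)\bigr]=1+\frac{2}{\sqrt\pi}(2|\zeta|)^{-1/2}=1+\frac{2}{\sqrt{2\pi|\zeta|}}$, which is exactly the factor in \eqref{eq:hankel1}.

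The main obstacle is justifying the representation uniformly up to the boundary rays, especially $\Arg\zeta=\pi/2$ where the stretching degenerates and $\Arg\zeta=0$. I would quote it from Watson (or DLMF 10.9.E, Poisson-type/Laplace representation) for $-\pi/2<\Arg\zeta<3\pi/2$, which contains the closed quadrant in its interior, so no limiting argument is needed; if one prefers, derive it by rotating the contour in $\int e^{\ri\zeta s}(s^2-1)^{\nu-1/2}ds$ and check absolute convergence, which holds since $\Re w\ge1$ keeps $w^{\nu-1/2}$ of at most polynomial growth. Sharpness follows from the leading asymptotics $H_0^{(1)}(\zeta)\sim\sqrt{2/(\pi\zeta)}e^{\ri(\zeta-\pi/4)}$, but that is only a remark.
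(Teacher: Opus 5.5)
Your proposal is correct and follows essentially the same route as the paper: Watson's integral representation, the observation that $\Re\bigl(1+\ri t/(2\zeta)\bigr)\ge1$ on the closed first quadrant, the trivial bound $|w_t^{-1/2}|\le1$ for $\nu=0$, and $\sqrt{1+\lambda}\le1+\sqrt\lambda$ together with $\Gamma(3/2)=\sqrt\pi/2$ for $\nu=1$. Your remark that the validity range $-\pi/2<\Arg\zeta<3\pi/2$ contains the closed quadrant is also the paper's justification that no limiting argument is needed at the boundary rays.
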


\begin{proof}
We use the classical integral representation
\begin{equation}\label{eq:watson}
  H_\nu^{(1)}(\zeta)=\sqrt{\frac{2}{\pi\zeta}}\,
  \frac{e^{\ri\left(\zeta-\frac{\nu\pi}{2}-\frac{\pi}{4}\right)}}
       {\Gamma\!\left(\nu+\tfrac12\right)}
  \int_0^\infty e^{-t}\,t^{\nu-\frac12}
  \left(1+\frac{\ri t}{2\zeta}\right)^{\nu-\frac12}dt,
\end{equation}
valid for $\Re\nu>-\tfrac12$ and $-\tfrac{\pi}{2}<\Arg\zeta<\tfrac{3\pi}{2}$
\cite[\S6.12]{watson}; in particular it is valid on the closed first
quadrant. Since $|e^{\ri\zeta}|=e^{-\Im\zeta}$ and
$\bigl|\sqrt{2/(\pi\zeta)}\bigr|=\sqrt{2/(\pi|\zeta|)}$, everything
reduces to estimating the integral.

Let $0\le\Arg\zeta\le\pi/2$ and $t>0$. Then
$\Arg\bigl(\ri t/(2\zeta)\bigr)=\tfrac{\pi}{2}-\Arg\zeta\in[0,\tfrac{\pi}{2}]$,
so the complex number $w_t:=1+\ri t/(2\zeta)$ satisfies $\Re w_t\ge1$ and
therefore
\begin{equation}\label{eq:wt}
  1\le|w_t|\le 1+\frac{t}{2|\zeta|},
  \qquad \Arg w_t\in\bigl[0,\tfrac{\pi}{2}\bigr).
\end{equation}
Because the exponent $\nu-\tfrac12$ is real, $|w_t^{\nu-1/2}|=|w_t|^{\nu-1/2}$.

For $\nu=0$ the exponent is $-\tfrac12<0$, so by~\eqref{eq:wt}
$|w_t^{-1/2}|\le1$ and
\begin{equation}
  \left|\int_0^\infty e^{-t}t^{-\frac12}w_t^{-\frac12}dt\right|
  \le\int_0^\infty e^{-t}t^{-\frac12}dt=\Gamma\!\left(\tfrac12\right),
\end{equation}
which cancels the factor $\Gamma(1/2)$ in~\eqref{eq:watson} and
yields~\eqref{eq:hankel0}.

For $\nu=1$ the exponent is $\tfrac12>0$; using
$(1+\lambda)^{1/2}\le1+\lambda^{1/2}$ for $\lambda\ge0$ we get from~\eqref{eq:wt}
\begin{equation}
  \left|\int_0^\infty e^{-t}t^{\frac12}w_t^{\frac12}dt\right|
  \le\int_0^\infty e^{-t}t^{\frac12}
     \left(1+\sqrt{\frac{t}{2|\zeta|}}\right)dt
  =\Gamma\!\left(\tfrac32\right)+\frac{\Gamma(2)}{\sqrt{2|\zeta|}} .
\end{equation}
Dividing by $\Gamma(3/2)=\sqrt\pi/2$ and using $\Gamma(2)=1$ gives the
factor $1+2/\sqrt{2\pi|\zeta|}$, which is~\eqref{eq:hankel1}.
\end{proof}

\begin{rmk}
Estimate~\eqref{eq:hankel0} is sharp: the ratio between its two sides
tends to $1$ as $|\zeta|\to\infty$ along the positive real axis. Note
also that both bounds are uniform up to the boundary rays
$\Arg\zeta=0$ and $\Arg\zeta=\pi/2$, which is what makes them usable at
the interface $\p\Omega$, where the stretching degenerates.
\end{rmk}

\subsection{Geometry of the Cartesian stretching}

The following lemma is the heart of the matter, and the place where the
Cartesian (as opposed to radial) nature of the layer must be confronted
directly.

\begin{lemma}\label{lem:geometry}
Assume~\ref{hyp:geom}. Let $\x=(x,y)\in\overline{\Omega^{(2)}}$ with
$|\x|>R$, let $\y=(y_1,y_2)\in\p B_R$, and set
\begin{equation}\label{eq:def_z}
  \mathbf z=T(\x)-\y=(\x-\y)+\ri\,\boldsymbol\tau(\x)\in\C^2 .
\end{equation}
Then:
\begin{enumerate}
  \item\label{it:geo1} $\ds\Im\rho(\mathbf z)^2
        =2\bigl[(x-y_1)\tau_1(x)+(y-y_2)\tau_2(y)\bigr]\ge0$,
        with equality if and only if $\x\in\overline\Omega$;
  \item\label{it:geo2} $\rho(\mathbf z)^2\notin(-\infty,0]$; hence
        $\rho(\mathbf z)$ is well defined and
        $0\le\Arg\rho(\mathbf z)\le\pi/2$;
  \item\label{it:geo3} $|\rho(\mathbf z)|\le\Lambda_\delta$;
  \item\label{it:geo4} for every $s\in(0,1)$ and every
        $\x\in\overline{\Omega^{(2)}}\setminus\Omega^{(s)}$,
        \begin{equation}\label{eq:Theta_bound}
          \Im\rho(\mathbf z)\;\ge\;\Theta(s)\;>\;0 .
        \end{equation}
\end{enumerate}
\end{lemma}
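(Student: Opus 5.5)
The plan is to write out $\rho(\mathbf z)^2$ explicitly and read off all four items from its real and imaginary parts. With $\mathbf z=(\x-\y)+\ri\boldsymbol\tau(\x)$,
\begin{equation*}
  \rho(\mathbf z)^2=|\x-\y|^2-|\boldsymbol\tau(\x)|^2
  +2\ri\bigl[(x-y_1)\tau_1(x)+(y-y_2)\tau_2(y)\bigr].
\end{equation*}
This gives the formula in (\ref{it:geo1}) at once.

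\emph{Sign in (\ref{it:geo1}).} Since $\sigma_1\ge0$ is even and vanishes on $[-a,a]$, the function $\tau_1(x)=\int_0^x\sigma_1$ vanishes for $|x|\le a$ and has the sign of $x$ for $|x|>a$. In the latter case, because $|y_1|\le R<a<|x|$ by~\ref{hyp:geom}, the number $x-y_1$ also has the sign of $x$. Hence $(x-y_1)\tau_1(x)\ge0$, with strict inequality iff $|x|>a$; here I use that $\sigma_1>0$ on $(a,a+\delta^{\mathrm t}_a)$, since $p>0$ there. The same holds in $y$. The sum therefore vanishes iff $|x|\le a$ and $|y|\le b$, i.e.\ iff $\x\in\overline\Omega$. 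This sign bookkeeping is exactly where $R<\min\{a,b\}$ enters, as Remark~\ref{rmk:why_min} anticipates.

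\emph{Branch, item (\ref{it:geo2}).} If $\Im\rho(\mathbf z)^2>0$ there is nothing to show. If it vanishes, then $\x\in\overline\Omega$, so $\boldsymbol\tau(\x)=0$ and $\rho(\mathbf z)^2=|\x-\y|^2\ge(|\x|-R)^2>0$, because $|\x|>R=|\y|$. Thus $\rho(\mathbf z)^2$ lies in the closed upper half plane minus $(-\infty,0]$, and its principal square root has argument in $[0,\pi/2]$. Note that $\Arg\rho=\pi/2$ is in fact excluded, since it would force $\rho^2<0$.

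\emph{Bound, item (\ref{it:geo3}).} I estimate
\begin{equation*}
  |\rho(\mathbf z)|^2=|z_1^2+z_2^2|\le|z_1|^2+|z_2|^2\le\bigl(|\x-\y|+|\boldsymbol\tau(\x)|\bigr)^2 .
\end{equation*}
For the first term, $|\x-\y|\le|\x|+R\le\sqrt{(a+2\delta_a)^2+(b+2\delta_b)^2}+R$ on $\overline{\Omega^{(2)}}$. For the second, $|\tau_1(x)|\le\sigma_M(|x|-a)_+\le2\sigma_M\delta_a$, and similarly in $y$, so $|\boldsymbol\tau|\le2\sigma_M\sqrt{\delta_a^2+\delta_b^2}$. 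Together these give $|\rho(\mathbf z)|\le\Lambda_\delta$.

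\emph{Lower bound, item (\ref{it:geo4}).} I would use the identity $\Im\rho^2=2\Re\rho\,\Im\rho$. Since $0\le\Re\rho\le|\rho|\le\Lambda_\delta$, and $\Re\rho>0$ by (\ref{it:geo2}), this gives
\begin{equation*}
  \Im\rho\ge\frac{\Im\rho^2}{2\Lambda_\delta}.
\end{equation*}
Now suppose $\x\notin\Omega^{(s)}$; say $|x|\ge a+s\delta_a$, the case $|y|\ge b+s\delta_b$ being identical. Both products in (\ref{it:geo1}) are nonnegative, so $\Im\rho^2\ge2(x-y_1)\tau_1(x)$. Here $|\tau_1(x)|\ge\int_a^{a+s\delta_a}\sigma_1=\Sigma_1(s)$ by monotonicity of $|\tau_1|$, and $|x-y_1|\ge a+s\delta_a-R$. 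Hence $\Im\rho\ge\Sigma_1(s)(a+s\delta_a-R)/\Lambda_\delta\ge\Theta(s)$.

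The only delicate points are the sign argument in (\ref{it:geo1}) and making sure the branch of the square root never meets the cut. Both are settled by the case analysis above, so I expect no genuine obstacle.
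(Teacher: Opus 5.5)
Your proposal is correct and follows the paper's proof essentially line by line: the same expansion of $\rho(\mathbf z)^2$, the same termwise sign check using $|y_1|,|y_2|\le R<\min\{a,b\}$, the bound $|\rho(\mathbf z)|\le|\mathbf z|\le|\x-\y|+|\boldsymbol\tau(\x)|$, and the identity $\Im\rho^2=2\Re\rho\,\Im\rho$ combined with item (3). You also make explicit two points the paper leaves implicit: that $\sigma_1>0$ on $(a,a+\delta^{\mathrm t}_a)$ gives the strict equality case, and that $\Arg\rho<\pi/2$, so $\Re\rho>0$ before dividing (the paper states only $\Re\rho\ge0$).
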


\begin{proof}
\emph{\ref{it:geo1}.} By~\eqref{eq:def_z},
\begin{equation}\label{eq:imrho2}
  \rho(\mathbf z)^2=\bigl|\x-\y\bigr|^2-\bigl|\boldsymbol\tau(\x)\bigr|^2
  +2\ri\,(\x-\y)\cdot\boldsymbol\tau(\x),
\end{equation}
which gives the stated formula for the imaginary part. We check its
sign termwise. By construction $\sigma_1\ge0$ and $\sigma_1\equiv0$ on
$[-a,a]$, so $\tau_1(x)=0$ for $|x|\le a$, while for $|x|>a$ we have
$\tau_1(x)=\operatorname{sgn}(x)\int_a^{|x|}\sigma_1>0$ in absolute
value and $\operatorname{sgn}\tau_1(x)=\operatorname{sgn}(x)$. By
\ref{hyp:geom}, $|y_1|\le R<a$, hence for $|x|>a$ the number $x-y_1$ has
the sign of $x$ as well and
\begin{equation}\label{eq:term1}
  (x-y_1)\tau_1(x)=|x-y_1|\,|\tau_1(x)|\ \ge\ 0 ,
\end{equation}
the same identity holding trivially (both sides zero) when $|x|\le a$.
The analogous statement for the second term uses $|y_2|\le R<b$. Adding,
$\Im\rho(\mathbf z)^2\ge0$, with equality if and only if
$\tau_1(x)=\tau_2(y)=0$, i.e. $|x|\le a$ and $|y|\le b$.

\emph{\ref{it:geo2}.} If $\x\notin\overline\Omega$ then
$\Im\rho(\mathbf z)^2>0$ by~\ref{it:geo1}, so $\rho(\mathbf z)^2$ is not
a non-positive real. If $\x\in\overline\Omega$ then
$\boldsymbol\tau(\x)=0$ and $\rho(\mathbf z)^2=|\x-\y|^2>0$, because
$|\x|>R=|\y|$. In both cases $\Arg\rho(\mathbf z)^2\in[0,\pi]$, and the
principal square root gives $\Arg\rho(\mathbf z)\in[0,\pi/2]$.

\emph{\ref{it:geo3}.} Writing $|\mathbf z|^2=|z_1|^2+|z_2|^2$ we have
$|\rho(\mathbf z)|^2=|z_1^2+z_2^2|\le|\mathbf z|^2$, and since the real
and imaginary parts of $\mathbf z$ are orthogonal components of a vector
of $\R^4$,
\begin{equation}
  |\mathbf z|=\sqrt{|\x-\y|^2+|\boldsymbol\tau(\x)|^2}
  \le|\x-\y|+|\boldsymbol\tau(\x)| .
\end{equation}
On $\overline{\Omega^{(2)}}$ one has
$|\x-\y|\le|\x|+R\le\sqrt{(a+2\delta_a)^2+(b+2\delta_b)^2}+R$ and, since
$0\le\sigma_1\le\gamma_a$ vanishes on $[-a,a]$,
$|\tau_1(x)|\le\gamma_a\cdot 2\delta_a$ for $|x|\le a+2\delta_a$, and
similarly $|\tau_2(y)|\le2\gamma_b\delta_b$, whence
$|\boldsymbol\tau(\x)|\le2\sigma_M\sqrt{\delta_a^2+\delta_b^2}$.
Adding gives $|\rho(\mathbf z)|\le|\mathbf z|\le\Lambda_\delta$.

\emph{\ref{it:geo4}.} Let $\x\in\overline{\Omega^{(2)}}\setminus\Omega^{(s)}$;
then $|x|\ge a+s\delta_a$ or $|y|\ge b+s\delta_b$. Suppose the former
(the other case is symmetric). Then $|x-y_1|\ge a+s\delta_a-R>0$ and, since $\sigma_1\ge0$,
\begin{equation}
  |\tau_1(x)|\ =\ \int_a^{|x|}\sigma_1(t)\,dt
  \ \ge\ \int_a^{a+s\delta_a}\sigma_1(t)\,dt\ =\ \Sigma_1(s).
\end{equation}
Since the second term in~\ref{it:geo1} is non-negative,
\begin{equation}\label{eq:B_lower}
  \Im\rho(\mathbf z)^2\ \ge\ 2\,\Sigma_1(s)\bigl(a+s\delta_a-R\bigr).
\end{equation}
On the other hand, writing $\rho=\rho(\mathbf z)$ we have
$\Im(\rho^2)=2\,\Re\rho\,\Im\rho$ and, by~\ref{it:geo2},
$\Re\rho\ge0$, so
\begin{equation}
  \Im\rho=\frac{\Im(\rho^2)}{2\,\Re\rho}\ \ge\ \frac{\Im(\rho^2)}{2|\rho|}
  \ \ge\ \frac{\Im(\rho^2)}{2\Lambda_\delta},
\end{equation}
using~\ref{it:geo3}. Combining with~\eqref{eq:B_lower} and taking the
minimum over the two cases gives exactly~\eqref{eq:Theta_bound}.
\end{proof}

\begin{rmk}\label{rmk:no_fourier_bessel}
It is tempting to obtain the analytic continuation from the
Fourier--Bessel expansion
\begin{equation}\label{eq:fourier_bessel}
  u=\sum_{n=-\infty}^{\infty}a_nH^{(1)}_n(kr)e^{\ri n\vartheta},
  \qquad r>R,
\end{equation}
by continuing each mode through $r\mapsto\tilde r$. That route is natural for a
\emph{radial} PML, where the stretching acts on $r$ alone, but it is not
available here: the Cartesian map $T$ does not preserve the polar
variables, so $T(\x)$ does not correspond to any single complex value of
$r$, and the modewise continuation carries no information about
$\rho(T(\x)-\y)$. Lemma~\ref{lem:geometry} replaces that argument by a
direct estimate on the complexified distance, which is intrinsically
adapted to the separable Cartesian stretching.
\end{rmk}

\subsection{The exact PML extension of the physical solution}

\begin{lemma}\label{lem:continuation}
Assume~\ref{hyp:support}--\ref{hyp:geom} and let $u$ be the solution
of~\eqref{eq:real_helmholtz} given by
Proposition~\ref{prop:physical}. Define, for
$\x\in\overline{\Omega^{(2)}}$ with $|\x|>R$,
\begin{equation}\label{eq:def_U}
  \mathcal U(\x)=\int_{\p B_R}
  \left[u(\y)\,\frac{\p\Phi}{\p\n(\y)}\bigl(T(\x)-\y\bigr)
        -\frac{\p u}{\p \n}(\y)\,\Phi\bigl(T(\x)-\y\bigr)\right]ds(\y),
\end{equation}
where $\n$ is the unit normal on $\p B_R$ pointing away from the origin.
Then:
\begin{enumerate}
  \item\label{it:cont1} $\mathcal U$ is well defined and of class
        $C^\infty$ on $\{\x\in\overline{\Omega^{(2)}}:|\x|>R\}$;
  \item\label{it:cont2} $\mathcal U(\x)=u(\x)$ whenever
        $\x\in\overline\Omega$ and $|\x|>R$;
  \item\label{it:cont3} $\widetilde\Delta\,\mathcal U+k^2\mathcal U=0$ on
        $\Omega^{(2)}\setminus\overline{B_R}$;
  \item\label{it:cont4} for every $s\in(0,1)$ and every
        $\x\in\overline{\Omega^{(2)}}\setminus\Omega^{(s)}$,
        \begin{equation}\label{eq:U_pointwise}
        \begin{split}
          |\mathcal U(\x)|\;\le\;
          \frac{\sqrt{2\pi R}}{4}&\sqrt{\frac{2}{\pi k\,\Theta(s)}}\,e^{-k\,\Theta(s)}
          \Bigl[\Bigl\|\frac{\p u}{\p\n}\Bigr\|_{L^2(\p B_R)}\\
          &+\frac{k\Lambda_\delta}{\Theta(s)}
           \Bigl(1+\frac{2}{\sqrt{2\pi k\,\Theta(s)}}\Bigr)
           \|u\|_{L^2(\p B_R)}\Bigr] .
        \end{split}
        \end{equation}
\end{enumerate}
\end{lemma}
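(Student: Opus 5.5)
Item~\ref{it:cont1} comes first. For $\x\in\overline{\Omega^{(2)}}$ with $|\x|>R$ and $\y\in\p B_R$, Lemma~\ref{lem:geometry}\ref{it:geo2} shows that $\rho(T(\x)-\y)^2$ stays in the closed upper half plane and avoids $(-\infty,0]$. It is also nonzero: when $\x\in\overline\Omega$ it equals $|\x-\y|^2>0$, and otherwise its imaginary part is positive. Hence $\mathbf z\mapsto k\rho(\mathbf z)$ is holomorphic near every such $\mathbf z$. Since $H^{(1)}_0$ is holomorphic off the branch cut, $\Phi$ is holomorphic in a neighbourhood of the compact set of arguments arising from a compact subset of the domain. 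Composing with the smooth map $\x\mapsto T(\x)$ (the profiles $\sigma_j$ are $C^\infty$) and integrating against the traces $u|_{\p B_R}\in H^{3/2}$ and $\p_\n u\in H^{1/2}$ yields a $C^\infty$ function. We may differentiate under the integral because the integrand and all its $\x$-derivatives are continuous and bounded uniformly in $\y$ on compacts. The trace regularity comes from $u\in H^2_{\mathrm{loc}}$ (Proposition~\ref{prop:physical}).

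For \ref{it:cont2} we use that $T=\mathrm{id}$ on $\overline\Omega$, so $\mathcal U(\x)$ coincides there with the classical Green representation of the radiating solution $u$ of $\Delta u+k^2u=0$ in $|\x|>R$. This representation is valid because $c\equiv c_\infty$ and $f\equiv0$ outside $B_R$ by (H1) and (H3), and $u$ is radiating (cf.~\cite{coltonkress}). For \ref{it:cont3} we observe that $\mathbf z\mapsto\Phi(\mathbf z-\y)$ is holomorphic in $\mathbf z\in\C^2$ and satisfies $(\p_{z_1}^2+\p_{z_2}^2+k^2)\Phi=0$, since this identity holds for real arguments and persists by analytic continuation. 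The map $T$ is separable with $\p_x\tilde x=d_1$, so the chain rule gives $d_1^{-1}\p_x(d_1^{-1}\p_x F(T(\x)))=(\p_{z_1}^2F)(T(\x))$, and likewise in $y$. Therefore $\widetilde\Delta[\Phi(T(\x)-\y)]=(\Delta_{\mathbf z}\Phi)(T(\x)-\y)$, and the same holds for the normal derivative kernel in $\y$. Differentiating under the integral then gives \ref{it:cont3}.

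Item \ref{it:cont4} is the quantitative part. We write $\zeta=k\rho(\mathbf z)$, which lies in the closed first quadrant by Lemma~\ref{lem:geometry}\ref{it:geo2}. Lemma~\ref{lem:geometry}\ref{it:geo4} gives $\Im\zeta\ge k\Theta(s)$ and, since $|\zeta|\ge\Im\zeta$, also $|\zeta|\ge k\Theta(s)$. Lemma~\ref{lem:hankel} then bounds $|\Phi|\le\frac14\sqrt{2/(\pi k\Theta(s))}\,e^{-k\Theta(s)}$. For the double-layer kernel we compute
\[
\p_{\n(\y)}\Phi(T(\x)-\y)=\tfrac{\ri}{4}k\,H_1^{(1)}(\zeta)\,\frac{(T(\x)-\y)\cdot\n}{\rho(\mathbf z)},
\]
using $H_0'=-H_1$ and $\p_{\y}\rho=-(\mathbf z)/\rho$. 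We bound $|(T(\x)-\y)\cdot\n|\le|\mathbf z|\le\Lambda_\delta$ (as in the proof of \ref{it:geo3}) and $|\rho|\ge\Im\rho\ge\Theta(s)$, which produces the factor $k\Lambda_\delta/\Theta(s)$ times the $H_1^{(1)}$ bound of Lemma~\ref{lem:hankel} evaluated with $|\zeta|\ge k\Theta(s)$. Finally, Cauchy--Schwarz on $\p B_R$ contributes $|\p B_R|^{1/2}=\sqrt{2\pi R}$ multiplied by the $L^2$ norms.

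The main obstacle is the smoothness claim in \ref{it:cont1} at points of $\overline\Omega$ near $\p B_R$, and more generally checking that $\rho$ never reaches the branch cut or zero, so that composing with the principal square root is legitimate. Lemma~\ref{lem:geometry}\ref{it:geo2} settles this. The other routine point is the justification of the Green representation in \ref{it:cont2}, which I would cite rather than reprove.
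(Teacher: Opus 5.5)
Your proposal is correct and follows the paper's proof essentially step for step. You use Lemma~\ref{lem:geometry}(\ref{it:geo2}) to keep $k\rho$ off the branch cut and away from zero (giving smoothness), $T=\mathrm{id}$ on $\overline\Omega$ to reduce to Green's representation formula, the chain rule $\p_x=d_1\p_{z_1}$, $\p_y=d_2\p_{z_2}$ for the stretched equation, and Lemma~\ref{lem:hankel} with $\Im(k\rho)\ge k\Theta(s)$, $|\rho|\ge\Theta(s)$, $|\mathbf z|\le\Lambda_\delta$ and Cauchy--Schwarz on $\p B_R$ for the pointwise bound. Two details are, if anything, slightly tidier than in the paper: you bound the double-layer kernel through $|\mathbf z\cdot\n|\le|\mathbf z|$ rather than the paper's componentwise $|z_j|\le|\mathbf z|$, and you invoke holomorphy only on compact subsets of $\{|\x|>R\}$ rather than on the ``compact set'' $\{T(\x)-\y\}$.
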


\begin{proof}
By~\ref{hyp:geom} we have $f=0$ and $c\equiv c_\infty$ on
$\R^2\setminus\overline{B_R}$, so $u$ is a radiating solution of the
constant-coefficient Helmholtz equation $\Delta u+k^2u=0$ there, and by
elliptic regularity $u$ is real analytic on the open set
$\R^2\setminus\overline{B_R}$. Regularity \emph{up to} $\p B_R$ is not
claimed: for a merely bounded $c$ that oscillates against the interface,
$u$ need be no better than $H^2$ across it. What is available, and all
that is used, is
Proposition~\ref{prop:physical}(\ref{it:phys_exist}) together with the
trace theorem: $u\in H^2_{\mathrm{loc}}(\R^2)$ gives
$u|_{\p B_R}\in H^{3/2}(\p B_R)$ and
$\p u/\p\n|_{\p B_R}\in H^{1/2}(\p B_R)$, so that the Cauchy data lie in
$L^2(\p B_R)$ and the Green representation of a radiating solution holds
in the $H^{1/2}\times H^{-1/2}$ duality~\cite[Thm.~2.5]{coltonkress}.

\emph{\ref{it:cont1}.} By Lemma~\ref{lem:geometry}(\ref{it:geo2}), for
every $\x$ in the indicated set and every $\y\in\p B_R$ the quantity
$\rho(T(\x)-\y)$ lies in the closed first quadrant and is nonzero, hence
away from the branch cut and from the logarithmic singularity of
$H_0^{(1)}$. Therefore $\mathbf z\mapsto\Phi(\mathbf z)$ is holomorphic
on a neighbourhood of the compact set
$\{T(\x)-\y\}$, the integrand in~\eqref{eq:def_U} is smooth in $\x$ and
continuous in $\y$, and differentiation under the integral sign is
legitimate.

\emph{\ref{it:cont2}.} If $\x\in\overline\Omega$ then
$\boldsymbol\tau(\x)=0$ and $T(\x)=\x$, so $\Phi(T(\x)-\y)$ reduces to
the classical outgoing fundamental solution
$\tfrac{\ri}{4}H_0^{(1)}(k|\x-\y|)$ and~\eqref{eq:def_U} is exactly
Green's representation formula for a radiating solution in the exterior
of $B_R$ \cite[Thm.~2.5]{coltonkress}, evaluated at
$\x\in\R^2\setminus\overline{B_R}$.

\emph{\ref{it:cont3}.} Fix $\y\in\p B_R$. As a function of
$\mathbf z\in\C^2$, $\Phi$ is holomorphic near $T(\x)-\y$ and satisfies
$\p_{z_1}^2\Phi+\p_{z_2}^2\Phi+k^2\Phi=0$, this being the holomorphic
continuation of the identity valid for real arguments. By the chain rule
for the holomorphic map $T$, $\p_x=d_1(x)\,\p_{z_1}$ and
$\p_y=d_2(y)\,\p_{z_2}$ acting on $\mathbf z\mapsto\Phi(T(\x)-\y)$, and
hence
\begin{equation}
  \frac{1}{d_1}\p_x \left(\frac{1}{d_1}\p_x \right)
  +\frac{1}{d_2}\p_y \left(\frac{1}{d_2}\p_y \right)
  \;=\;\widetilde\Delta
\end{equation}
applied to $\Phi(T(\cdot)-\y)$ reproduces
$(\p_{z_1}^2+\p_{z_2}^2)\Phi$ evaluated at $T(\x)-\y$. Thus
$\widetilde\Delta\,\Phi(T(\cdot)-\y)+k^2\Phi(T(\cdot)-\y)=0$, and the
same holds for $\p\Phi/\p\n(\y)$ and, after integration in $\y$, for
$\mathcal U$.

\emph{\ref{it:cont4}.} Write $\rho=\rho(T(\x)-\y)$ and
$\Theta=\Theta(s)$. By Lemma~\ref{lem:geometry},
$\Arg(k\rho)\in[0,\pi/2]$, $\Im(k\rho)=k\Im\rho\ge k\Theta$ and
$k\Theta\le k|\rho|\le k\Lambda_\delta$. Lemma~\ref{lem:hankel} then gives
\begin{equation}\label{eq:Phi_bound}
  |\Phi(T(\x)-\y)|
  =\frac14\bigl|H^{(1)}_0(k\rho)\bigr|
  \le\frac14\sqrt{\frac{2}{\pi k\Theta}}\,e^{-k\Theta}.
\end{equation}
For the double-layer kernel, $\p_{z_j}\Phi(\mathbf z)
=-\tfrac{\ri k}{4}H_1^{(1)}(k\rho)\,z_j/\rho$, because
$\left(H_0^{(1)}\right)'=-H_1^{(1)}$ and $\p_{z_j}\rho=z_j/\rho$. Using
$|z_j|\le|\mathbf z|\le\Lambda_\delta$ and
$|\rho|\ge\Im\rho\ge\Theta$ we obtain
\begin{equation}\label{eq:dPhi_bound}
  \left|\frac{\p\Phi}{\p\n(\y)}\bigl(T(\x)-\y\bigr)\right|
  \le\frac{k}{4}\,\frac{\Lambda_\delta}{\Theta}\,
     \bigl|H_1^{(1)}(k\rho)\bigr|
  \le\frac{k\Lambda_\delta}{4\,\Theta}
     \sqrt{\frac{2}{\pi k\Theta}}
     \left(1+\frac{2}{\sqrt{2\pi k\Theta}}\right)e^{-k\Theta}.
\end{equation}
Inserting~\eqref{eq:Phi_bound} and~\eqref{eq:dPhi_bound}
into~\eqref{eq:def_U} and applying the Cauchy--Schwarz inequality on
$\p B_R$, whose length is $2\pi R$, yields~\eqref{eq:U_pointwise}. We
use $L^2$ rather than $L^\infty$ norms of the Cauchy data precisely
because the latter need not be finite when $c$ is merely bounded, as
noted at the beginning of this proof; the two bounds differ only by the
factor $\sqrt{2\pi R}$.
\end{proof}

We can now glue $u$ and its stretched continuation into a single
function on $\Omega_\delta$ which solves the PML equation exactly.

\begin{pro}\label{prop:hat_u}
Assume~\ref{hyp:support}--\ref{hyp:geom} and define
\begin{equation}\label{eq:def_hatu}
  \hat u(\x)=
  \begin{cases}
    u(\x), & \x\in\overline\Omega,\\[2pt]
    \mathcal U(\x), & \x\in\overline{\Omega^{(2)}}\setminus\Omega .
  \end{cases}
\end{equation}
Then $\hat u\in H^1(\Omega^{(2)})$, $\hat u$ is smooth on
$\overline{\Omega^{(2)}}\setminus\overline{B_R}$, and
\begin{equation}\label{eq:hatu_weak}
  \mathcal A_{\Omega_\delta,\omega^2}(\hat u,v)=(Jf,v)_{\Omega_\delta}
  \qquad\text{for every }v\in H^1_0(\Omega_\delta).
\end{equation}
\end{pro}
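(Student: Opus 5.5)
The plan is to check three things in turn: the matching of $u$ and $\mathcal U$ across $\p\Omega$, membership in $H^1(\Omega^{(2)})$, and the weak identity~\eqref{eq:hatu_weak}. The last is obtained by splitting $\Omega_\delta$ into $\Omega$ and the layer $\Omega_\delta\setminus\overline\Omega$ and integrating by parts on each piece.

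\emph{Step 1 (matching and regularity).} By~\ref{hyp:geom}, $R<\min\{a,b\}$, so there is an open neighbourhood $N$ of $\p\Omega$ on which $|\x|>R$. On $N\cap\overline\Omega$, Lemma~\ref{lem:continuation}(\ref{it:cont2}) gives $\mathcal U=u$. I will use the fact that $\mathcal U$ is defined and $C^\infty$ on all of $\{\x\in\overline{\Omega^{(2)}}:|\x|>R\}$ by Lemma~\ref{lem:continuation}(\ref{it:cont1}). Then $\hat u$ coincides with $\mathcal U$ on $N$ (both in $\overline\Omega\cap N$ and outside $\Omega$), so $\hat u$ is $C^\infty$ across $\p\Omega$. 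Hence $\hat u$ is smooth on $\overline{\Omega^{(2)}}\setminus\overline{B_R}$: away from $\Omega$ it is $\mathcal U$, and on $\Omega\setminus\overline{B_R}$ it is $u$, which is real analytic there by Proposition~\ref{prop:physical}. Near $\overline{B_R}$ we have $\hat u=u\in H^2_{\mathrm{loc}}$. These two overlapping descriptions give $\hat u\in H^1(\Omega^{(2)})$, since $\mathcal U$ and its derivatives are bounded on the compact set $\overline{\Omega^{(2)}}\setminus\Omega$.

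\emph{Step 2 (weak equation).} Fix $v\in H^1_0(\Omega_\delta)$. Take a smooth partition of unity $\chi_1+\chi_2=1$ on $\Omega_\delta$, where $\chi_1$ is supported in $\Omega$ and equals $1$ near $\overline{B_R}$, and $\chi_2$ is supported in $\Omega_\delta\setminus\overline{B_R}$. Then split $v=\chi_1v+\chi_2v$.

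For $\chi_1v$: on $\Omega$ we have $\sigma_1=\sigma_2=0$, so $d_j=1$, $J=c^{-2}$, and $\mathcal A_{\Omega_\delta,\omega^2}(\hat u,\chi_1v)$ reduces exactly to the left side of the physical weak form~\eqref{eq:weak_physical}, with test function $\chi_1v\in H^1_0(\Omega)$ (by density of $C_c^\infty$). That form holds, and its right-hand side is $(c^{-2}f,\chi_1v)=(Jf,\chi_1v)$.

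For $\chi_2v$: on $\Omega_\delta\setminus\overline{B_R}$ the function $\hat u$ is smooth, and it satisfies $\widetilde\Delta\hat u+k^2\hat u=0$. In $\Omega^{(2)}\setminus\overline\Omega$ this is Lemma~\ref{lem:continuation}(\ref{it:cont3}); in $\Omega\setminus\overline{B_R}$ it is the Helmholtz equation for $u$, since $\widetilde\Delta=\Delta$ there. Since $c=c_\infty$ there, $J\omega^2=d_1d_2k^2$. The sesquilinear form is the divergence form of $d_1d_2\widetilde\Delta$:
\[
\p_x\bigl(\tfrac{d_2}{d_1}\p_x\hat u\bigr)+\p_y\bigl(\tfrac{d_1}{d_2}\p_y\hat u\bigr)=d_1d_2\widetilde\Delta\hat u,
\]
using that $d_2$ is independent of $x$ and $d_1$ of $y$. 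Integrating by parts against $\chi_2v$, which vanishes on $\p\Omega_\delta$ and near $\overline{B_R}$, therefore gives $\mathcal A_{\Omega_\delta,\omega^2}(\hat u,\chi_2v)=-\int d_1d_2(\widetilde\Delta\hat u+k^2\hat u)\overline{\chi_2v}=0$. This equals $(Jf,\chi_2v)$, because $f=0$ off $\overline{B_R}$.

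Adding the two pieces gives~\eqref{eq:hatu_weak}. The step I expect to need the most care is Step 1, namely the claim that $\mathcal U$ agrees with $u$ on a full neighbourhood of $\p\Omega$ inside $\Omega$. The rest is routine, and the two integrations by parts need no interface terms because the partition of unity avoids both $\p B_R$ and $\p\Omega$ as places where smoothness fails.
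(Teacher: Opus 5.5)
Your proof is correct. Step~1, including the $H^1(\Omega^{(2)})$ gluing, is the paper's argument; Step~2 uses a different decomposition.

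The paper splits $\Omega_\delta$ along $\p\Omega$ and integrates by parts on $\Omega$ and on the layer separately. It then cancels the two interface integrals over $\p\Omega$, using that $\hat u$ is $C^1$ across $\p\Omega$ and that $d_2/d_1$, $d_1/d_2$ are continuous there. This requires an integration by parts against $u$ on $\Omega$ itself, which relies on the $H^2_{\mathrm{loc}}$ regularity of Proposition~\ref{prop:physical} and on the smoothness of $u$ near $\p\Omega$.

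Your partition of unity subordinate to the cover $\{\Omega,\ \Omega_\delta\setminus\overline{B_R}\}$ uses the overlap $\Omega\setminus\overline{B_R}$, where both descriptions of $\hat u$ are valid, so no interface term ever arises. The inner piece needs only the weak formulation~\eqref{eq:weak_physical}, extended by density to $\chi_1v\in H^1_0(\Omega)$; no $H^2$ regularity or normal trace of $u$ is involved. The outer piece needs only the classical equation where $\hat u$ is smooth.

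The price is that you need the PML equation to hold classically across $\p\Omega$, and that costs nothing here. Lemma~\ref{lem:continuation}(\ref{it:cont2}) gives $u=\mathcal U$ on all of $\overline\Omega\setminus\overline{B_R}$, not merely near $\p\Omega$. Hence $\hat u=\mathcal U$ on the whole of $\overline{\Omega^{(2)}}\setminus\overline{B_R}$, and both the smoothness claim and $\widetilde\Delta\hat u+k^2\hat u=0$ there follow at once from parts~(\ref{it:cont1}) and~(\ref{it:cont3}) of that lemma. In particular, the step you flagged as needing the most care is simply the Green representation formula already recorded in Lemma~\ref{lem:continuation}(\ref{it:cont2}).
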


\begin{proof}
\emph{Consistency and smoothness across $\p\Omega$.} By~\ref{hyp:geom},
$|\x|\ge\min\{a,b\}>R$ for every $\x\in\p\Omega$, so there is an open
neighbourhood $N$ of $\p\Omega$ contained in
$\{|\x|>R\}\cap\Omega^{(2)}$. On $N\cap\overline\Omega$ we have
$\mathcal U=u$ by Lemma~\ref{lem:continuation}(\ref{it:cont2}), so the two
branches of~\eqref{eq:def_hatu} agree there and $\hat u=\mathcal U$ on
all of $N$; by Lemma~\ref{lem:continuation}(\ref{it:cont1}), $\hat u$ is
$C^\infty$ on $N$. Since $u\in H^1(\Omega)$ and $\mathcal U$ is smooth
on the compact set $\overline{\Omega^{(2)}}\setminus\Omega$, we get
$\hat u\in H^1(\Omega^{(2)})$.

\emph{The equation.} Let $v\in C_c^\infty(\Omega_\delta)$. On $\Omega$
we have $\sigma_1=\sigma_2=0$, hence $d_1=d_2=1$, $J=c^{-2}$ and
$\widetilde\Delta=\Delta$, so the integrand of
$\mathcal A_{\Omega,\omega^2}(\hat u,v)-(Jf,v)_\Omega$ is the weak form of
$-\Delta u-\omega^2c^{-2}u=c^{-2}f$, i.e. of $-c^2\Delta u-\omega^2u=f$,
which holds by Proposition~\ref{prop:physical}. On
$\Omega_\delta\setminus\overline\Omega$ we have $f=0$, $c\equiv c_\infty$,
and $\omega^2c_\infty^{-2}=k^2$, so the corresponding integrand is the
weak form of $-\widetilde\Delta\mathcal U-k^2\mathcal U=0$, which holds by
Lemma~\ref{lem:continuation}(\ref{it:cont3}). Integrating by parts on each
of the two subdomains, the interface contributions along $\p\Omega$
cancel because $\hat u$ is $C^1$ across $\p\Omega$, as shown above, and
the coefficients $d_2/d_1$, $d_1/d_2$ are continuous there. No
contribution arises on $\p\Omega_\delta$ since $v$ has compact support.
Density of $C_c^\infty(\Omega_\delta)$ in $H^1_0(\Omega_\delta)$
gives~\eqref{eq:hatu_weak}.
\end{proof}

\begin{rmk}
The function $\hat u$ does \emph{not} belong to $H^1_0(\Omega_\delta)$:
it fails the Dirichlet condition on $\p\Omega_\delta$ by precisely the
exponentially small amount estimated
in~\eqref{eq:U_pointwise}. The entire PML error is this boundary
mismatch, and the argument below is a quantitative perturbation
statement to that effect.
\end{rmk}

\subsection{Stability of the truncated problem and a Caccioppoli estimate}

\begin{lemma}\label{lem:infsup}
Let $\omega>0$ with $\omega^2\notin\sigma_p(T_\delta)$. Then the
operator
$\mathcal L_{\omega^2}:H^1_0(\Omega_\delta)\to H^{-1}(\Omega_\delta)$
defined by
$\langle\mathcal L_{\omega^2}u,v\rangle=\mathcal A_{\Omega_\delta,\omega^2}(u,v)$
is an isomorphism. Writing
$C_\delta(\omega)=\|\mathcal L_{\omega^2}^{-1}\|_{H^{-1}\to H^1}$, one has
the inf--sup condition
\begin{equation}\label{eq:infsup}
  \inf_{0\ne u\in H^1_0(\Omega_\delta)}\;
  \sup_{0\ne v\in H^1_0(\Omega_\delta)}\;
  \frac{\bigl|\mathcal A_{\Omega_\delta,\omega^2}(u,v)\bigr|}
       {\|u\|_{H^1}\|v\|_{H^1}}
  \;\ge\;\frac{1}{C_\delta(\omega)}\;>\;0 .
\end{equation}
\end{lemma}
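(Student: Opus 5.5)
The plan is to exhibit $\mathcal L_{\omega^2}$ as a compact perturbation of the coercive operator from Lemma~\ref{lm:well-possed}, and then to deduce surjectivity from injectivity by the Fredholm alternative. Let $w_0=-e^{-\ri\theta}$ be as in that lemma, and let $\mathcal L_{w_0}$ be the operator associated with $\mathcal A_{\Omega_\delta,w_0}$. As noted in the proof of Proposition~\ref{prop:compact_resolvent}, this form is bounded and coercive on $H^1_0(\Omega_\delta)$. By Lax--Milgram, $\mathcal L_{w_0}:H^1_0(\Omega_\delta)\to H^{-1}(\Omega_\delta)$ is therefore an isomorphism. We then write
\begin{equation*}
  \langle\mathcal L_{\omega^2}u,v\rangle=\langle\mathcal L_{w_0}u,v\rangle-(\omega^2-w_0)(Ju,v)_{\Omega_\delta},
\end{equation*}
and we call $\mathcal K:H^1_0\to H^{-1}$, $u\mapsto(Ju,\cdot)$, the operator in the second term. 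This operator factors as the compact embedding $\iota:H^1_0(\Omega_\delta)\hookrightarrow L^2(\Omega_\delta)$ given by Rellich--Kondrachov, followed by multiplication by $J\in L^\infty$ (bounded by Lemma~\ref{lemma:bounds1}(2)), followed by the bounded map $L^2\to H^{-1}$. Hence $\mathcal K$ is compact, and $\mathcal L_{\omega^2}=\mathcal L_{w_0}\bigl(I-(\omega^2-w_0)\mathcal L_{w_0}^{-1}\mathcal K\bigr)$ is Fredholm of index zero on $H^1_0$.

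Next I show injectivity. Suppose $u\in H^1_0(\Omega_\delta)$ satisfies $\mathcal A_{\Omega_\delta,\omega^2}(u,v)=0$ for all $v$. Then $u$ is a weak solution of $-\widetilde\Delta u-\omega^2c^{-2}u=0$, so $\widetilde\Delta u=-\omega^2c^{-2}u\in L^2(\Omega_\delta)$. To see this, test with $v\in C_c^\infty$ and divide by $d_1d_2$, which is bounded away from zero; this identifies the form with $(-\widetilde\Delta u,\,d_1d_2v)$. It follows that $u\in\mathcal D(T_\delta)$ and $T_\delta u=-c^2\widetilde\Delta u=\omega^2u$. The hypothesis $\omega^2\notin\sigma_p(T_\delta)$ then forces $u=0$.

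The main obstacle, mild as it is, is this identification of the weak kernel with the operator kernel. It requires checking that $\mathcal A(u,v)=\int(-\widetilde\Delta u)\,d_1d_2\bar v$ distributionally, and that the maps $v\mapsto d_1d_2v$ and $v\mapsto v/(d_1d_2)$ preserve $C_c^\infty(\Omega_\delta)$. Both hold because $d_1,d_2$ are smooth and nonvanishing.

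Finally, an injective Fredholm operator of index zero is bijective, and its inverse is bounded by the open mapping theorem, so $C_\delta(\omega)<\infty$. For any $u$, put $F=\mathcal L_{\omega^2}u$. Then $\|u\|_{H^1}\le C_\delta(\omega)\|F\|_{H^{-1}}$, while $\|F\|_{H^{-1}}=\sup_v|\mathcal A(u,v)|/\|v\|_{H^1}$. This gives \eqref{eq:infsup} after taking the infimum over $u$.
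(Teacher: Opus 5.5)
Your proposal is correct and follows the paper's proof essentially step for step. Both split $\mathcal A_{\Omega_\delta,\omega^2}=\mathcal A_{\Omega_\delta,w_0}-(\omega^2-w_0)(J\cdot,\cdot)_{\Omega_\delta}$ into a Lax--Milgram isomorphism plus a compact term (via Rellich--Kondrachov), obtain injectivity by reading the weak kernel equation distributionally as $-c^2\widetilde\Delta u=\omega^2u$ so that $u\in\mathcal D(T_\delta)$, and treat the inf--sup condition as a restatement of the boundedness of $\mathcal L_{\omega^2}^{-1}$. Your extra detail on dividing by the smooth nonvanishing factor $d_1d_2$ usefully spells out a step the paper states in one line; note only that the pairing is $\int(-\widetilde\Delta u)\,d_1d_2\,\bar v$, as you write later, not $(-\widetilde\Delta u,d_1d_2v)$, which would conjugate $d_1d_2$.
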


\begin{proof}
Decompose
$\mathcal A_{\Omega_\delta,\omega^2}(u,v)
 =\mathcal A_{\Omega_\delta,w_0}(u,v)-(\omega^2-w_0)(Ju,v)_{\Omega_\delta}$.
By the proof of Proposition~\ref{prop:compact_resolvent}, the form
$\mathcal A_{\Omega_\delta,w_0}$ is continuous and coercive on
$H^1_0(\Omega_\delta)$, so $\mathcal L_{w_0}$ is an isomorphism onto
$H^{-1}(\Omega_\delta)$ by Lax--Milgram. The second term factors as
$H^1_0(\Omega_\delta)\hookrightarrow L^2(\Omega_\delta)
 \xrightarrow{\ J\ }L^2(\Omega_\delta)\hookrightarrow H^{-1}(\Omega_\delta)$,
in which the first embedding is compact
(Rellich--Kondrachov, as in Proposition~\ref{prop:compact_resolvent}) and
the remaining maps are bounded by Lemma~\ref{lemma:bounds1}(2). Hence
$\mathcal L_{\omega^2}$ is a compact perturbation of an isomorphism and
is therefore Fredholm of index $0$.

For injectivity, suppose $u\in H^1_0(\Omega_\delta)$ satisfies
$\mathcal A_{\Omega_\delta,\omega^2}(u,v)=0$ for all
$v\in H^1_0(\Omega_\delta)$. Testing with $v\in C^\infty_c(\Omega_\delta)$
shows that $-c^2\widetilde\Delta u=\omega^2u$ holds in the sense of
distributions; since $\omega^2u\in L^2(\Omega_\delta)$, we get
$\widetilde\Delta u\in L^2(\Omega_\delta)$ and therefore
$u\in\mathcal D(T_\delta)$ with $(T_\delta-\omega^2I)u=0$. By hypothesis
$\omega^2\notin\sigma_p(T_\delta)$, so $u=0$. A Fredholm operator of
index zero with trivial kernel is an isomorphism, and~\eqref{eq:infsup}
is the standard reformulation of the boundedness of
$\mathcal L_{\omega^2}^{-1}$.
\end{proof}

\begin{lemma}[Caccioppoli inequality for the PML operator]\label{lem:caccioppoli}
Let $W\subset\R^2\setminus\overline\Omega$ be open, let
$\chi\in C^\infty_c(W)$ with $0\le\chi\le1$, and let $U\in H^1(W)$
satisfy $\mathcal A_{W,\omega^2}(U,v)=0$ for every $v\in C_c^\infty(W)$,
with $c\equiv c_\infty$ on $W$. Then
\begin{equation}\label{eq:caccioppoli}
  \int_W\chi^2|\na U|^2
  \;\le\;
  16\left(1+\sigma_M^2\right)^{3}
  \left(\|\na\chi\|_{L^\infty}^2+k^2\right)
  \int_{\supp\chi}|U|^2 .
\end{equation}
\end{lemma}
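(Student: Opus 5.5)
We test the weak equation with $v=\chi^2U$. This test function is admissible: $U\in H^1(W)$ and $\chi^2\in C_c^\infty(W)$, so $\chi^2U\in H^1_0(W)$ with compact support in $W$, and density of $C_c^\infty(W)$ in that space extends the identity $\mathcal A_{W,\omega^2}(U,v)=0$ to it. On $W$ we have $c\equiv c_\infty$, so $J=c_\infty^{-2}d_1d_2$ and $\omega^2J=k^2d_1d_2$. Writing $a_1=d_2/d_1$ and $a_2=d_1/d_2$, and using $\p_x(\chi^2\bar U)=\chi^2\p_x\bar U+2\chi\p_x\chi\,\bar U$, the identity becomes
\begin{equation*}
  \int_W\chi^2\bigl(a_1|\p_xU|^2+a_2|\p_yU|^2\bigr)
  =-2\int_W\chi\,\bar U\bigl(a_1\p_x\chi\,\p_xU+a_2\p_y\chi\,\p_yU\bigr)
   +k^2\int_W d_1d_2\,\chi^2|U|^2 .
\end{equation*}

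Next we take real parts. Lemma~\ref{lemma:bounds1}(3) gives $\Re a_j\ge(1+\sigma_M^2)^{-1}$, so the real part of the left side bounds $(1+\sigma_M^2)^{-1}\int\chi^2|\na U|^2$ from below. On the right we use $|a_j|\le\sqrt{1+\sigma_M^2}$ from Lemma~\ref{lemma:bounds1}(1), and $|d_1d_2|\le1+\sigma_M^2$ (the computation in Lemma~\ref{lemma:bounds1}(2) with $c$ removed). The cross term is then at most $2\sqrt{1+\sigma_M^2}\,\|\na\chi\|_\infty\,\|\chi\na U\|_{L^2}\,\|U\|_{L^2(\supp\chi)}$. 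Young's inequality, $2AB\le\epsilon A^2+\epsilon^{-1}B^2$ with $\epsilon=\tfrac12(1+\sigma_M^2)^{-1}$, absorbs half of the left side. We obtain
\begin{equation*}
  \tfrac12(1+\sigma_M^2)^{-1}\int\chi^2|\na U|^2
  \le 2(1+\sigma_M^2)^2\|\na\chi\|_\infty^2\int_{\supp\chi}|U|^2
  +k^2(1+\sigma_M^2)\int_{\supp\chi}|U|^2 ,
\end{equation*}
where $\chi^2\le1$ has been used in the last term. Multiplying by $2(1+\sigma_M^2)$ gives a constant of order $4(1+\sigma_M^2)^3$, which is within the claimed $16(1+\sigma_M^2)^3$ since $1+\sigma_M^2\ge1$.

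The main point needing care is that the left side is complex. Coercivity has to come from the real parts of $a_1$ and $a_2$ separately, componentwise, and not from any bound on $|a_j|$. This works precisely because the principal part is diagonal, so no cross terms $\p_xU\,\p_y\bar U$ appear. Beyond that, the argument only requires tracking the constants so that they fit under the stated $16(1+\sigma_M^2)^3$; the generous factor $16$ leaves room for the cruder choices made above.
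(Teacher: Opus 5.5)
Your proof is correct and follows the paper's argument essentially step for step. Both test with $\chi^2U$, bound the real parts via Lemma~\ref{lemma:bounds1}(1),(3) and $|d_1d_2|\le1+\sigma_M^2$, use the same Young split with $\epsilon=\tfrac12(1+\sigma_M^2)^{-1}$, and arrive at the same intermediate bound $4(1+\sigma_M^2)^3\|\na\chi\|_{L^\infty}^2+2k^2(1+\sigma_M^2)^2$. The one addition on your side is the density justification for the admissibility of $\chi^2U$, which the paper leaves implicit.
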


\begin{proof}
The test function $v=\chi^2U$ belongs to $H^1_0(W)$, and
$\overline{\p_xv}=\chi^2\overline{\p_xU}+2\chi(\p_x\chi)\overline U$, with
the analogous identity in $y$. Hence
$\mathcal A_{W,\omega^2}(U,\chi^2U)=0$ reads
\begin{equation}
\begin{split}
  &\int_W\Bigl(\frac{d_2}{d_1}\chi^2|\p_xU|^2+\frac{d_1}{d_2}\chi^2|\p_yU|^2
  -k^2d_1d_2\chi^2|U|^2\Bigr)\\
  &\qquad=-2\int_W\chi\Bigl(\frac{d_2}{d_1}\p_xU\,\p_x\chi\,\overline U
  +\frac{d_1}{d_2}\p_yU\,\p_y\chi\,\overline U\Bigr).
\end{split}
\end{equation}
Taking real parts and using Lemma~\ref{lemma:bounds1}(1) and~(3), together
with $|d_1d_2|\le1+\sigma_M^2$,
\begin{equation}
\begin{split}
  \frac{1}{1+\sigma_M^2}\int_W\chi^2|\na U|^2
  \le\;& 2\sqrt{1+\sigma_M^2}\int_W\chi|\na\chi||\na U||U|\\
  &+k^2\left(1+\sigma_M^2\right)\int_W\chi^2|U|^2 .
\end{split}
\end{equation}
By Young's inequality,
\begin{equation}
  2\sqrt{1+\sigma_M^2}\,\chi|\na\chi||\na U||U|
  \le\frac{1}{2(1+\sigma_M^2)}\chi^2|\na U|^2
  +2\left(1+\sigma_M^2\right)^{2}|\na\chi|^2|U|^2 ;
\end{equation}
absorbing the first term on the left and multiplying by
$2(1+\sigma_M^2)$ gives
\begin{equation}
  \int_W\chi^2|\na U|^2\le
  4\left(1+\sigma_M^2\right)^{3}\|\na\chi\|^2_{L^\infty}\int_{\supp\chi}|U|^2
  +2k^2\left(1+\sigma_M^2\right)^{2}\int_{\supp\chi}|U|^2,
\end{equation}
which implies~\eqref{eq:caccioppoli}.
\end{proof}

\subsection{The convergence theorem}

\begin{thm}\label{thm:convergence}
Assume~\ref{hyp:support}--\ref{hyp:geom} and
$0<\gamma_a\le\gamma_b=\sigma_M<1$. Let $\omega>0$ be such that
$\omega^2\notin\sigma_p(T_\delta)$, let $u$ be the physical solution
given by Proposition~\ref{prop:physical} and let
$u_\delta\in H^1_0(\Omega_\delta)$ be the solution of the truncated
problem~\eqref{eq:PML_truncated_problem} with $w=\omega^2$. Then, for
every $s\in(0,1)$,
\begin{equation}\label{eq:main_estimate}
  \|u_\delta-u\|_{H^1(\Omega)}\;\le\;
  \bigl(1+C_1C_\delta(\omega)\bigr)\,
  \mathcal K_s(\omega,\delta)\,\mathcal N_R(u)\,
  e^{-k\,\Theta(s)},
\end{equation}
where $C_1$ is the continuity constant of
$\mathcal A_{\Omega_\delta,\omega^2}$, $C_\delta(\omega)$ is as in
Lemma~\ref{lem:infsup},
$\mathcal N_R(u)=\|u\|_{L^2(\p B_R)}+\|\p u/\p\n\|_{L^2(\p B_R)}$, which
is finite for every $c$ satisfying~\ref{hyp:support}--\ref{hyp:positive}
by Proposition~\ref{prop:physical}(\ref{it:phys_exist}) and the trace
theorem, and
\begin{equation}\label{eq:def_Ks}
\begin{split}
  \mathcal K_s(\omega,\delta)
  =\;&C_\sigma\left(1+k+\frac{1}{(1-s)\delta_m}\right)
   \bigl|\Omega^{(2)}\bigr|^{1/2}\;
   \frac{\sqrt{2\pi R}}{4}\sqrt{\frac{2}{\pi k\Theta(s)}}\\
  &\times\left[1+\frac{k\Lambda_\delta}{\Theta(s)}
   \left(1+\frac{2}{\sqrt{2\pi k\Theta(s)}}\right)\right],
\end{split}
\end{equation}
with $\delta_m=\min\{\delta_a,\delta_b\}$ and $C_\sigma$ depending only
on $\sigma_M$. In particular $\mathcal K_s$ grows at most polynomially
in $k$ and in $\Theta(s)^{-1}$.
\end{thm}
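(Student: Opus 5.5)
The plan is a lifting argument around the exact PML extension $\hat u$ of Proposition~\ref{prop:hat_u}. We choose a cut-off $\chi\in C^\infty(\overline{\Omega_\delta})$ with $0\le\chi\le1$, $\chi\equiv1$ on $\Omega^{(s)}$ and $\chi\equiv0$ near $\p\Omega_\delta$. It should vary only in the strip $\Omega_\delta\setminus\Omega^{(s)}$, of width at least $(1-s)\delta_m$, with $\|\na\chi\|_{L^\infty}\le C/((1-s)\delta_m)$.

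Set $g=(1-\chi)\hat u$. This function is supported in $\overline{\Omega_\delta}\setminus\Omega^{(s)}$, where $\hat u=\mathcal U$ is smooth, so $\chi\hat u\in H^1_0(\Omega_\delta)$. Put $e=u_\delta-\chi\hat u\in H^1_0(\Omega_\delta)$. Since $\chi\equiv1$ on $\Omega$, we have $u_\delta-u=e$ on $\Omega$, and therefore
\[
\|u_\delta-u\|_{H^1(\Omega)}\le\|e\|_{H^1(\Omega_\delta)}.
\]
By~\eqref{eq:hatu_weak} and the definition of $u_\delta$,
\[
\mathcal A_{\Omega_\delta,\omega^2}(e,v)=\mathcal A_{\Omega_\delta,\omega^2}(g,v)\qquad\text{for all }v\in H^1_0(\Omega_\delta).
\]
The inf--sup bound of Lemma~\ref{lem:infsup} together with continuity of the form then gives
\[
\|e\|_{H^1}\le C_\delta(\omega)C_1\|g\|_{H^1(\Omega_\delta\setminus\Omega^{(s)})}.
\]
If one prefers to write $u_\delta-u=(u_\delta-\hat u)+(\hat u-u)$ instead, the factor $(1+C_1C_\delta)$ appears naturally: the extra term $\|g\|$ accounts for the ``$1$''. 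Either way it remains to bound $\|g\|_{H^1}$.

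To do this we write
\[
\|g\|_{H^1}\le(1+\|\na\chi\|_\infty)\|\mathcal U\|_{L^2(\mathrm{strip})}+\|\na\mathcal U\|_{L^2(\mathrm{strip})}.
\]
The $L^2$ norm of $\mathcal U$ on the strip is at most $|\Omega^{(2)}|^{1/2}$ times the pointwise bound~\eqref{eq:U_pointwise}, which is exactly the factor $\frac{\sqrt{2\pi R}}4\sqrt{2/(\pi k\Theta)}\,[\cdots]e^{-k\Theta(s)}$ in $\mathcal K_s$, multiplied by $\mathcal N_R(u)$.

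The gradient term is the main obstacle, because~\eqref{eq:U_pointwise} controls only $\mathcal U$ itself. We would use the Caccioppoli inequality, Lemma~\ref{lem:caccioppoli}, on $W=\Omega^{(2)}\setminus\overline\Omega$, where $c\equiv c_\infty$ and $\mathcal U$ solves the PML equation by Lemma~\ref{lem:continuation}(\ref{it:cont3}). The cut-off there should be $\equiv1$ on the strip and supported in $\overline{\Omega^{(2)}}\setminus\Omega^{(s')}$ for some slightly smaller $s'<s$. This is why the collar $\Omega^{(2)}$ beyond $\p\Omega_\delta$ was introduced: the pointwise bound holds out to $\Omega^{(2)}$, so no boundary term arises at $\p\Omega_\delta$.

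The danger is the inner edge: we need a decay rate on the enlarged support that is still essentially $e^{-k\Theta(s)}$. We would avoid losing the exponent as follows. Let the Caccioppoli cut-off rise across the inner portion of the strip itself, and split the gradient estimate into two pieces. On the region where $\chi$ varies, we still need $\na\mathcal U$, so there we apply~\eqref{eq:U_pointwise} to $\na\mathcal U$ directly. Differentiating~\eqref{eq:def_U} under the integral costs only further polynomial factors in $k\Lambda_\delta/\Theta$, which $C_\sigma$ and the bracket can absorb. Outside that region we use Caccioppoli. This yields the factor $C_\sigma(1+k+1/((1-s)\delta_m))$ with $C_\sigma$ depending only on $\sigma_M$.

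Collecting the estimates gives~\eqref{eq:main_estimate}. The polynomial growth of $\mathcal K_s$ in $k$ and $\Theta(s)^{-1}$ is then read off from~\eqref{eq:def_Ks}.
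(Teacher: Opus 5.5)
Your reduction matches the paper's. Your $g=(1-\chi)\hat u$ is the paper's lifting $E$, the error identity $\mathcal A_{\Omega_\delta,\omega^2}(e,\cdot)=\mathcal A_{\Omega_\delta,\omega^2}(g,\cdot)$ holds, and since $e=u_\delta-u$ on $\Omega$ you even get $C_1C_\delta(\omega)$ in place of $1+C_1C_\delta(\omega)$.

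The gap is in the gradient term. The ``danger at the inner edge'' is self-inflicted. It comes from replacing $\|(1-\chi)\na\mathcal U\|_{L^2}$ by the unweighted $\|\na\mathcal U\|_{L^2(\text{strip})}$. After that, any Caccioppoli cut-off must equal $1$ up to $\p\Omega^{(s)}$, so its support reaches into $\Omega^{(s)}$.

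Your repair is a pointwise bound on $\na\mathcal U$ obtained by differentiating~\eqref{eq:def_U}, and it does not give the stated $\mathcal K_s$. The single-layer term is harmless. The $\x$-gradient of the double-layer kernel, however, involves $d_j\,\p_{z_i}\p_{z_j}\Phi$. This contains $k^2H_0^{(1)}(k\rho)\,z_iz_j/\rho^2$ and $k\,H_1^{(1)}(k\rho)\,z_iz_j/\rho^3$, i.e.\ prefactors of order $(k\Lambda_\delta/\Theta(s))^2$ and $k\Lambda_\delta^2/\Theta(s)^3$.

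These prefactors are not bounded by a constant depending only on $\sigma_M$ times $\bigl(1+k+1/((1-s)\delta_m)\bigr)\bigl[1+k\Lambda_\delta\Theta(s)^{-1}(\cdots)\bigr]$. That would require $\Lambda_\delta/\Theta(s)$ to be controlled by $\sigma_M$ alone. But $\Theta(s)\le\Sigma_1(s)\le\gamma_a s\delta_a$ and $\Lambda_\delta\ge a$, so $\Lambda_\delta/\Theta(s)\ge a/(\gamma_a s\delta_a)$, which is unbounded. Your claim that ``$C_\sigma$ and the bracket can absorb'' these factors is therefore false. Your route yields the same exponential rate with a different, larger polynomial prefactor, not~\eqref{eq:main_estimate} with~\eqref{eq:def_Ks}.

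The missing idea is to keep the weight and use the lifting cut-off itself in Lemma~\ref{lem:caccioppoli}. Extend $\tilde\chi=1-\chi$ by $1$ across $\p\Omega_\delta$ and let it fall to $0$ inside the collar, say between $\Omega^{(5/4)}$ and $\Omega^{(3/2)}$. Then $\tilde\chi\in C_c^\infty(\Omega^{(2)}\setminus\overline\Omega)$, $\|\na\tilde\chi\|_{L^\infty}\le C/((1-s)\delta_m)$, and
\[
\int_{\Omega_\delta}(1-\chi)^2|\na\mathcal U|^2\le\int\tilde\chi^2|\na\mathcal U|^2\le16(1+\sigma_M^2)^3\bigl(\|\na\tilde\chi\|_{L^\infty}^2+k^2\bigr)\int_{\supp\tilde\chi}|\mathcal U|^2 .
\]
The right-hand side sees $\mathcal U$ only on $\supp\tilde\chi\subset\overline{\Omega^{(2)}}\setminus\Omega^{(s)}$, where~\eqref{eq:U_pointwise} holds at level $s$. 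Nothing is lost in the exponent, and no derivative of the representation formula is needed.

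This is exactly the paper's argument. It takes $E=\chi\hat u$ with $\chi\in C_c^\infty(\Omega^{(2)}\setminus\overline{\Omega^{(s)}})$ and $\chi\equiv1$ near $\p\Omega_\delta$, and uses the same $\chi$ as the Caccioppoli cut-off. That gives~\eqref{eq:E_bound}, and hence precisely the factor $C_\sigma\bigl(1+k+1/((1-s)\delta_m)\bigr)|\Omega^{(2)}|^{1/2}$ in $\mathcal K_s$.
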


\begin{proof}
Fix $s\in(0,1)$, set $s'=(1+s)/2$ and choose a cut-off
$\chi\in C_c^\infty\bigl(\Omega^{(2)}\setminus\overline{\Omega^{(s)}}\bigr)$
with $0\le\chi\le1$ and
\begin{equation}\label{eq:chi_props}
  \chi\equiv1 \ \text{ on }\ \overline{\Omega^{(5/4)}}\setminus\Omega^{(s')},
  \qquad
  \supp\chi\subset\overline{\Omega^{(3/2)}}\setminus\Omega^{(s)},
  \qquad
  \|\na\chi\|_{L^\infty}\le\frac{C}{(1-s)\,\delta_m},
\end{equation}
which is possible because the two transition collars have widths
$(s'-s)\delta_j=\tfrac12(1-s)\delta_j$ and $\tfrac14\delta_j$
respectively, both bounded below by a multiple of $(1-s)\delta_m$. Note
that $\chi\equiv1$ on a neighbourhood of $\p\Omega_\delta$, since
$s'<1<5/4$. Set $E=\chi\hat u$, restricted to $\Omega_\delta$; by
Proposition~\ref{prop:hat_u}, $E\in H^1(\Omega_\delta)$. Since
$\chi\equiv1$ on $\p\Omega_\delta$, the function
\begin{equation}
  e:=u_\delta-\hat u+E
\end{equation}
has vanishing trace on $\p\Omega_\delta$, i.e.
$e\in H^1_0(\Omega_\delta)$. For every $v\in H^1_0(\Omega_\delta)$,
by~\eqref{eq:PML_truncated_problem} and~\eqref{eq:hatu_weak},
\begin{equation}
  \mathcal A_{\Omega_\delta,\omega^2}(e,v)
  =\underbrace{\mathcal A(u_\delta,v)-\mathcal A(\hat u,v)}_{=(Jf,v)-(Jf,v)=0}
  +\,\mathcal A_{\Omega_\delta,\omega^2}(E,v)
  =\mathcal A_{\Omega_\delta,\omega^2}(E,v)=:\langle G,v\rangle .
\end{equation}
Although $E\notin H^1_0(\Omega_\delta)$, the map
$v\mapsto\mathcal A_{\Omega_\delta,\omega^2}(E,v)$ is a bounded
antilinear functional on $H^1_0(\Omega_\delta)$, so $G\in
H^{-1}(\Omega_\delta)$ with $\|G\|_{H^{-1}}\le C_1\|E\|_{H^1(\Omega_\delta)}$.
Thus $\mathcal L_{\omega^2}e=G$, and Lemma~\ref{lem:infsup} gives
$\|e\|_{H^1}\le C_\delta(\omega)\,C_1\|E\|_{H^1}$. Consequently
\begin{equation}\label{eq:err_vs_E}
  \|u_\delta-\hat u\|_{H^1(\Omega_\delta)}
  \le\|e\|_{H^1}+\|E\|_{H^1}
  \le\bigl(1+C_1C_\delta(\omega)\bigr)\|E\|_{H^1(\Omega_\delta)} .
\end{equation}
Since $\hat u=u$ on $\Omega$ by construction, the left-hand side
dominates $\|u_\delta-u\|_{H^1(\Omega)}$.

It remains to estimate $\|E\|_{H^1(\Omega_\delta)}$. Set
$W=\Omega^{(2)}\setminus\overline{\Omega^{(s)}}$, an open subset of
$\R^2\setminus\overline\Omega$ on which $\hat u=\mathcal U$ solves the
homogeneous PML equation with $c\equiv c_\infty$
(Lemma~\ref{lem:continuation}(\ref{it:cont3})), and which contains
$\supp\chi$ as a compact subset by~\eqref{eq:chi_props}. Then
\begin{equation}
  \|E\|_{H^1(\Omega_\delta)}\le\|\chi\hat u\|_{L^2(W)}
  +\|(\na\chi)\hat u\|_{L^2(W)}+\|\chi\na\hat u\|_{L^2(W)},
\end{equation}
and Lemma~\ref{lem:caccioppoli}, applicable precisely because
$\chi\in C^\infty_c(W)$, bounds the last term by
$C_\sigma\left(\|\na\chi\|_{L^\infty}+k\right)\|\hat u\|_{L^2(\supp\chi)}$.
Therefore
\begin{equation}\label{eq:E_bound}
  \|E\|_{H^1(\Omega_\delta)}
  \le C_\sigma\left(1+k+\|\na\chi\|_{L^\infty}\right)
      \bigl|\Omega^{(2)}\bigr|^{1/2}
      \sup_{\overline{\Omega^{(2)}}\setminus\Omega^{(s)}}|\mathcal U| .
\end{equation}
Bounding the supremum by
Lemma~\ref{lem:continuation}(\ref{it:cont4}) and inserting
into~\eqref{eq:err_vs_E} yields~\eqref{eq:main_estimate}
with~\eqref{eq:def_Ks}.
\end{proof}

\begin{cor}\label{cor:exp_rate}
Assume the hypotheses of Theorem~\ref{thm:convergence}
and~\ref{hyp:saturate}, and let $\delta_a=\delta_b=\delta$ and
$\gamma_a=\gamma_b=\gamma$. Then, for any fixed $s\in(\tfrac12,1)$ and
all $\delta\ge\delta_1:=(a+b+R)/\bigl(4(1+\sigma_M)\bigr)$,
\begin{equation}\label{eq:cor_rate}
  \|u_\delta-u\|_{H^1(\Omega)}
  \;\le\;C(\omega,\delta)\,
  \exp\left(-\,\frac{\omega}{c_\infty}\,
  \frac{s\bigl(s-\tfrac12\bigr)}{8\,(1+\sigma_M)}\,
  \gamma\,\delta\right),
\end{equation}
where $C(\omega,\delta)$ is the product of the first three factors
in~\eqref{eq:main_estimate}. The error therefore decays exponentially
in $\omega\gamma\delta$, which is the classical PML rate.
\end{cor}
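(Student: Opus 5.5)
The plan is to read \eqref{eq:cor_rate} off the main estimate \eqref{eq:main_estimate} of Theorem~\ref{thm:convergence}. The three prefactors $(1+C_1C_\delta(\omega))$, $\mathcal K_s(\omega,\delta)$ and $\mathcal N_R(u)$ will be absorbed into $C(\omega,\delta)$. It then suffices to prove the lower bound
\begin{equation*}
  \Theta(s)\;\ge\;\frac{s\bigl(s-\tfrac12\bigr)}{8(1+\sigma_M)}\,\gamma\,\delta
  \qquad\text{for } s\in(\tfrac12,1),\ \delta\ge\delta_1 .
\end{equation*}
Since $k=\omega/c_\infty$, this gives $e^{-k\Theta(s)}$ bounded by the exponential in \eqref{eq:cor_rate}. (Note that $\Theta(s)$ enters $\mathcal K_s$ only through negative powers. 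Those factors are simply kept inside $C(\omega,\delta)$ and are not re-estimated here.)

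\emph{Step 1: the absorption integrals.} With $\delta_a=\delta_b=\delta$, hypothesis~\ref{hyp:saturate} gives $\sigma_1\equiv\gamma$ on $[a+\tfrac12\delta,\infty)$. Since $\sigma_1\ge0$,
\begin{equation*}
  \Sigma_1(s)=\int_a^{a+s\delta}\sigma_1\;\ge\;\int_{a+\delta/2}^{a+s\delta}\gamma\,dt=\gamma\bigl(s-\tfrac12\bigr)\delta ,
\end{equation*}
and this is positive precisely because $s>\tfrac12$. The same bound holds for $\Sigma_2(s)$. For the distance factors, \ref{hyp:geom} gives $a>R$, so $a+s\delta-R\ge s\delta$, and likewise $b+s\delta-R\ge s\delta$. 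Hence the minimum in \eqref{eq:def_Theta} is at least $\gamma s(s-\tfrac12)\delta^2$.

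\emph{Step 2: bounding $\Lambda_\delta$.} This is the only step needing care, and it is where $\delta_1$ comes from. I would use $\sqrt{p^2+q^2}\le p+q$ and $\sqrt{2}\le2$ to get
\begin{equation*}
  \Lambda_\delta\le(a+2\delta)+(b+2\delta)+R+2\sqrt2\,\sigma_M\delta\le a+b+R+4(1+\sigma_M)\delta .
\end{equation*}
The choice $\delta\ge\delta_1=(a+b+R)/(4(1+\sigma_M))$ is exactly what makes $a+b+R\le4(1+\sigma_M)\delta$. Therefore $\Lambda_\delta\le8(1+\sigma_M)\delta$.

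\emph{Step 3: conclusion.} Dividing the bound of Step~1 by that of Step~2 gives $\Theta(s)\ge s(s-\tfrac12)\gamma\delta/(8(1+\sigma_M))$. Multiplying by $k=\omega/c_\infty$ and inserting the result into \eqref{eq:main_estimate} yields \eqref{eq:cor_rate}. No obstacle is expected beyond keeping the constants consistent in Step~2.
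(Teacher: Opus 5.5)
Your proposal is correct and follows the paper's proof essentially step for step. It uses the same lower bounds $\Sigma_j(s)\ge\gamma\delta(s-\tfrac12)$ from \ref{hyp:saturate} and $a+s\delta-R\ge s\delta$ from \ref{hyp:geom}, and the same estimate $\Lambda_\delta\le(a+b+R)+4(1+\sigma_M)\delta\le 8(1+\sigma_M)\delta$ for $\delta\ge\delta_1$, with the three prefactors of \eqref{eq:main_estimate} kept in $C(\omega,\delta)$.
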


\begin{proof}
By~\ref{hyp:saturate}, $\sigma_1\equiv\gamma$ on
$[a+\tfrac12\delta,\,a+s\delta]$, so
$\Sigma_1(s)\ge\gamma\delta\bigl(s-\tfrac12\bigr)$ for
$s\ge\tfrac12$, and likewise for $\Sigma_2$. Moreover $a>R$
by~\ref{hyp:geom}, whence $a+s\delta-R\ge s\delta$, and likewise in
$b$. Inserting both in~\eqref{eq:def_Theta} gives
\begin{equation}\label{eq:Theta_lower}
  \Theta(s)\;\ge\;\frac{s\bigl(s-\tfrac12\bigr)\,\gamma\,\delta^2}
                       {\Lambda_\delta}.
\end{equation}
Finally
$\Lambda_\delta\le(a+b+R)+4(1+\sigma_M)\delta$
by~\eqref{eq:def_Lambda} and $\sqrt{p^2+q^2}\le p+q$, so that
$\Lambda_\delta\le8(1+\sigma_M)\delta$ for $\delta\ge\delta_1$.
\end{proof}

\begin{rmk}\label{rmk:rate_sharpness}
Two comments on~\eqref{eq:cor_rate}. First, the step
$a+s\delta-R\ge s\delta$ is what makes the exponent grow linearly in
$\delta$; replacing it by the cruder $a+s\delta-R\ge a-R$, which is
what one is tempted to write, produces an exponent that
\emph{saturates} as $\delta\to\infty$, because $\Lambda_\delta$ grows
linearly, and then~\eqref{eq:cor_rate} would assert no decay in the
layer width at all. Second, the constant $s(s-\tfrac12)/8(1+\sigma_M)$
is far from optimal: for the configuration of
Section~\ref{sec:numerics} it is smaller than the exact
$\Theta(s)/\delta$ by a factor of about $5$, and $\Theta(s)$ itself is
in turn smaller than the observed rate by a further factor of about
$12$. The estimate that should be used in practice is therefore
$\Theta(s)$ of~\eqref{eq:def_Theta} itself; \eqref{eq:cor_rate} is
recorded only to exhibit the dependence on $\omega\gamma\delta$. The
same rate structure --- exponential in the product of frequency,
absorption strength and layer width --- appears in the constant-speed
Cartesian analyses of Bramble and Pasciak~\cite{BramblePasciak2013} and
of Wang and Zheng~\cite{WangZheng2026}, and, for a radial layer,
in~\cite{GalkowskiLafontaineSpence2023}.
\end{rmk}

\begin{cor}[Every frequency, uniformly in the layer width]
\label{cor:every_omega}
Let $\omega>0$ be \emph{arbitrary}. Then there is $\delta_0(\omega)$ such that for all
$\delta_a,\delta_b\ge\delta_0(\omega)$ the truncated
problem~\eqref{eq:PML_truncated_problem} with $w=\omega^2$ is uniquely
solvable and
\begin{equation}\label{eq:main_uniform}
  \|u_\delta-u\|_{H^1(\Omega)}\;\le\;
  \bigl(1+C_1C_\star(\omega)\bigr)\,
  \mathcal K_s(\omega,\delta)\,\mathcal N_R(u)\,e^{-k\,\Theta(s)},
\end{equation}
where $C_\star(\omega)$ is the constant of
Proposition~\ref{prop:uniform} and does \emph{not} depend on $\delta$.
\end{cor}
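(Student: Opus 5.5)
The plan is to combine Proposition~\ref{prop:uniform} with Theorem~\ref{thm:convergence}; the only new input is the identification of the abstract constant $C_\delta(\omega)$ with the uniform constant $C_\star(\omega)$. Fix an arbitrary $\omega>0$. I take $\delta_0(\omega)$ to be the threshold of Proposition~\ref{prop:uniform}, enlarged if necessary so that \ref{hyp:saturate} holds, i.e.\ $\delta_0\ge2\max\{\delta^{\mathrm t}_a,\delta^{\mathrm t}_b\}$. This is legitimate because the transition widths are fixed data independent of $\delta_a,\delta_b$, as stressed in Remark~\ref{rmk:H5_free}.

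\emph{Step 1: unique solvability.} For $\delta_a,\delta_b\ge\delta_0(\omega)$, Proposition~\ref{prop:uniform} gives $\omega^2\notin\sigma_p(T_\delta)$. Lemma~\ref{lem:infsup} then makes $\mathcal L_{\omega^2}:H^1_0(\Omega_\delta)\to H^{-1}(\Omega_\delta)$ an isomorphism. Since $Jf\in L^2$ defines an element of $H^{-1}(\Omega_\delta)$ by Lemma~\ref{lemma:bounds1}(2), the truncated problem~\eqref{eq:PML_truncated_problem} with $w=\omega^2$ has exactly one solution $u_\delta$.

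\emph{Step 2: the error estimate.} The hypothesis $\omega^2\notin\sigma_p(T_\delta)$ of Theorem~\ref{thm:convergence} is now verified, and so are its structural assumptions \ref{hyp:support}--\ref{hyp:geom} and $0<\gamma_a\le\gamma_b<1$. The theorem therefore gives~\eqref{eq:main_estimate} with the factor $1+C_1C_\delta(\omega)$. It remains to replace $C_\delta(\omega)$ by $C_\star(\omega)$.

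\emph{Step 3: replacing $C_\delta$ by $C_\star$.} The final assertion of Proposition~\ref{prop:uniform} is that $C_\delta(\omega)\le C_\star(\omega)$ for all such $\delta$. This is where I expect the only real care to be needed. Lemma~\ref{lem:infsup} defines $C_\delta$ as $\|\mathcal L_{\omega^2}^{-1}\|_{H^{-1}\to H^1}$, which is the reciprocal of the inf--sup constant. Inequality~\eqref{eq:uniform_infsup} states exactly that this inf--sup constant is at least $1/C_\star$, provided the $H^{-1}$ norm is taken as the dual norm of $H^1_0$ with the $H^1$ norm. With that norm convention the two definitions match, and the inequality $C_\delta\le C_\star$ follows.

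In the proof of Theorem~\ref{thm:convergence}, $C_\delta$ enters only through the single bound $\|e\|_{H^1}\le C_\delta C_1\|E\|_{H^1}$. That step can be redone directly from~\eqref{eq:uniform_infsup}: taking the supremum over $v$ of $|\mathcal A(e,v)|/\|v\|_{H^1}$ gives at most $C_1\|E\|_{H^1}$, hence $\|e\|_{H^1}\le C_\star C_1\|E\|_{H^1}$. This yields~\eqref{eq:main_uniform}. Finally, $C_\star$ is independent of $\delta$ because Proposition~\ref{prop:uniform} compares all truncations against one fixed plane operator. The continuity constant $C_1$ depends only on $\sigma_M$, $c_{\min}$ and $\omega$, so it is also independent of $\delta$.
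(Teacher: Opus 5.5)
Your proposal is correct and is essentially the paper's proof. Proposition~\ref{prop:uniform} supplies both $\omega^2\notin\sigma_p(T_\delta)$ and $C_\delta(\omega)\le C_\star(\omega)$ for $\delta_a,\delta_b\ge\delta_0(\omega)$, and inserting these into Theorem~\ref{thm:convergence} gives~\eqref{eq:main_uniform}. Your extra steps --- enlarging $\delta_0$ so that~\ref{hyp:saturate} holds, checking the dual-norm convention behind $C_\delta\le C_\star$, and getting unique solvability from Lemma~\ref{lem:infsup} rather than Proposition~\ref{prop:truncated_wellposed} --- only make explicit what the paper's one-line argument leaves implicit.
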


\begin{proof}
By Proposition~\ref{prop:uniform}, for $\delta_a,\delta_b\ge\delta_0$ we
have $\omega^2\notin\sigma_p(T_\delta)$ --- so
Proposition~\ref{prop:truncated_wellposed} applies without any
exceptional set --- and $C_\delta(\omega)\le C_\star(\omega)$. Insert
this in~\eqref{eq:main_estimate}.
\end{proof}

\begin{rmk}[What remains open]\label{rmk:open}
Corollary~\ref{cor:every_omega} removes the two limitations that the
elementary theory of Section~\ref{sec:truncated} left behind: the
exceptional set of frequencies and the dependence of the stability
constant on the size of the computational domain. Two features of the
estimate nevertheless deserve comment.

\emph{(i) Dependence on $\omega$.} Neither the threshold
$\delta_0(\omega)$ nor the constant $C_\star(\omega)$ is shown to be
uniform in $\omega$, and this is the main problem left open by the
present work. It is worth locating it precisely within a literature
that has moved considerably since~\cite{KimPasciak2010a,KimPasciak2010b}.

For a \emph{radial} layer the question is settled: Galkowski,
Lafontaine and Spence~\cite{GalkowskiLafontaineSpence2023} prove that
PML truncation is exponentially accurate with constants explicit in the
frequency, in the generality of black-box scattering, so that even
strongly trapping configurations are covered provided the cut-off
resolvent obeys an exponential bound. Their rate,
$e^{-c(w\tan\theta-C)k}$ in the layer width $w$ and scaling angle
$\theta$, is the exact analogue of our $e^{-k\Theta(s)}$, with
$\tan\theta$ playing the role of $\gamma_M$.

For a \emph{Cartesian} layer the picture is more fragmentary. With a
constant speed and a sound-soft starlike obstacle, Wang and
Zheng~\cite{WangZheng2026} have recently obtained a wavenumber-explicit
inf--sup constant of the optimal order $O(k^{-1})$ for the truncated
uniaxial problem, together with a truncation error
$O\bigl(k^2e^{-k\sigma_0\delta/10}\bigr)$, and their threshold on
$\sigma_0\delta$ is independent of $k$. Their route is instructive: in
place of a perturbation argument they estimate the stretched Green
kernel directly, splitting it dyadically and bounding the resulting
Carleman operators by a weighted Schur test. Two by-products are worth
noting. They work with a \emph{piecewise constant} profile, the case
that~\cite[Rem.~3.5]{KimPasciak2010b} explicitly excludes, and they must
therefore confront the corner singularities of the rectangle, which they
resolve by a piecewise $H^{1+s}$ regularity theory; and the whole-plane
well-posedness they start from is quoted
from~\cite{KimPasciak2010b}, exactly as ours is.

That route does not transfer to a variable speed by perturbation, and it
is worth saying why. Writing $\mathcal A$ for the form with $J=c^{-2}d_1d_2$
and $\mathcal A_\infty$ for the one with $c\equiv c_\infty$, the
difference is $\omega^2\bigl(c_\infty^{-2}-c^{-2}\bigr)d_1d_2(u,v)$,
which in the $k$-weighted norm
$\|v\|^2=\|\na v\|_{L^2}^2+k^2\|v\|^2_{L^2}$ is bounded by
$C\|u\|\,\|v\|$ with $C$ \emph{independent of $k$}, since
$k\|u\|_{L^2}\le\|u\|$. Composed with a solution operator of norm
$O(k)$ this gives $O(k)$, not a contraction: the Neumann series
diverges at high frequency. A compactly supported perturbation of the
speed is therefore invisible to the qualitative Fredholm theory of
Lemma~\ref{lem:reduction} and, at the same time, not small enough for
the quantitative theory. This is the precise sense in which
frequency-explicit bounds for a variable speed require geometry, that
is, a non-trapping hypothesis, rather than perturbation. With a variable
speed the sharpest available statement for a Cartesian layer appears to
be~\cite[Thm.~4.6]{GalkowskiGongGrahamLafontaineSpence2024}, which gives
$O(k^{-\infty})$ rather than exponential decay, under $c\in C^\infty$
and non-trapping.

The result of this paper is therefore complementary rather than
superseded: we obtain an exponential rate for a Cartesian layer over a
variable and merely bounded speed, with a constant that is not explicit
in $\omega$. Closing that gap seems to require a quantitative form of
the limiting absorption principle in place of the qualitative Fredholm
argument used here; the direct kernel estimates
of~\cite{WangZheng2026} and the frequency-explicit bounds for bounded
coefficients of Chaumont-Frelet and
Spence~\cite{ChaumontFreletSpence2023} are the two routes we consider
most promising. Numerically, the spectrum of
$T_\delta$ stays bounded away from the positive real axis in the
resolved part of the spectrum, with no sign of degeneration under
refinement, which is consistent with a threshold $\delta_0$ that is
mild in practice.

\emph{(ii) The factor $\mathcal N_R(u)$.} It is a norm of the Cauchy
data of the physical solution on $\p B_R$, and is controlled by the
local resolvent of $-c^2\Delta$: by the trace theorem,
$\mathcal N_R(u)\le C_R\|u\|_{H^2(B_{R'}\setminus B_{R''})}
\le C_R'\,\Lambda(\omega)\|f\|_{L^2}$ for radii $R''<R<R'$, where
$\Lambda(\omega)$ is governed by
$\bigl\|\chi(-c^2\Delta-(\omega^2+\ri0))^{-1}\chi\bigr\|$. Under the
standing hypotheses~\ref{hyp:support}--\ref{hyp:positive} alone,
$\mathcal N_{R}(u)$ is finite for each $\omega>0$, but nothing is
claimed about its growth: at this regularity the medium may well be
trapping --- a piecewise constant low-velocity inclusion is, by total
internal reflection, as Remark~\ref{rmk:mollify} records --- and
$\Lambda(\omega)$ may then grow exponentially along a sequence of
quasi-resonant frequencies. If, on the other hand, $c\in C^\infty(\R^2)$
satisfies the non-trapping condition~\ref{hyp:morawetz}, the
resolvent bound~\eqref{eq:ntrap_resolvent} of
Burq~\cite{Burq2002} applies --- with the constant made explicit
in~\cite{GalkowskiSpenceWunsch2020} --- and $\Lambda(\omega)$ is then
$O(\omega^{-1})$, so in particular grows at most polynomially in
$\omega$, so that the exponential factor
in~\eqref{eq:cor_rate} dominates for large $\omega$. This is precisely
what the non-trapping analysis of Section~\ref{sec:preliminaries} buys,
and it is the only place where the variability of $c$ enters the size of
the error.
\end{rmk}

\begin{rmk}[Relation to the literature]
For a constant propagation speed, exponential convergence of the
Cartesian PML in $\R^2$ was obtained by Kim and
Pasciak~\cite{KimPasciak2010b}, building on their spectral analysis of
the same operator~\cite{KimPasciak2010a} and on the earlier finite-PML
analysis of Bramble and Pasciak~\cite{BramblePasciak2007}; the convex
and spherical geometries were treated by Lassas and
Somersalo~\cite{LassasSomersalo1998,LassasSomersalo2001}, and adaptive
variants with $\omega$-explicit constants by Chen and
Liu~\cite{ChenLiu2005}. Layered Maxwell media were treated by Duan,
Jiang and Zheng~\cite{DuanJiangZheng2020}.

The closest constant-speed antecedent is Bramble and
Pasciak~\cite{BramblePasciak2013}, who prove inf--sup conditions for the
Cartesian PML on $\R^n$, on the exterior of a scatterer and on the
truncated square, with constants independent of both the layer width and
the absorption strength, and deduce an error of order
$e^{-c\sigma_0M}$ in the region of interest; their analysis covers
$n=2,3$ and both smooth and piecewise constant profiles. That is exactly
the structure of Sections~\ref{sec:allomega}
and~\ref{sec:convergence}, for a homogeneous medium.

Three recent lines of work are also close enough to require comment. Lu,
Lai and Wu~\cite{LuLaiWu2024} establish well-posedness of the uniaxial PML
for a two-layer medium, and obtain the stronger conclusion that the
truncated problem is resonance-free \emph{unconditionally} in the layer
width and absorption strength; the price is a rigid geometry, a single
flat infinite interface, exploited through an explicit Green function.
Jiang, Sun, Sun and Ma~\cite{JiangSunSunMa2024} extend exponential
convergence of the Cartesian PML to a periodic heterogeneous medium,
but by way of homogenisation, so that the estimate concerns the
homogenised coefficient. Galkowski, Gong, Graham, Lafontaine and
Spence~\cite{GalkowskiGongGrahamLafontaineSpence2024} treat a Cartesian
PML with a variable speed, with constants explicit in the wavenumber,
under $c\in C^\infty$ and non-trapping, and obtain $O(k^{-\infty})$
decay.

Against this background the novelty of Theorem~\ref{thm:convergence} is
not that a discontinuous or variable speed is admitted \emph{per se},
but that no structure whatever is imposed on it: neither a single flat
interface, nor periodicity and a separation of scales, nor smoothness.
Hypotheses~\ref{hyp:support}--\ref{hyp:positive} ask only for a
two-sided bound and homogeneity outside a disc, and the conclusion is
exponential rather than super-polynomial. What is \emph{not} obtained
is the wavenumber-explicit control available in each of those three
settings; see Remark~\ref{rmk:open}. Variability enters
through the existence theory of Proposition~\ref{prop:physical} and
through the factor $\mathcal N_R(u)$, while the exponential rate itself
is produced entirely in the homogeneous exterior region and is
therefore identical to the constant-speed one. By
Section~\ref{sec:allomega} the range of admissible frequencies and the
uniformity in $\delta$ also match the constant-speed results.
\end{rmk}

\section{Numerical experiments}
\label{sec:numerics}

We report experiments designed to test the two predictions of
Section~\ref{sec:convergence} that are specific to the variable-speed
setting: that the error decays exponentially in $\delta$ and in
$\omega$, and that the inhomogeneity of $c$ does not degrade the rate.

\subsection{Test medium and discretisation}

We take $\Omega=(-1,1)^2$, $c_\infty=1$, $\gamma_a=\gamma_b=0.9$ and
$\delta_a=\delta_b=\delta$. The transition widths are fixed at
$\delta^{\mathrm t}_a=\delta^{\mathrm t}_b=0.075$, independently of $\delta$, as required by the
convention of Section~\ref{sec:preliminaries};
hypothesis~\ref{hyp:saturate} then holds for every $\delta\ge0.15$, and
all the layer widths used below satisfy this. The medium is the radial
low-velocity inclusion
\begin{equation}\label{eq:test_c}
  c(\varrho)=c_\infty\Bigl(1-\ve\,
  e^{\,s/(s-1)}\Bigr),\qquad s=(\varrho/R)^2,\qquad R=\tfrac12,
\end{equation}
which is $C^\infty$, equals $c_\infty$ for $\varrho\ge R$, and has
$c_{\text{min}}=(1-\ve)c_\infty$ at the origin. We
use $\ve=0.4$, so that $c$ varies by $40\%$ across the region of
interest. This medium is a strict test of
hypothesis~\ref{hyp:morawetz}: with $\x_0=0$ one computes
\begin{equation}
  \max_{\varrho}\ \varrho\,c'(\varrho) = 0.672 > 0,
  \qquad
  \max_{\varrho}\ \frac{\varrho\,c'(\varrho)}{c(\varrho)} = 0.736 < 1,
\end{equation}
so the naive requirement $\na c\cdot(\x-\x_0)\le0$ fails badly, while
\ref{hyp:morawetz} holds with $\mu=0.264$. Figure~\ref{fig:sharpness}
illustrates the corresponding dichotomy: for $\ve=0.4$ the Herglotz
function $\varrho\mapsto\varrho/c(\varrho)$ is increasing and every ray
escapes, whereas for $\ve=0.75$ it is not, \ref{hyp:morawetz} fails, and
a bicharacteristic launched tangentially at the critical radius remains
trapped in an annulus. This confirms numerically that
\ref{hyp:morawetz} is sharp, as asserted in
Remark~\ref{rmk:sharpness}.

\begin{figure}[htbp]
  \centering
  \includegraphics[width=\textwidth]{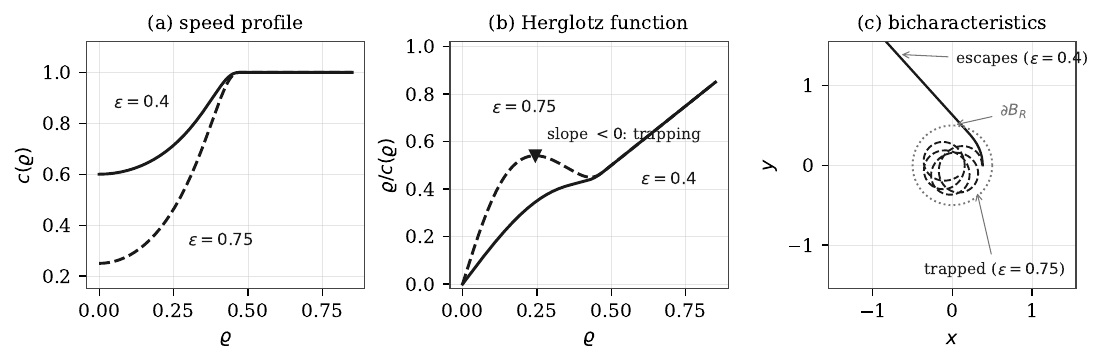}
  \caption{Sharpness of the non-trapping
  condition~\ref{hyp:morawetz} for the medium~\eqref{eq:test_c}.
  (a) Speed profiles. (b) The Herglotz function
  $\varrho/c(\varrho)$: increasing for $\ve=0.4$, non-monotone for
  $\ve=0.75$. (c) Bicharacteristics of $c\,|\xi|$ launched tangentially
  inside $B_R$: the ray escapes when~\ref{hyp:morawetz} holds and is
  trapped when it fails.}
  \label{fig:sharpness}
\end{figure}

The source is a fixed $C^\infty$ bump supported in $B_R$. The truncated
problem~\eqref{eq:PML_truncated_problem} is discretised by the
conservative five-point finite difference scheme associated with the
divergence form of the PML operator, on a uniform grid of mesh size
$h=1/100$, with homogeneous Dirichlet conditions on $\p\Omega_\delta$;
the resulting complex sparse system is solved directly. At $\omega=12$
and $c_{\text{min}}=0.6$ this is about $26$ points per shortest
wavelength, so the discretisation error is far below the PML errors
reported here. Since $u$ is not available in closed form for a variable
speed, we use as reference the solution computed with a very thick
layer, $\delta_{\text{ref}}=1.6$; the quantity tabulated is the relative
$L^2(\Omega)$ error against that reference, which differs from
$\|u_\delta-u\|_{L^2(\Omega)}/\|u\|_{L^2(\Omega)}$ by a term of the
order of the reference error itself, several orders of magnitude smaller
than the entries shown.

\subsection{Decay in the layer thickness}

Table~\ref{tab:delta} and Figure~\ref{fig:convergence}(a) report the
error as a function of $\delta$ for two frequencies. The decay is
clean and geometric, with a constant ratio per increment of $\delta$,
as predicted by Corollary~\ref{cor:exp_rate}.

\begin{table}[htbp]
\centering
\caption{Relative $L^2(\Omega)$ error as a function of the layer
thickness $\delta$, for the medium~\eqref{eq:test_c} with $\ve=0.4$.}
\label{tab:delta}
\begin{tabular}{@{}lcccc@{}}
\toprule
& \multicolumn{2}{c}{$\omega=6$} & \multicolumn{2}{c}{$\omega=10$}\\
\cmidrule(lr){2-3}\cmidrule(lr){4-5}
$\delta$ & error & ratio & error & ratio\\
\midrule
$0.15$ & $4.192\times10^{-1}$ & --- & $2.002\times10^{-1}$ & ---\\
$0.25$ & $1.337\times10^{-1}$ & $0.319$ & $2.960\times10^{-2}$ & $0.148$\\
$0.35$ & $4.151\times10^{-2}$ & $0.310$ & $4.718\times10^{-3}$ & $0.159$\\
$0.45$ & $1.307\times10^{-2}$ & $0.315$ & $7.562\times10^{-4}$ & $0.160$\\
$0.55$ & $4.162\times10^{-3}$ & $0.318$ & $1.209\times10^{-4}$ & $0.160$\\
$0.65$ & $1.344\times10^{-3}$ & $0.323$ & $1.929\times10^{-5}$ & $0.159$\\
\midrule
\multicolumn{1}{@{}l}{observed rate} & \multicolumn{2}{c}{$-11.51$} & \multicolumn{2}{c}{$-18.45$}\\
\multicolumn{1}{@{}l}{rate from $\Theta(s)$, $s=0.9$} & \multicolumn{2}{c}{$-0.94$} & \multicolumn{2}{c}{$-1.57$}\\
\multicolumn{1}{@{}l}{rate from Corollary~\ref{cor:exp_rate}} & \multicolumn{2}{c}{$-0.27$} & \multicolumn{2}{c}{$-0.46$}\\
\bottomrule
\end{tabular}
\end{table}

Two features deserve comment. First, the observed rate scales almost
linearly with $\omega$: the ratio of the two fitted slopes is $1.60$,
against $\omega_2/\omega_1=1.67$. This is what
Corollary~\ref{cor:exp_rate} predicts, since $\Theta(s)$ does not depend
on $\omega$ and the exponent is $\omega\,\Theta(s)/c_\infty$; the small
discrepancy is attributable to the $\omega$-dependence of the prefactor.
Second, the guaranteed rates are rigorous lower bounds but
conservative ones. The exponent $\Theta(s)$ of
Theorem~\ref{thm:convergence}, evaluated directly, is smaller than the
observed rate by a factor of roughly $12$; the further-simplified
constant of Corollary~\ref{cor:exp_rate}, which trades $\Theta(s)$ for
a closed form in $\gamma\delta$, loses another factor of about $3.5$.
Both losses are expected: $\Theta(s)$ is a worst case over all
$\y\in\p B_R$ and all $\x$ in the outer part of the layer
simultaneously, and the denominator $\Lambda_\delta$
of~\eqref{eq:def_Theta} is a crude diameter bound obtained from
$|\rho|\le|\mathbf z|$. The theorem should therefore be read as
establishing the exponential mechanism and its dependence on
$\omega\gamma_M\delta_M$, not as a quantitatively tight prediction of
the constant; see Remark~\ref{rmk:rate_sharpness}.

\begin{figure}[htbp]
  \centering
  \includegraphics[width=\textwidth]{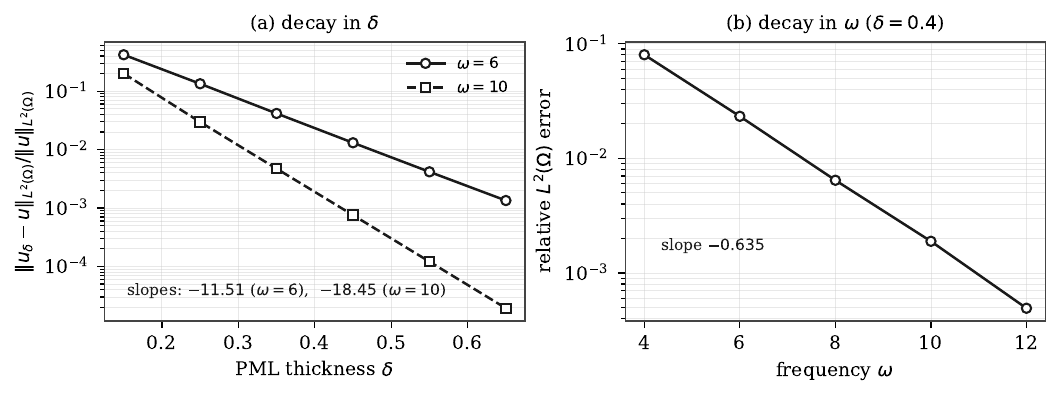}
  \caption{Exponential convergence of the truncated Cartesian PML
  solution for the variable-speed medium~\eqref{eq:test_c}.
  (a) Relative $L^2(\Omega)$ error against the layer thickness $\delta$,
  for $\omega=6$ and $\omega=10$; the slopes of the fitted lines are
  shown. (b) Error against the frequency $\omega$ at fixed
  $\delta=0.4$.}
  \label{fig:convergence}
\end{figure}

\subsection{Decay in the frequency}

Corollary~\ref{cor:exp_rate} also predicts exponential decay in
$\omega$ at fixed geometry, with rate $\Theta(s)/c_\infty$.
Table~\ref{tab:omega} and Figure~\ref{fig:convergence}(b) confirm this
for $\delta=0.4$: the error falls by more than two orders of magnitude
between $\omega=4$ and $\omega=12$, with fitted rate $-0.635$ against
the value $\Theta(0.9)/c_\infty=-0.060$ furnished by
Theorem~\ref{thm:convergence}. The high-frequency
behaviour is thus favourable, as it should be for a method whose
absorption is produced by the oscillation itself.

\begin{table}[htbp]
\centering
\caption{Relative $L^2(\Omega)$ error as a function of $\omega$ at
fixed $\delta=0.4$, for the medium~\eqref{eq:test_c} with $\ve=0.4$.}
\label{tab:omega}
\setlength{\tabcolsep}{4pt}\small
\begin{tabular}{@{}lccccc@{}}
\toprule
$\omega$ & $4$ & $6$ & $8$ & $10$ & $12$\\
\midrule
error & $8.017\times10^{-2}$ & $2.327\times10^{-2}$ &
$6.440\times10^{-3}$ & $1.889\times10^{-3}$ & $4.922\times10^{-4}$\\
\bottomrule
\end{tabular}
\end{table}

\subsection{The spectrum of the truncated operator}

Corollary~\ref{cor:every_omega} asserts that no frequency need be
excluded once $\delta_a,\delta_b\ge\delta_0(\omega)$. It is instructive
to see how $\sigma(T_\delta)$ behaves. We discretised $T_\delta$ with
the same scheme on $\Omega=(-1,1)^2$, $\gamma_a=\gamma_b=0.6$,
$\delta=0.8$, $\delta^{\mathrm t}_a=\delta^{\mathrm t}_b=0.3$, and computed the full spectrum for
three mesh sizes. The relevant quantity is $\min|\Arg\lambda|$, the
angular distance from the positive real axis, restricted to the
resolved part $|\lambda|<40$ of the spectrum, where the number of
eigenvalues is stable ($39$, $37$, $37$) across the three meshes:
\begin{center}
\begin{tabular}{@{}lccc@{}}
\toprule
$h$ & $1/8$ & $1/10$ & $1/12$\\
\midrule
$\min|\Arg\lambda|$ over $|\lambda|<40$ & $0.408$ & $0.405$ & $0.403$\\
\bottomrule
\end{tabular}
\end{center}
The minimum settles near $0.40$ and shows no tendency to degenerate
under refinement. Eigenvalues with $|\Arg\lambda|$ close to zero do
occur, but only at $|\lambda|\sim h^{-2}$, that is, in the unresolved
high-frequency tail of the discretisation; they migrate to larger
$|\lambda|$ as $h$ decreases and are grid artefacts. The experiment is
thus consistent with a threshold $\delta_0(\omega)$ that is mild in
practice, although, as noted in Remark~\ref{rmk:open}, we make no
quantitative claim about it.

\section{Conclusions}
\label{sec:conclusions}

We have analysed the Cartesian perfectly matched layer for the
two-dimensional Helmholtz equation with a variable propagation speed,
a setting not covered by the existing theory, which assumes a
homogeneous or piecewise homogeneous background.

Three ingredients carry the analysis. The first is structural: the
propagation speed enters the Cartesian PML sesquilinear form only
through the zeroth order coefficient $J=c^{-2}d_1d_2$, the principal
part being built from the stretching profiles alone. Every estimate
below therefore sees $c$ only through its two-sided bound, and the
theory applies to discontinuous speeds --- layered media, compact
inclusions --- with the physical problem itself handled by a
Lippmann--Schwinger equation
(Proposition~\ref{prop:physical}). The second is a non-trapping
condition obtained from the flat Morawetz multiplier, which turns out to
be sharp once the two terms of the derivative of the multiplier are kept
together: the controlling quantity is the scale-invariant ratio
$c^{-1}\na c\cdot(\x-\x_0)$, and the resulting
condition~\ref{hyp:morawetz} coincides in the radial case with the
Herglotz condition, whose failure produces a circular orbit. It is not
needed for well-posedness; its role is to identify the regime in which
the Cauchy data of the physical solution stay under control at high
frequency. The third is the analytic continuation of the outgoing solution along the
\emph{separable} Cartesian stretching, controlled by uniform Hankel
bounds on the closed first quadrant and by a lower bound on the
imaginary part of the complexified distance. Together they give
exponential convergence of the truncated PML solution in
$H^1(\Omega)$, at a rate governed by $\omega\gamma_M\delta_M$ and
unaffected by the inhomogeneity, since it is produced entirely in the
homogeneous exterior. The numerical experiments of
Section~\ref{sec:numerics} reproduce this behaviour on a medium with a
$40\%$ low-velocity inclusion.

A fourth ingredient makes the theory complete rather than almost
complete. The essential spectrum does not see a perturbation of the
speed supported in a compact set, once the operator is factored so that
the perturbation acts at zeroth order (Lemma~\ref{lem:reduction}). This
lets us import the spectral analysis of the homogeneous Cartesian layer
of~\cite{KimPasciak2010a} without redeveloping it for a symbol that has
no uniform directional limit at infinity, and, combined with
injectivity and a reflection argument, it removes both the exceptional
set of frequencies and the dependence of the stability constant on the
size of the computational domain.

What remains open is uniformity in $\omega$. Neither the threshold
$\delta_0(\omega)$ nor the constant $C_\star(\omega)$ is shown to be
bounded at high frequency. The frequency-explicit theory is by now
complete for a radial
layer~\cite{GalkowskiLafontaineSpence2023} and, for a Cartesian layer
with a constant speed, has been carried out by Wang and
Zheng~\cite{WangZheng2026}; what is missing is precisely the
combination treated here, a Cartesian layer over a variable and merely
bounded speed, for which the sharpest frequency-explicit statement we
are aware of gives $O(k^{-\infty})$ under a smoothness hypothesis
in~\cite{GalkowskiGongGrahamLafontaineSpence2024}. Replacing the
qualitative Fredholm argument used here by a quantitative limiting
absorption principle, along the lines of the direct kernel estimates
of~\cite{WangZheng2026} or of the frequency-explicit bounds for
bounded coefficients of~\cite{ChaumontFreletSpence2023}, appears to us
the natural next step.

\appendix
\renewcommand{\thethm}{A.\arabic{thm}}
\renewcommand{\theequation}{A.\arabic{equation}}
\setcounter{thm}{0}\setcounter{equation}{0}

\section{The extended-domain operator $\widetilde{L}$}
\label{app:Ltilde}

This appendix records elementary operator-theoretic properties of the
extended-domain PML operator $-c^2\widetilde{\Delta}$ on $\R^2$: that it
is well defined and has non-empty resolvent set, and that the
corresponding weak solutions gain $H^2$ regularity.
These facts are of independent interest and are not needed for the
well-posedness results of Proposition~\ref{prop:physical} and of
Section~\ref{sec:truncated}, which are established directly for the
physical operator $-c^2\Delta$ and for the truncated operator
$T_\delta$, respectively. We include them because the $H^2$ estimate of
Proposition~\ref{prop:resolvent1}, obtained by difference quotients
adapted to the complex stretching, does not seem to be recorded
elsewhere for a variable-speed Cartesian layer, and because the argument
is arranged so as to require no regularity of $c$.

The construction of $\widetilde L$ below is the one place in this paper
where more than~\ref{hyp:positive} is required of the propagation speed:
it proceeds through the substitution $\varphi=\bar Jv$, which calls for
$J$ and $J^{-1}$ to be multipliers of $H^1(\R^2)$, hence for
$c\in W^{1,\infty}(\R^2)$. \emph{Throughout this appendix, and only
here, we therefore assume in addition that $c$ is Lipschitz.} The
regularity statement of Proposition~\ref{prop:resolvent1}, which is the
only result of the appendix used in the body of the paper, is
independent of this and holds under~\ref{hyp:support}
and~\ref{hyp:positive} alone.

The functions $d_1,d_2$ are smooth with bounded derivatives, and $c$ is Lipschitz with $c^{-2}$ bounded above and below; hence $J$ and $J^{-1}$ are bounded and Lipschitz, and the map $v \mapsto \bar{J}v$ is an isomorphism of $H^1(\mathbb{R}^2)$. Therefore, the problem \eqref{eq:PML_extended_problem} is equivalent to the following: given $f \in L^2(\mathbb{R}^2)$ and $w\in\C$, find $u \in H^1(\mathbb{R}^2)$ such that, for any $\varphi \in H^1(\mathbb{R}^2)$,
\begin{equation}\label{eq:weak_form1}
A_{\R^2}(u,\bar{J}^{-1}\varphi) - w(u,\varphi)_{\R^2} = (f,\varphi)_{\R^2}.
\end{equation}

With the aim of characterising values $w\in\C$ for which $\mathcal{A}_{\R^2,w}$ is a continuous and coercive sesquilinear form, an unbounded operator $\widetilde{L}$ is defined from $H^{-1}(\R^2)$ into $H^{-1}(\R^2)$ with domain $H^1(\R^2)$ by means of
 \begin{equation}
\langle\widetilde{L}u,\varphi\rangle = A_{\R^2}(u,\bar{J}^{-1}\varphi), \mbox{ for all $u$ and $\varphi$ in $H^1(\R^2)$}.
\end{equation}
As a first task we will show that $\widetilde{L}$ is well-defined and its resolvent set $\rho(\widetilde{L})$ is not empty. We recall that the resolvent set of the operator $\widetilde{L}$ is formed by those complex numbers $\lambda$ such that $\widetilde{L}-\lambda I$ is one-to-one and $(\widetilde{L}-\lambda I)^{-1}$ is bounded with dense domain in $H^{-1}(\R^2)$. Here $I$ is the canonical inclusion from $H^1(\R^2)$ into $H^{-1}(\R^2)$.

\begin{lemma}\label{lem:Ldefined}
	The operator $\widetilde{L}:H^1(\R^2)\subset H^{-1}(\R^2)\rightarrow H^{-1}(\R^2)$, defined by $\langle \widetilde{L}u,\varphi\rangle =A_{\R^2}(u,\bar{J}^{-1}\varphi)$, is well-defined.
\end{lemma}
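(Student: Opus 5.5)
To show that $\widetilde L$ is well defined, I have to check two things for every $u\in H^1(\R^2)$. First, the map $\varphi\mapsto A_{\R^2}(u,\bar J^{-1}\varphi)$ is a bounded antilinear functional on $H^1(\R^2)$, so that it defines an element $\widetilde Lu\in H^{-1}(\R^2)$. Second, the assignment $u\mapsto\widetilde Lu$ is linear. Linearity is immediate from the sesquilinearity of $A_{\R^2}$. Antilinearity in $\varphi$ follows because $\varphi\mapsto\bar J^{-1}\varphi$ is complex linear and $A_{\R^2}$ is antilinear in its second slot. The substance is therefore the boundedness estimate.

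\textbf{Step 1: $\bar J^{-1}$ is a multiplier of $H^1(\R^2)$.} I write $J^{-1}=c^2\,s_1(x)s_2(y)$. On the boundedness side, Lemma~\ref{lemma:bounds1}(2) and the lower bound $|d_j|\ge1$ give $|J^{-1}|\le c_{\max}^2$. On the smoothness side, $s_j=1/d_j$ are $C^\infty$ with bounded derivatives, since $\sigma_j$ is smooth, constant outside a compact set, and $|d_j|\ge1$. Together with the standing assumption of this appendix that $c$ is Lipschitz, $c^2$ lies in $W^{1,\infty}$, because $c$ is bounded, so $J^{-1}\in W^{1,\infty}(\R^2)$. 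The product rule in $H^1$ for a $W^{1,\infty}$ multiplier then gives
\[
\|\bar J^{-1}\varphi\|_{H^1}\le C_J\|\varphi\|_{H^1},
\]
with $C_J$ depending on $\|J^{-1}\|_{W^{1,\infty}}$.

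\textbf{Step 2: boundedness of $A_{\R^2}$ on $H^1\times H^1$.} By Lemma~\ref{lemma:bounds1}(1) the coefficients $d_2/d_1$ and $d_1/d_2$ are bounded by $\sqrt{\sigma_M^2+1}$. Hölder's inequality, exactly as in the continuity part of Lemma~\ref{lm:well-possed}, then gives
\[
|A_{\R^2}(u,v)|\le 2\sqrt{\sigma_M^2+1}\,\|u\|_{H^1}\|v\|_{H^1}.
\]

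\textbf{Step 3: combine.} Taking $v=\bar J^{-1}\varphi$ in Step 2 and using Step 1,
\[
|\langle\widetilde Lu,\varphi\rangle|\le 2\sqrt{\sigma_M^2+1}\,C_J\,\|u\|_{H^1}\|\varphi\|_{H^1}.
\]
Hence $\widetilde Lu\in H^{-1}(\R^2)$ with $\|\widetilde Lu\|_{H^{-1}}\le C\|u\|_{H^1}$. Since $H^1$ is dense in $H^{-1}$, the domain is dense, as the definition of an unbounded operator on $H^{-1}$ with domain $H^1$ requires.

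The only step needing care is the multiplier property in Step 1. Its main point is the derivative of $c^{-2}$, which requires $\nabla c\in L^\infty$; this is exactly why the Lipschitz assumption is imposed in this appendix. All other ingredients come from the pointwise bounds of Lemma~\ref{lemma:bounds1}.
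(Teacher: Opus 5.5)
Your proposal is correct and follows the paper's proof. You bound $A_{\R^2}$ using Lemma~\ref{lemma:bounds1}(1), then use that multiplication by $\bar J^{-1}$ is bounded on $H^1(\R^2)$, which is where the Lipschitz assumption on $c$ enters. The one difference is that you verify this multiplier property explicitly via $J^{-1}=c^2s_1s_2\in W^{1,\infty}(\R^2)$, whereas the paper simply invokes the isomorphism $v\mapsto\bar J v$ of $H^1(\R^2)$ asserted at the start of the appendix.
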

\begin{proof}
	Let $u$ and $\varphi$ be in $H^1(\R^2)$. From Lemma~\ref{lemma:bounds1} it follows that there is a positive constant $C=C(\sigma_M,w)$ such that
	\begin{equation}
 \begin{split}
   |\langle\widetilde{L}u,\varphi\rangle|&=|A_{\R^2}(u,\bar{J}^{-1}\varphi)|\\
   &\leq C\|u\|_{H^1}\left\|\bar{J}^{-1}\varphi\right\|_{H^1}.
 \end{split}
	\end{equation}
	Now, $v\mapsto {\bar{J}}^{-1}v$ on $H^{1}(\R^2)$ is an isomorphism, thus there is a positive constant $C_1$ such that $\left\|\bar{J}^{-1}\varphi\right\|_{H^1}\leq C_1\|\varphi\|_{H^1}$, then
     \begin{equation}
         |\langle\widetilde{L}u,\varphi\rangle|\leq CC_1\|u\|_{H^1}\|\varphi\|_{H^1},
     \end{equation} and
	\begin{equation}
	\sup_{\|\varphi\|_{H^1}\neq 0}\frac{|\langle\widetilde{L}u,\varphi\rangle|}{\|\varphi\|_{H^1}}\leq CC_1\|u\|_{H^1}.
	\end{equation}
	Hence $\widetilde{L}u\in H^{-1}(\R^2)$.
\end{proof}

\begin{pro}\label{prop:resolventL}
    The resolvent set $\rho(\widetilde{L})$ is not empty.
\end{pro}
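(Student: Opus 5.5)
The plan is to exhibit an explicit point of the resolvent set, namely $\lambda=w_0=-e^{-\ri\theta}$ with $\theta$ as in Lemma~\ref{lm:well-possed}. The key observation is that the equation $(\widetilde L-w_0 I)u=F$ in $H^{-1}(\R^2)$ is, after the substitution $v=\bar J^{-1}\varphi$, the same problem as the extended problem~\eqref{eq:PML_extended_problem}. Indeed, for $u,\varphi\in H^1(\R^2)$,
\begin{equation}
\langle(\widetilde L-w_0I)u,\varphi\rangle=A_{\R^2}(u,\bar J^{-1}\varphi)-w_0(u,\varphi)_{\R^2}
=\mathcal A_{\R^2,w_0}(u,\bar J^{-1}\varphi),
\end{equation}
since $(u,\varphi)=(u,\bar J\,\bar J^{-1}\varphi)=(Ju,\bar J^{-1}\varphi)$.

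Because $v\mapsto\bar J^{-1}v$ is an isomorphism of $H^1(\R^2)$ under the Lipschitz assumption of this appendix, I will define the transported functional $\tilde F(v):=\langle F,\bar J v\rangle$. This lies in $H^{-1}$ with $\|\tilde F\|\le C\|F\|_{H^{-1}}$. The equation $(\widetilde L-w_0I)u=F$ is then equivalent to $\mathcal A_{\R^2,w_0}(u,v)=\tilde F(v)$ for all $v\in H^1$.

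By the proof of Lemma~\ref{lm:well-possed}, which was stated for arbitrary right-hand sides in $H^{-1}$, this variational problem has a unique solution. That solution satisfies
\begin{equation}
\|u\|_{H^1}\le C_2^{-1}\|\tilde F\|_{H^{-1}}\le C\|F\|_{H^{-1}}.
\end{equation}
Hence $\widetilde L-w_0I:H^1\to H^{-1}$ is a bijection. Its inverse is bounded from $H^{-1}$ into $H^1$, and a fortiori into $H^{-1}$, since the inclusion $I$ is continuous. The domain of the inverse is all of $H^{-1}(\R^2)$, which is in particular dense. So $w_0\in\rho(\widetilde L)$.

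The only delicate step is the claim that multiplication by $\bar J$ and by $\bar J^{-1}$ is bounded on $H^1$. This is what makes $\tilde F$ a bounded functional and $\widetilde L$ well defined, as in Lemma~\ref{lem:Ldefined}. I would verify it directly. $J$ is bounded above and below by Lemma~\ref{lemma:bounds1}(2), with the lower bound $c_{\max}^{-2}$. Moreover $\nabla J=-2c^{-3}(\nabla c)\,d_1d_2+c^{-2}\nabla(d_1d_2)$ lies in $L^\infty$, because $c$ is Lipschitz and $d_j'=\ri\sigma_j'$ is bounded. The product rule then gives $\|Jv\|_{H^1}\le C\|v\|_{H^1}$, and the same argument applies to $J^{-1}$, since $\nabla J^{-1}=-J^{-2}\nabla J$.
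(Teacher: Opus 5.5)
Your proof is correct and follows the paper's route: take $w_0=-e^{-\ri\theta}$, observe that $(\widetilde L-w_0I)u=F$ is the extended problem tested against $v=\bar J^{-1}\varphi$, and conclude with the Lax--Milgram argument of Lemma~\ref{lm:well-possed}. Your transported functional $\tilde F(v)=\langle F,\bar Jv\rangle$ is in fact slightly more careful than the paper, which states the bound as $\|u\|_{H^1}\le C_2^{-1}\|F\|_{H^{-1}}$ and leaves out the $H^1$-multiplier norm of $\bar J$ that this substitution introduces; the omission does not affect the conclusion.
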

\begin{proof}
A complex number $w_0\in\rho(\widetilde{L})$ if, for any $F\in H^{-1}(\R^2)$,
  there exists a unique $u\in H^1(\R^2)$ such that $(\widetilde{L}-w_0 I)u=F$,
  and there exists a positive constant $C$, independent of $F$, such that
  \begin{equation}\label{eq:boundLinv}
    \|u\|_{H^{1}} \leq C\|F\|_{H^{-1}}.
  \end{equation}
  We take $w_0=-e^{-\ri\theta}$. By Lemma~\ref{lm:well-possed}, for any
  $F\in H^{-1}(\R^2)$ there exists a unique $u\in H^1(\R^2)$ satisfying
  \begin{equation}
    A_{\R^2}(u,\bar{J}^{-1}\varphi) - w_0(u,\varphi)_{\R^2} = \langle F,\varphi
    \rangle
    \qquad \forall\,\varphi\in H^1(\R^2),
  \end{equation}
  together with the estimate $\|u\|_{H^1}\leq C_2^{-1}\|F\|_{H^{-1}}$,
  which gives~\eqref{eq:boundLinv} and shows that $w_0\in\rho(\widetilde{L})$.
\end{proof}

Now, we consider the operator $-c^2\widetilde{\Delta}:H^2(\R^2)\subset L^2(\R^2)\rightarrow L^2(\R^2)$. The following proposition is a first step towards characterising the spectrum of $\widetilde{L}$ and $-c^2\widetilde{\Delta}$.

\begin{pro}\label{prop:resolvent1}
	 The resolvent set $\rho(-c^2\widetilde{\Delta})$ is not empty.
	\end{pro}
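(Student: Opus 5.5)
The plan is to show that $w_0=-e^{-\ri\theta}$, the point furnished by Lemma~\ref{lm:well-possed}, lies in $\rho(-c^2\widetilde\Delta)$ for the operator with domain $H^2(\R^2)$. We need two things. The first is that for every $f\in L^2(\R^2)$ there is a unique $u\in H^2(\R^2)$ with $-c^2\widetilde\Delta u-w_0u=f$. The second is an estimate $\|u\|_{L^2}\le C\|f\|_{L^2}$. We work with the weak formulation~\eqref{eq:PML_extended_problem}, which is equivalent to the strong equation once $u\in H^2$: multiply by $J=c^{-2}d_1d_2$ and integrate by parts. This is the formulation that needs no regularity of $c$, since $c$ appears only in the zeroth order coefficient $J$.

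\emph{Existence and uniqueness.} Existence and uniqueness of a weak solution are immediate. By Lemma~\ref{lm:well-possed} and Lax--Milgram there is a unique $u\in H^1(\R^2)$ with $\mathcal A_{\R^2,w_0}(u,v)=(Jf,v)$ for all $v\in H^1(\R^2)$. Lemma~\ref{lemma:bounds1}(2) gives $\|u\|_{H^1}\le C\|f\|_{L^2}$. Uniqueness in $H^2$ follows because any $H^2$ solution is a weak solution.

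\emph{$H^2$ regularity.} This is the main step and the expected obstacle. Rewrite the weak equation as
\[
\int_{\R^2}\Bigl(\tfrac{d_2}{d_1}\p_xu\,\p_x\bar v+\tfrac{d_1}{d_2}\p_yu\,\p_y\bar v\Bigr)=(g,v),\qquad g:=J(f+w_0u)\in L^2,
\]
with $\|g\|_{L^2}\le C\|f\|_{L^2}$. The principal coefficients $a_{11}=d_2/d_1$ and $a_{22}=d_1/d_2$ are $C^\infty$ with bounded derivatives and depend only on $\sigma_1,\sigma_2$. By Lemma~\ref{lemma:bounds1}(3) they satisfy $\Re a_{jj}\ge(1+\sigma_M^2)^{-1}$.

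We run the Nirenberg difference-quotient argument on the whole plane, which has no boundary and needs no cut-off. Test with $v=-D_k^{-h}D_k^hu$ for $k=1,2$, and use the discrete product rule $D_k^h(a\,\p u)=a(\cdot+he_k)D_k^h\p u+(D_k^ha)\p u$. Taking real parts and using the coercivity of $\Re a_{jj}$ gives
\[
\|\na D_k^hu\|_{L^2}^2\le C\bigl(\|\na u\|_{L^2}\|\na D_k^hu\|_{L^2}+\|g\|_{L^2}\|D_k^{-h}D_k^hu\|_{L^2}\bigr).
\]
Here the constant $C$ involves $\|\na a_{jj}\|_{L^\infty}$. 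Next use $\|D_k^{-h}w\|_{L^2}\le\|\p_kw\|_{L^2}$ with $w=D_k^hu$, then Young's inequality. This yields $\|\na D_k^hu\|_{L^2}\le C(\|u\|_{H^1}+\|g\|_{L^2})$ uniformly in $h$. Hence $\na u\in H^1$, so $u\in H^2(\R^2)$ with $\|u\|_{H^2}\le C\|f\|_{L^2}$.

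The delicate point is that the form is not Hermitian, since the coefficients are complex. Coercivity must therefore come from real parts term by term. The diagonal, separable structure makes this work, because each term $\Re a_{jj}|\p_jD_k^hu|^2$ is bounded below separately. We also note that $c$ never gets differentiated: it sits inside $g$, which only needs to lie in $L^2$.

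\emph{Conclusion.} For $u\in H^2$, integrating by parts shows the strong equation $-\widetilde\Delta u-w_0c^{-2}u=c^{-2}f$ holds a.e. Multiplying by $c^2$ gives $(-c^2\widetilde\Delta-w_0)u=f$. Thus $-c^2\widetilde\Delta-w_0I:H^2\to L^2$ is bijective, with inverse bounded $L^2\to H^2\subset L^2$. So $w_0\in\rho(-c^2\widetilde\Delta)$, and the resolvent set is not empty.
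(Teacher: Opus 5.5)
Your proposal is correct and follows essentially the paper's route: Lax--Milgram at $w_0=-e^{-\ri\theta}$, then the $J$-term moved to an $L^2$ right-hand side so that $c$ is never differenced, then a whole-plane difference-quotient argument with test function $D_k^{-h}D_k^hu$, which exploits that $d_2/d_1$ and $d_1/d_2$ depend on $\sigma_1,\sigma_2$ alone. The only difference is cosmetic: the paper adds a constant shift $\lambda(u,v)$ to get full $H^1$-coercivity of the shifted form, whereas you use gradient-only coercivity together with $\|D_k^{-h}w\|_{L^2}\le\|\p_k w\|_{L^2}$, which works equally well.
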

\begin{proof}
  To prove that $w_0\in\rho(-c^2\widetilde{\Delta})$ as well, we let
  $f\in L^2(\R^2)$ and let $u\in H^1(\R^2)$ be the unique solution of
  \begin{equation}\label{eq:weak_f}
    \mathcal{A}_{\R^2,w_0}(u,v) = (J f,v)_{\R^2}
    \qquad \forall\,v\in H^1(\R^2),
  \end{equation}
  whose existence, uniqueness, and the bound
  \begin{equation}\label{eq:bound_u_les_f}
      \|u\|_{H^1(\R^2)}\leq C\|f\|_{L^2(\R^2)}
  \end{equation}
  for some constant $C>0$, are guaranteed by Lemma~\ref{lm:well-possed}.
  We shall prove that $u\in H^2(\R^2)$ actually, and that
  $(-c^2\widetilde{\Delta}-w_0 I)u=f$ in $L^2(\R^2)$ with $\|u\|_{H^2}\leq C\|f\|_{L^2}$ for some constant $C>0$.

  The propagation speed is only assumed bounded, so the zeroth-order
  coefficient $J=c^{-2}d_1d_2$ need not be differentiable and difference
  quotients cannot be applied to it. We therefore move that term to the
  right-hand side before differentiating. Fix a real constant
  $\lambda>0$ and set
  \begin{equation}\label{eq:Blambda}
    \mathcal{B}_\lambda(u,v)=A_{\R^2}(u,v)+\lambda(u,v)_{\R^2},
    \qquad
    g = Jf + w_0Ju+\lambda u .
  \end{equation}
  By Lemma~\ref{lemma:bounds1}(2) and~\eqref{eq:bound_u_les_f},
  $g\in L^2(\R^2)$ with
  \begin{equation}\label{eq:bound_g}
    \|g\|_{L^2(\R^2)}\le
    \Bigl(\|J\|_{L^\infty}+\bigl(|w_0|\,\|J\|_{L^\infty}+\lambda\bigr)C\Bigr)
    \|f\|_{L^2(\R^2)},
  \end{equation}
  and~\eqref{eq:weak_f} is equivalent to
  \begin{equation}\label{eq:weak_g}
    \mathcal{B}_\lambda(u,v)=(g,v)_{\R^2}\qquad\forall\,v\in H^1(\R^2).
  \end{equation}
  Moreover, by Lemma~\ref{lemma:bounds1}(3),
  \begin{equation}\label{eq:coercivity_B}
  \begin{split}
    \Re\,\mathcal{B}_\lambda(v,v)&\;\ge\;
    \frac{1}{\sigma_M^2+1}\|\na v\|_{L^2}^2+\lambda\|v\|^2_{L^2}
    \;\ge\;C_2'\|v\|^2_{H^1(\R^2)},\\
    C_2'&=\min\Bigl\{\tfrac{1}{\sigma_M^2+1},\lambda\Bigr\},
  \end{split}
  \end{equation}
  for every $v\in H^1(\R^2)$. The coefficients of $\mathcal{B}_\lambda$
  are $d_2/d_1$, $d_1/d_2$ and the \emph{constant} $\lambda$; by
  Lemma~\ref{lemma:bounds1} the first two belong to $C^\infty(\R^2)$
  with bounded derivatives, because they depend on $\sigma_1,\sigma_2$
  alone and not on $c$. This is what makes the argument below
  independent of the regularity of the propagation speed.

  For $h\neq 0$ and $k=1,2$ define the difference quotients
  \begin{equation*}
    D_k^h u(x,y)=\frac{u((x,y)+h\mathbf{e}_k)-u(x,y)}{h}, \text{ with } \mathbf{e}_1=(1,0) \text{ and } \mathbf{e}_2=(0,1).
  \end{equation*}
  Choose the test function
  \begin{equation}\label{eq:test}
    v = D_k^{-h}(D_k^h u) \;\in\; H^1(\R^2),
  \end{equation}
  where $D_k^{-h}$ denotes the backward difference quotient. For each fixed
  $h\neq0$, $D_k^h u\in H^1(\R^2)$ as well, since translation is an isometry
  of $H^1(\R^2)$.

  For the left-hand side we examine each term of
  $\mathcal{B}_\lambda(u,D_k^{-h}(D_k^hu))$. Recall that
  \begin{equation}\label{eq:formA}
    \mathcal{B}_\lambda(u,v)
    = \int_{\R^2}\!\left(
        \frac{d_2}{d_1}\,\p_x  u\,\overline{\p_x  v}
       +\frac{d_1}{d_2}\,\p_y  u\,\overline{\p_y  v}
       + \lambda\,u\overline v
      \right)dxdy.
  \end{equation}
  The operators of partial differentiation commute with $D_k^h$. Applying
  the discrete integration-by-parts formula and Leibniz's rule for $D_k^h$
  to each term in $\mathcal{B}_\lambda(u,D_k^{-h}D_k^hu)$ gives
\begin{equation}\label{eq:first_term_A}
\begin{split}
     \int_{\R^2}\!
        \frac{d_2}{d_1}\,\p_x  u\,\overline{\p_x   D_k^{-h} D_k^h u}dxdy & = \int_{\R^2}\!
        \frac{d_2}{d_1}\,\p_x  u\,  D_k^{-h}\overline{\p_x  D_k^h  u}dxdy\\
        & = -\int_{\R^2}\! D_k^h  \left(\frac{d_2}{d_1}\,\p_x  u\right)
       \, \overline{\p_x  D_k^h  u}dxdy\\
       & = -\int_{\R^2}\!  \left.\frac{d_2}{d_1}\right|_{(x,y)+h\e_k}
       \,\left|  \p_x  D_k^hu\right|^2dxdy\\
       &\qquad - \int_{\R^2}\!D_k^h\left(  \frac{d_2}{d_1}\right)\,\p_x  u
       \,  \p_x  D_k^h u\,dxdy
\end{split}
\end{equation}
\begin{equation}\label{eq:second_term_A}
\begin{split}
     \int_{\R^2}\!
        \frac{d_1}{d_2}\,\p_y  u\,\overline{\p_y   D_k^{-h} D_k^h u}dxdy
       &= -\int_{\R^2}\!  \left.\frac{d_1}{d_2}\right|_{(x,y)+h\e_k}
       \,\left|  \p_y  D_k^hu\right|^2dxdy\\
       &\qquad - \int_{\R^2}\!D_k^h\left(  \frac{d_1}{d_2}\right)\,\p_y  u
       \,  \p_y  D_k^h u\,dxdy
\end{split}
\end{equation}
\begin{equation}\label{eq:third_term_A}
     \int_{\R^2}\!
        \lambda\, u\,\overline{  D_k^{-h} D_k^h u}dxdy
       = -\int_{\R^2}\!  \lambda
       \,\left|   D_k^hu\right|^2dxdy,
\end{equation}
the commutator contribution being absent in the last identity because
$\lambda$ is constant. This is the single point at which the
reformulation~\eqref{eq:weak_g} is used.
From \eqref{eq:first_term_A}, \eqref{eq:second_term_A} and \eqref{eq:third_term_A}, we get
\begin{equation}\label{eq:A_decompos}
    \mathcal{B}_\lambda(u,D_k^{-h}D_k^hu)=-\mathcal{B}_\lambda^{h,k}(D_k^{h}u,D_k^hu)-\mathcal{C}^{h,k}(u,D_k^hu),
\end{equation}
where
\begin{equation}\label{eq:A_shift}
\begin{split}
\mathcal{B}_\lambda^{h,k}(D_k^{h}u,D_k^hu)=\int_{\R^2}\Bigl(& \left.\frac{d_2}{d_1}\right|_{(x,y)+h\e_k}
       \,\left|  \p_x  D_k^hu\right|^2+\left.\frac{d_1}{d_2}\right|_{(x,y)+h\e_k}
       \,\left|  \p_y  D_k^hu\right|^2\\
       &+\lambda
       \,\left|   D_k^hu\right|^2\Bigr)dxdy
\end{split}
\end{equation}
\begin{equation}\label{eq:C_form}
\begin{split}
\mathcal{C}^{h,k}(u,D_k^hu)=\int_{\R^2}\!\Bigl(&D_k^h\left(  \frac{d_2}{d_1}\right)\,\p_x  u
       \,  \p_x  D_k^h u+D_k^h\left(  \frac{d_1}{d_2}\right)\,\p_y  u
       \,  \p_y  D_k^h u\Bigr)\,dxdy .
\end{split}
\end{equation}
We note that $\mathcal{B}_\lambda^{h,k}$ is exactly the same sesquilinear form
  as $\mathcal{B}_\lambda$ but with coefficients shifted by $h\e_k$; since
  Lemma~\ref{lemma:bounds1}(3) holds at every point,
  \eqref{eq:coercivity_B} holds uniformly for
  $\mathcal{B}_\lambda^{h,k}$ and hence,
  \begin{equation}\label{eq:coercivity_Ahk}
       \Re\, \mathcal{B}_\lambda^{h,k}(v,v) \ge C_2'\|v\|_{H^1}^2
    \qquad\forall\,v\in H^1(\R^2).
  \end{equation}

  Testing~\eqref{eq:weak_g} with $v=D_k^{-h}(D_k^hu)$ and using
  \eqref{eq:A_decompos} directly (without transforming the right-hand
  side), we obtain
  \begin{equation}\label{eq:Ahk_eq}
    \mathcal{B}_\lambda^{h,k}(D_k^hu,D_k^hu)
    = -\bigl(g,\,D_k^{-h}D_k^hu\bigr)_{\R^2} - \mathcal{C}^{h,k}(u,D_k^hu).
  \end{equation}
  Thus, by \eqref{eq:coercivity_Ahk},
  \begin{equation}
  \begin{split}\label{eq:bounds_C_hk}
    C_2'\|D^h_ku\|_{H^1(\R^2)}^2
    &\leq \Re\,\mathcal{B}_\lambda^{h,k}(D_k^hu,D_k^hu)\\
    &\leq \bigl|\bigl(g,\,D_k^{-h}D_k^hu\bigr)_{\R^2}\bigr| + |\mathcal{C}^{h,k}(u,D_k^hu)|.
  \end{split}
  \end{equation}
  For the first term on the right-hand side, since $D_k^hu\in H^1(\R^2)$
  for each fixed $h$, \cite[Lemma~9.48]{renardy} gives
  $\|D_k^{-h}(D_k^hu)\|_{L^2(\R^2)}\leq\|D_k^hu\|_{H^1(\R^2)}$, so by
  Cauchy--Schwarz and~\eqref{eq:bound_g},
  \begin{equation}\label{eq:bound_source}
     \bigl|\bigl(g,\,D_k^{-h}D_k^hu\bigr)_{\R^2}\bigr|
     \leq \|g\|_{L^2(\R^2)}\|D_k^hu\|_{H^1(\R^2)}
     \leq C_g\|f\|_{L^2(\R^2)}\|D_k^hu\|_{H^1(\R^2)},
  \end{equation}
  with $C_g$ the constant in~\eqref{eq:bound_g}.
  For $\mathcal{C}^{h,k}$, we note that the coefficients $d_2/d_1$ and
  $d_1/d_2$ are in $C^\infty(\R^2)$ with bounded derivatives --- they are
  built from $\sigma_1$ and $\sigma_2$ alone --- hence
  $|D_k^h(d_2/d_1)|$ and $|D_k^h(d_1/d_2)|$ are bounded
  uniformly in $h$ by some positive constant $\beta$. Thus, with the
  Cauchy--Schwarz inequality and \eqref{eq:bound_u_les_f} we obtain
  \begin{equation}\label{eq:bound_Chk_beta}
      \begin{split}
       |\mathcal{C}^{h,k}(u,D^h_ku)| & \leq  2\beta \|u\|_{H^1(\R^2)}\|D_k^hu\|_{H^1(\R^2)}\\
       &\leq 2\beta C\|f\|_{L^2(\R^2)}\|D_k^hu\|_{H^1(\R^2)}.\\
      \end{split}
  \end{equation}

From \eqref{eq:bounds_C_hk}, \eqref{eq:bound_source} and \eqref{eq:bound_Chk_beta} we have
\begin{equation}
    C_2'\|D^h_ku\|^2_{H^1(\R^2)}\leq (C_g+2\beta C)\|f\|_{L^2(\R^2)}\|D_k^hu\|_{H^1(\R^2)},
\end{equation}
thus,
\begin{equation}
   \|D^h_ku\|_{H^1(\R^2)} \leq \tilde{C}\|f\|_{L^2(\R^2)}.
\end{equation}
Finally, the inequality
\begin{equation}
    \begin{split}
        \left\|D_k^h\p_{x_k} u\right\|_{L^2(\R^2)}=\left\|\p_{x_k} D_k^hu\right\|_{L^2(\R^2)}\leq \|D^h_ku\|_{H^1(\R^2)}\leq \tilde C\|f\|_{L^2(\R^2)}
    \end{split}
\end{equation}
and \cite[Lemma~9.49]{renardy} imply that $\left\|\p_{x_k}^2 u\right\|_{L^2(\R^2)}\leq \tilde C\|f\|_{L^2(\R^2)}$. In this way, we can prove that the other second-order partial weak derivatives satisfy the same estimate, hence $u\in H^2(\R^2)$ and $\|u\|_{H^2(\R^2)}\leq C\|f\|_{L^2(\R^2)}$ for some positive constant $C$. We have shown that for every $f\in L^2(\R^2)$ the unique weak solution $u\in H^1(\R^2)$ of $\mathcal{A}_{\R^2,w_0}(u,v)=(Jf,v)_{\R^2}$ actually belongs to $H^2(\R^2)$ and satisfies $\|u\|_{H^2}\le C\|f\|_{L^2}$ for some constant $C>0$ independent of $f$. Moreover, testing the weak formulation against $v\in C_c^\infty(\R^2)$ and integrating by parts yields the strong equation
\[
-c^2\widetilde{\Delta}u - w_0 u = f \qquad \text{in } L^2(\R^2).
\]
Hence the operator $-c^2\widetilde{\Delta}-w_0 I$ maps $H^2(\R^2)$ onto $L^2(\R^2)$ and is bounded below: $\|u\|_{H^2}\le C\|(-c^2\widetilde{\Delta}-w_0 I)u\|_{L^2}$. Consequently, $-c^2\widetilde{\Delta}-w_0 I$ is a bijection from $H^2(\R^2)$ onto $L^2(\R^2)$ with bounded inverse, which means that $w_0$ belongs to the resolvent set of $-c^2\widetilde{\Delta}$. Thus $\rho(-c^2\widetilde{\Delta})\neq\emptyset$.
\end{proof}

\bibliographystyle{elsarticle-num}
\bibliography{references}

\end{document}